\documentclass[a4paper,11pt]{article}
\usepackage[latin1]{inputenc}
\usepackage{amsmath}
\usepackage{amsfonts}
\usepackage{amssymb}
\usepackage{amscd}
\usepackage{dsfont}
\usepackage{mathrsfs}
\usepackage{graphicx}
\usepackage{epstopdf}
\usepackage{setspace}
\usepackage{fancyhdr}
\usepackage{amsthm}
\usepackage{empheq}
\usepackage{cases}
\usepackage[all]{xy}

\pagestyle{headings}
\newtheorem*{theorem*}{Theorem}
\newtheorem*{proposition*}{Proposition}
\newtheorem{theorem}{Theorem}[section]
\newtheorem{definition}{Definition}[section]

\newtheorem{proposition}[theorem]{Proposition}

\newtheorem{remark}[theorem]{Remark}


\fancyhf{} 


\newcommand{\erre}{\mathds{R}}

\newcommand{\essen}{\mathds{S}^n}
\newcommand{\erren}{\mathds{R}^n}

\newcommand{\esse}{\mathds{S}}
\newcommand{\E}{\mathds{E}}

\newcommand{\mob}{\mathrm{M\ddot{o}b}}
\newcommand{\mab}{\mathfrak{m\ddot{o}b}}
\newcommand{\di}{\mathrm{d}} 

\newcommand{\ra}{\rightarrow}

\newcommand{\set}[1]{{\left\{#1\right\}}}               
\newcommand{\pa}[1]{{\left(#1\right)}}                  
\newcommand{\sq}[1]{{\left[#1\right]}}                  
\newcommand{\abs}[1]{{\left|#1\right|}}                 
\newcommand{\pair}[1]{\left\langle#1\right\rangle}      

\newcommand{\eps}{\varepsilon}                           

\newcommand{\metric}{\pair{\;,}}                          

\renewcommand{\hat}[1]{\widehat{#1}}
\renewcommand{\tilde}[1]{\widetilde{#1}}




%
%

\begin{document}

\author{Marco Magliaro \and Luciano Mari \and Marco Rigoli}
\title{\textbf{On the geometry of curves and conformal geodesics in the M\"obius
space}}
\date{}
\maketitle
\scriptsize \begin{center} Dipartimento di Matematica,
Universit\`a
degli studi di Milano,\\
Via Saldini 50, I-20133 Milano (Italy)\\
E-mail addresses: \\
marco.magliaro@unimi.it, \ luciano.mari@unimi.it, \
marco.rigoli@unimi.it
\end{center}

\begin{abstract}
This paper deals with the study of some properties of immersed
curves in the conformal sphere $\mathds{Q}_n$, viewed as a
homogeneous space under the action of the M\"obius group. After an overview on
general well-known facts, we briefly focus on the links between
Euclidean and conformal curvatures, in the spirit of F. Klein's
Erlangen program. The core of the paper is the study of conformal
geodesics, defined as the critical points of the conformal
arclength functional. After writing down their Euler-Lagrange
equations for any $n$, we prove an interesting codimension
reduction, namely that every conformal geodesic in $\mathds{Q}_n$
lies, in fact, in a totally umbilical $4$-sphere
$\mathds{Q}_4$. We then extend and complete the work in
\cite{musso} by solving the Euler-Lagrange equations for the
curvatures and by providing an explicit expression even for those
conformal geodesics not included in any conformal
$3$-sphere.
\end{abstract}
 \maketitle

\normalsize

\section{Introduction}
The investigation of the conformal properties of submanifolds of
the unit $n$-dimensional sphere is a well-developed field in
differential geometry. Its deep links range, for example, from the
classical theory of curves and surfaces in $\erre^3$ to the theory
of integrable systems, general relativity and so on. Among the
various approaches that have been used in the study of the
subject, Cartan's method of the moving frame (see \cite{jensen},
\cite{sharpe} and the original books of E. Cartan,
\cite{cartan_gr}, \cite{cartan_rep}) stands out for its
usefulness, depth and generality. In particular, the theory of
homogeneous spaces, which encompasses the conformal geometry of
the sphere, gives a vast field where this technique applies at
best. In this paper, we use Cartan's method to deal with the
geometry of immersed curves. In particular, our main concern is to
complete the characterization of conformal geodesics $f: I\subset
\erre \ra \mathds{Q}_n$, begun with the work of E. Musso in
\cite{musso} in the case of $\mathds{Q}_3$. Such curves arise as
the stationary points of the integral of a conformally invariant
$1$-form, $\di s$, called the conformal arclength. Roughly
speaking, $\di s$ is the conformal analogue of the differential of
the Euclidean arclength, and is linked to it through some
classical formulas in \cite{liebmann}, \cite{sul}, \cite{takasu},
\cite{vessiot}.
\\
Using Griffiths' formalism (see \cite{gri}), in \cite{musso} the
author wrote the Euler-Lagrange equations for $n=3$,
obtaining the following system of ODEs for the conformal
curvatures $\mu_1,\mu_2\in C^\infty(I)$:
\begin{equation}\label{eultre1}
\left\{ \begin{array}{ll}
\dot{\mu}_1+ 3 \mu_2\dot{\mu}_2=0 \\[0.2cm]
\ddot{\mu}_2 = \mu_2^3 + 2\mu_1\mu_2,
\end{array}\right.
\end{equation}
where the dot denotes the derivative with respect to the arclength
parameter $s$; $\{\mu_1,\mu_2\}$ constitutes a complete set of
conformal invariants which characterize $f$ up to a conformal
motion of $\mathds{Q}_3$. He then solved \eqref{eultre1} by means
of elliptic functions, and found the explicit form of every
conformal geodesic. His method leads to a lengthy computation as
the dimension $n$ grows, thus the need of a different approach for
a generic $n$. In this paper (Section \ref{sezionearco}), we
obtain a simple form for the Euler-Lagrange equations in any
dimension without the aid of Griffiths' formalism. This expression
leads to the following, unexpected, degeneracy result (Theorem
\ref{teoreduz}): \
\begin{theorem*}
Every conformal geodesic $f:I\subset \erre \ra \mathds{Q}_n$ lies in some totally umbilical $4$-sphere $\mathds{Q}_4
\subset \mathds{Q}_n$.
\end{theorem*}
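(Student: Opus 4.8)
The plan is to turn the statement into a linear-algebra fact about the conformal Frenet frame of $f$. Realizing $\mathds{Q}_n$ as the projectivized light cone of the Lorentzian space $\erre^{n+2}$ of signature $(n+1,1)$, on which $\mob$ acts as $O^+(n+1,1)$, I would attach to $f$ a canonical lift $F\colon I\to O^+(n+1,1)$ whose columns $Y_0,\dots,Y_{n+1}$ form a pseudo-orthonormal basis adapted to the osculating flag of the curve; here $Y_0$ spans the null line representing $f(s)$ and $Y_{n+1}$ is the conjugate null vector. Using the conformal arclength $s$ as parameter and differentiating, I obtain the structure equations $\dot F=F\,K(s)$ with $K(s)\in\mathfrak{so}(n+1,1)$. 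The pseudo-orthonormality together with the Frenet adaptation forces $K$ to be banded, the sub/super-diagonal entries being normalized so as to push each $Y_i$ onto its neighbours; its only free entries are the conformal curvatures $\mu_1,\dots,\mu_{n-1}\in C^\infty(I)$, which for $n=3$ reduce to the pair $\{\mu_1,\mu_2\}$ of \eqref{eultre1}.

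The core step is then to insert the Euler-Lagrange equations produced in Section \ref{sezionearco}, which generalize \eqref{eultre1} to arbitrary $n$. The mechanism I expect is a \emph{degeneracy cascade}: alongside the equations coupling $\mu_1$ and $\mu_2$, the system should contain, for each index $k\ge 4$, an equation forcing $\mu_k\equiv 0$. Concretely, I would argue by downward induction, reading the equation attached to the highest curvature $\mu_{n-1}$ as the vanishing of a frame component that the geodesic condition requires to be zero, conclude $\mu_{n-1}\equiv 0$, and observe that the next equation then collapses to the same type for $\mu_{n-2}$, and so on, until exactly $\mu_1,\mu_2,\mu_3$ survive. This is the source of the unexpected degeneracy announced in the theorem: the conformal arclength cannot excite more than three curvatures.

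Granting $\mu_4=\dots=\mu_{n-1}\equiv 0$, the banded matrix $K$ decouples: the vanishing of these $n-4$ curvatures severs the coupling between an $(n-4)$-dimensional spacelike block of the frame and the remaining vectors, so that these $n-4$ pseudo-orthonormal spacelike frame fields become \emph{constant} and span a fixed positive-definite subspace $W\subset\erre^{n+2}$. Its orthogonal complement $V=W^{\perp}$ is a fixed $6$-dimensional subspace of signature $(5,1)$ (containing the null pair $Y_0,Y_{n+1}$ and the spacelike $Y_1,\dots,Y_4$) in which the entire osculating development of the lift $Y_0$ is trapped; in particular $Y_0$ stays in $V$, hence $f(I)\subset\mathds{Q}_4$. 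The intersection of $V$ with the light cone projectivizes to a round $4$-sphere $\mathds{Q}_4\subset\mathds{Q}_n$ containing $f$, and it is totally umbilical because the totally umbilical submanifolds of the conformal sphere are exactly the sub-spheres cut out by non-degenerate linear subspaces of $\erre^{n+2}$. This proves the theorem.

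The main obstacle is the cascade of the second paragraph: one must show that the Euler-Lagrange system genuinely forces $\mu_k\equiv0$ for every $k\ge4$, and, just as importantly, that the cutoff falls exactly at the index $3$, so that $\mathds{Q}_4$—and not some smaller $\mathds{Q}_3$—is the sharp conclusion. Since these higher equations are differential rather than purely algebraic, ruling out nonzero solutions will require tracking carefully which curvature first enters the frame at which order and combining the equations with the normalization that singles out the conformal arclength; getting this bookkeeping right is precisely what makes the induction terminate at $\mu_3$.
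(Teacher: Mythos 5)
There is a genuine gap, and it sits exactly where you place your ``main obstacle'': the degeneracy cascade is not only unproved but mischaracterized, so the induction you describe cannot be run. The Euler--Lagrange system of Theorem \ref{eullagrangecaso1} does \emph{not} contain one equation per curvature $\mu_k$, $k\ge 4$. It consists of a single scalar equation coupling $\mu_1$ with $|X|^2$ and a single $\Theta$-valued equation $\nabla^2X/\di s^2=(|X|^2+2\mu_1)X$ for the normal field $X\in\Gamma(\Theta)$. If you expand the latter in a Frenet frame (where $X=\mu_2e_3$, $\nabla e_3/\di s=\mu_3e_4$, $\nabla e_4/\di s=-\mu_3e_3+\mu_4e_5$), the left-hand side has components only along $e_3,e_4,e_5$, namely $\ddot{\mu}_2-\mu_2\mu_3^2$, $2\dot{\mu}_2\mu_3+\mu_2\dot{\mu}_3$ and $\mu_2\mu_3\mu_4$; the components along $e_b$ for $b\ge 6$ are identically zero, so the system says nothing directly about $\mu_5,\dots,\mu_{n-1}$. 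There is no equation ``attached to the highest curvature $\mu_{n-1}$'' from which to start a downward induction. The only algebraic constraint is $\mu_2\mu_3\mu_4=0$; the disappearance of all higher invariants then comes not from further Euler--Lagrange equations but from total $4$-degeneracy combined with Proposition \ref{casototalmdegenere}. A second, related problem is that positing a canonical lift carrying all $n-1$ curvatures presupposes $(n-2)$-genericity, i.e.\ $\mu_k>0$ for $2\le k\le n-2$ by the very normalization of the Frenet reduction; for $n\ge 6$ this is incompatible with $\mu_2\mu_3\mu_4=0$, so the frame you start from cannot exist globally for the curves the theorem is about, and one would have to argue locally on the generic set and patch.

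The paper avoids all of this bookkeeping by never leaving the special third order frame. It keeps the Euler--Lagrange equations in the invariant form above and observes that, because $\nabla^2X/\di s^2$ is expressed back in terms of $X$, $e_2$ and $\mu_1$ (with $\mu_1$ eliminated via the first integral $|X|^2=-\tfrac23\mu_1+C_1$), the six columns $E=\big(e_0\,|\,e_1\,|\,e_2\,|\,X\,|\,\nabla X/\di s\,|\,e_{n+1}\big)$ satisfy a closed linear system $\dot{E}=EA(s)$ with $A$ independent of $E$ (Theorem \ref{dimensriduz}). Hence their span $V(s)$ is a fixed Lorentzian subspace of dimension at most $6$, whose intersection with the light cone projectivizes to a $\mathds{Q}_{\dim V-2}\subseteq\mathds{Q}_4$ containing $f$. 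Your final step (a fixed non-degenerate subspace cuts out a totally umbilical subsphere) agrees with the paper's, but to make your argument complete you would need to replace the cascade by the single identity $\mu_2\mu_3\mu_4=0$ together with Proposition \ref{casototalmdegenere}, or adopt the closed linear ODE argument directly.
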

This allows us to limit ourselves to the study of conformal
geodesics in $\mathds{Q}_4$. After characterizing degenerate
cases, we concentrate on generic conformal geodesics in the
$4$-sphere, obtaining the Euler-Lagrange equations
\begin{equation}\label{eulquattro1}
\left\{ \begin{array}{ll}
\dot{\mu}_1+ 3 \mu_2\dot{\mu}_2=0, \qquad \mu_2>0,\\[0.2cm]
\ddot{\mu}_2 = \mu_2^3 + 2\mu_1\mu_2 + \mu_2\mu_3^2, \qquad \mu_3>0,\\[0.2cm]
2\dot{\mu}_2\mu_3 + \mu_2 \dot{\mu}_3 =0.
\end{array}\right.
\end{equation}
relating the conformal curvatures $\{\mu_1,\mu_2,\mu_3\}$ of a conformal geodesic. As it is apparent, system \eqref{eulquattro1} exhibits a clear similarity
with \eqref{eultre1} and can be integrated by using elliptic functions (Section \ref{sez_equazionimoto}). This
suggests that an explicit integration of the equations of motion
can be achieved even for $n=4$. As a matter of fact, we obtain an explicit expression for $f$ up to a conformal motion, see the discussion after Proposition \ref{soluzionecurvatura}.
Surprisingly, this process reveals even easier than the corresponding one for $n=3$, since a unique case has to be analyzed, differently from
\cite{musso} where the author has to
deal with three distinct cases.\\
\par
We need to introduce some background material for our purposes.
Although part of it is quite standard, for the convenience of the
reader and to keep the paper basically self-contained and the
notation coherent, we have decided to begin with a brief account
on the basic results in the conformal geometry of curves. The
first two sections therefore deal with the homogeneous space
structure of the $n$-sphere (Section \ref{sez_essen}) and the
geometry of immersed curves (Section \ref{sez_frenet}). Our
approach follows closely that of \cite{ScSulanke} and \cite{sul}
except for a slightly different notation. Among classical and
modern references, we mainly recommend \cite{hert} and
\cite{sharpe}: they both present a complete, self-contained
account on the conformal group and two different, very nice
geometrical proofs of Liouville's theorem. An elementary
introduction to the subject is also given in \cite{bear}, and
standard further references are \cite{vranceanu},
\cite{eisenhart}, \cite{berger}. For subtle topological details,
we suggest to the reader \cite{palais}. In Section
\ref{sez_euclidea} we discuss the links between Euclidean and
conformal invariants. Our approach leads to a (slight) improvement
of a result in \cite{romerofuster} by showing that their set of
conformal invariants indeed coincides with the one presented here,
see Proposition \ref{rome}. Another application of the circle of
ideas of Section \ref{sez_euclidea} is a natural proof of the
conformal invariance of the so-called total twist of a closed
curve in $\mathds{Q}_3$, that is, the quantity
$$
\mathrm{Tw}(f)= \frac{1}{2\pi} \int_I \tau\di s_e \qquad (\mathrm{mod} \ \mathds{Z}),
$$
where $\tau$ is the Euclidean torsion of $f$ and $s_e$ is the
Euclidean arclength. This result has been proved in \cite{CSWebb},
but it seems that their elegant argument, although not far from
our approach, is based on some sort of ``magical'' identity
involving a globally defined angle, whose appearing seems to us
not completely justified.

\section{Construction of the conformal
sphere $\mathds{Q}_n$: a short review}\label{sez_essen}

Consider $\essen$ and $\erren$ with their standard metrics of
constant curvatures, and let $\sigma: \essen\backslash \{N\} \ra
\erren$ be the stereographic projection, where $N=(1,0,\ldots,
0)\in \erre^{n+1}$ is the north pole. It is well known that
$\sigma$ is a conformal diffeomorphism. If $n\ge 3$, by
Liouville's theorem (\cite{dubnovfom}, pp.138-141; \cite{hert},
pp.52-53, \cite{sharpe}, pp. 289-290), every conformal
diffeomorphism of $\essen$ is of the form $\sigma^{-1}\circ g
\circ \sigma$, where $g$ is a composition of Euclidean
similarities of $\erren$ with possibly the inversion $
\erren\backslash \{0\} \ni x\mapsto x/|x|^2$. The assertion holds
even for $n=2$, although a proof of this fact relies, for
instance, on standard compact Riemann surfaces theory since
Liouville's theorem is false for $\mathds{C}$. We observe that the
conformal group $\mathrm{Conf}(\esse^2)$ can be also identified
with the fractional linear transformation of $\mathds{C}$, either
holomorphic or anti-holomorphic. From now on, we let $n\ge 2$ and
we fix the index convention $1\le A,B,C\le n$. We denote with
$\mathds{Q}_n$ the Darboux hyperquadric
$$
\mathds{Q}_n = \set{ (x^0:x^A:x^{n+1}) \ | \ \sum_A (x^A)^2 - 2x^0x^{n+1}=0}
\subset \mathds{P}^{n+1}(\erre).
$$
The Dirac-Weyl embedding $\chi:\erren\ra\mathds{Q}_n$ is defined
by
$$
\chi : x\longmapsto \pa{1 : x : \frac 12 |x|^2}
$$
and it extends to a diffeomorphism $\chi\circ \sigma : \essen \ra
\mathds{Q}_n$ by setting $\chi\circ\sigma (N) =(0:0:1)$. The
advantage of such a representation for the sphere is that every
conformal diffeomorphism of $\essen$ acts as a linear
transformation on the homogeneous coordinates of $\mathds{Q}_n$,
so that $\mathrm{Conf}(\essen)$ can be viewed as the projectivized
of the linear subgroup of $GL(n+2)$ preserving the quadratic form
which
defines the Darboux hyperquadric. \\
Endow $\erre^{n+2}$ with the Lorentzian metric $\metric$
represented, with respect to the standard basis
$\{\eta_0,\eta_A,\eta_{n+1}\}$, by the matrix
$$
S=\left(\begin{array}{ccc}
0 & 0 & -1 \\
0 & I_n & 0 \\
-1 & 0 & 0
\end{array}\right),
$$
and let $L^+$ be the positive light cone, that is, $L^+=\{v= \,
^t(v^0,v^A,v^{n+1}) \in \erre^{n+2} : \, ^tvSv=0, \ v^0+ v^{n+1} >
0 \}$. Note that $L^+$ projectivizes to $\mathds{Q}_n$ and that
$\eta_0,\eta_{n+1} \in L^+$. Moreover, there is a bijection
between $\mathrm{Conf}(\essen)$ and the Lorentz group of $\metric$
preserving the positive light cone (usually called the
orthochronous Lorentz group). This gives a Lie group structure to
the conformal group $\mathrm{Conf}(\essen)$, which can be proved
to be unique when the action of $\mathrm{Conf}(\essen)$ on
$\essen$ is required to be smooth (see \cite{palais}, pp. 95-98).
In particular, the identity component of the Lorentz group is
called the \textbf{M\"obius group}, $\mob(n)$, and coincides with
the subgroup of the orientation preserving elements of
$\mathrm{Conf}(\essen)$. The transitivity of the action of
$\mob(n)$ on the $n$-sphere gives $\mathds{Q}_n$ a homogeneous
space structure, allowing us to identify it with the space of left
cosets $\mob(n)/\mob(n)_0$, where $\mob(n)_0$ is the isotropy
subgroup of $[\eta_0]\in \mathds{Q}_n$:
\begin{equation}\label{G0}
\mob(n)_0 = \set{\left(
\begin{array}{ccc}
r^{-1} & ^txA & \frac{1}{2}r|x|^2\\
0 &  A & rx \\
0 & 0 & r
\end{array}
\right) \left|
\begin{array}{l}
r >0, \ x \in \erren, \\[0.2cm]
A \in SO(n)
 \end{array}
 \right. }.
\end{equation}
It follows that the principal bundle projection $\pi: \mob(n) \ra
\mathds{Q}_n$ associates to a matrix $G=(g_0|g_A|g_{n+1})$ the
point $[G\eta_0]= [g_0]\in \mathds{Q}_n$. From now on, we shall
use the Einstein  summation convention. Let $\mab(n)$ denote the
Lie algebra of $\mob(n)$; the Maurer-Cartan form $\Phi$ of
$\mob(n)$ is the $\mab(n)$-valued $1$-form
$$
\Phi = \left(
\begin{array}{ccc}
\Phi^0_0 & \Phi^0_B & 0 \\[0.2cm]
\Phi^A_0 & \Phi^A_B & \Phi^A_{n+1} \\[0.2cm]
0 & \Phi^{n+1}_B & \Phi^{n+1}_{n+1}
\end{array}
\right),
$$
with the symmetry relations
$$
\Phi^{n+1}_{n+1} = -\Phi^0_0, \quad \Phi^A_B=-\Phi^B_A,\quad
\Phi^A_{n+1} = \Phi^0_A, \quad  \Phi^{n+1}_B = \Phi^B_0
$$
and satisfying the structure equation $\di \Phi +
\Phi\wedge\Phi=0$, which component-wise reads
\begin{equation}\label{eqstruttura}
\left\{ \begin{array}{rcl} \di \Phi^0_0 & = & -\Phi^0_A \wedge
\Phi^A_0; \\[0.2cm]
\di \Phi^A_0 & = & -\Phi^A_0 \wedge
\Phi^0_0 - \Phi^A_B \wedge \Phi^B_0; \\[0.2cm]
\di \Phi^0_A & = & -\Phi^0_0 \wedge
\Phi^0_A - \Phi^0_B \wedge \Phi^B_A; \\[0.2cm]
\di \Phi^A_B & = & -\Phi^A_0 \wedge \Phi^0_B -\Phi^A_C\wedge \Phi^C_B - \Phi^0_A \wedge \Phi^B_0.
\end{array}\right.
\end{equation}
Through a local section $s: U\subset \mathds{Q}_n\ra \mob(n)$,
$\Phi$ pulls back to a flat Cartan connection $\phi= s^*\Phi=
s^{-1}\di s$. In particular, the set $\{\phi^A_0\}$ gives a local
basis for the cotangent bundle of $\mathds{Q}_n$. Under a change
of section $\tilde{s}=sK$, where $K: U\subset \mathds{Q}_n\ra
\mob(n)_0$, the change of gauge becomes
$$
\tilde{\phi} = \tilde{s}^{-1}\di \tilde{s} = K^{-1}\phi K + K^{-1}\di K.
$$
By the expression of $\mob(n)_0$ in \eqref{G0}, we have in
particular
\begin{equation}\label{cambioprimo}
(\tilde{\phi}^A_0) = r^{-1}\,^t\!A(\phi^A_0),
\end{equation}
where $(\phi^A_0)$ stands for the column vector whose $A$-th
component is $\phi^A_0$. It follows that
$$
\tilde{\phi}^A_0 \otimes \tilde{\phi}^A_0 = r^{-2} \phi^A_0\otimes
\phi^A_0, \qquad \tilde{\phi}^1_0\wedge \ldots \wedge
\tilde{\phi}^n_0 = r^{-n} \phi^1_0\wedge \ldots \wedge \phi^n_0,
$$
which implies that
$$
\Big\{\big(U,\phi^A_0\otimes\phi^A_0\big)\ : \ U\subset \mathds{Q}_n \text{
domain of a local section } s:U\ra \mob(n) \Big\}
$$
defines a conformal structure on $\mathds{Q}_n$, that is, a
collection of locally defined metrics varying conformally on the
intersection of their domains of definition, together with an
orientation (locally defined by $\phi^1_0\wedge \ldots \wedge
\phi^n_0$), both preserved by $\mob(n)$. It is easy to prove that,
with this conformal structure, $\chi\circ\sigma : \essen \ra
\mathds{Q}_n$ is a conformal diffeomorphism. This gives sense to
the whole construction.
\section{The Frenet-Serret equations for curves in
$\mathds{Q}_n$}\label{sez_frenet}

Let $I \subset \erre $ be an open interval and let $f : I
\rightarrow \mathds{Q}_n$ be an immersion. We give an outline of
the frame reduction procedure and deduce the Frenet-Serret
equation for the curve $f$. This is a standard procedure, see for
example \cite{sul}, \cite{musso}. Henceforth we adopt the
following index conventions:
$$
1 \le A,B,C, \ldots \le n \qquad , \qquad 2 \le \alpha,\beta,
\ldots \le n.
$$
Let $e:U\subset M\ra\mob(n)$ be a zeroth order frame along $f$,
namely a smooth map such that $\pi\circ e=f_{|U}$, and set
$\phi=e^*\Phi$. If $\tilde e$ is another zeroth order frame along
$f$, then $\tilde e = eK$, where $K$ is a $\mob(n)_0$-valued
smooth map. It follows that
\begin{equation}\label{cambiogauge}
\widetilde{\phi}= \widetilde{e}^*\Phi = \widetilde{e}^{-1}\di
\widetilde{e} = K^{-1}\phi K + K^{-1}\di K.
\end{equation}
From \eqref{cambioprimo} and since $f$ is an immersion, for a
fixed point $p \in I$, it is always possible to consider a frame
$e$ such that
\begin{equation}\label{primacond}
\phi^\alpha_0 := e^*\Phi^\alpha_0 =0
\end{equation}
at $p$. The isotropy subgroup of such frames is
\begin{equation}\label{G1}
\mob(n)_1 = \set{ \left(
\begin{array}{cccc}
r^{-1} & x & ^tyB & \frac{1}{2}r(x^2+|y|^2)\\
0 &  1 & 0 & rx \\
0 & 0 & B & ry \\
0 & 0 & 0 & r
\end{array}
\right) \left|
\begin{array}{l}
r >0, \ x \in \erre \\
y \in \erre^{n-1},\\
B \in SO(n-1)
 \end{array}
 \right.}
\end{equation}
and since it is independent of $p$, by the standard theory of
frame reduction (see \cite{sulsvec}, \cite{sulwint},
\cite{sharpe}) smooth zeroth order frames can be chosen, which
satisfy condition \eqref{primacond} in a suitable
neighbourhood of $p$. Such frames will be called \textbf{first order frames}.\\
If $e$ and $\tilde e$ are first order frames, they are related by
$\tilde e=eK$, where now $K$ takes values in $\mob(n)_1$. From
\eqref{cambiogauge} we get
\begin{equation}\label{primuz}
\widetilde{\phi}^1_0= r^{-1} \phi^1_0, \qquad \widetilde{\phi}^\alpha_1= B^\beta_\alpha\pa{\phi^\beta_1-y^\beta\phi^1_0},
\end{equation}
thus the form $\phi^1_0$ determines a conformal structure on $I$.
If we set $\phi^\alpha_1 = h^\alpha\phi^1_0$, where $h^\alpha$ are
smooth functions  locally defined on $I$, then
\begin{equation}\label{cambhalfamono}
\tilde h^\alpha =rB^\beta_\alpha\pa{h^\beta-y^\beta}.
\end{equation}
At any point $p\in I$, we can therefore consider a first order
frame such that $h^\alpha$, hence $\phi^\alpha_1$, vanishes. Since
the isotropy subgroup preserving such frames is
\begin{equation}\label{G2}
\mob(n)_2 = \set{ \left(
\begin{array}{cccc}
r^{-1} & x & 0 &\frac{1}{2}rx^2\\
0 &  1 & 0 & rx \\
0 & 0 & B & 0 \\
0 & 0 & 0 & r
\end{array}
\right) \left|
\begin{array}{l}
r >0, \ x \in \erre \\
B \in SO(n-1)
\end{array}
\right.},
\end{equation}
independent of $p$, we can locally choose a frame satisfying $h^\alpha=0$. Such frames will be called \textbf{second order frames} or \textbf{Darboux frames} along $f$.\\
Under a change of Darboux frames $\tilde e=eK$, for a $\mob(n)_2$-valued $K$, denoting
by $e_a$ the $a$-th column of $e$, we have
\begin{equation}\label{cambframesec}
\left\{\begin{array}{rcl}
\tilde\phi^\alpha_\beta &=& B^\gamma_\alpha\phi^\gamma_\delta B^\delta_\beta + B^\gamma_\alpha\di B^\gamma_\beta \\[0.2cm]
\tilde e_\alpha &=& B^\beta_\alpha e_\beta. \end{array}\right.
\end{equation}
Thus we can define a vector bundle, denoted by $N$ and called the
\textbf{normal bundle}, by declaring $\set{e_\alpha}$ a local
basis. When $n=2$, the normal bundle is simply the span of $e_2$.
$N$ is endowed with a global inner product, defined by requiring
$\{e_\alpha\}$ to be orthonormal, and a compatible connection
$\overline{\nabla}$ by setting
\begin{equation*}
\overline{\nabla} e_\alpha = \phi^\beta_\alpha \otimes e_\beta.
\end{equation*}
We set
\begin{equation}\label{palpha}
\phi^0_\alpha = p^\alpha \phi^1_0,
\end{equation}
for some smooth local functions $p^\alpha$. Then, under a change of Darboux frames,
\begin{equation}\label{cambpalpha}
\tilde p^\alpha = r^2 B^\beta_\alpha p^\beta,
\end{equation}
which, together with \eqref{primuz} implies that the form
\begin{equation}\label{nond}
\sqrt[4]{\sum_\alpha (\tilde p^\alpha)^2} \phi^1_0
\end{equation}
is globally defined. Note that it may be only of class $C^{0,1/2}$
locally around the points where it vanishes. Since $I$ is an
interval, there exists a function $s: I \rightarrow\erre$ of class
$C^{1,1/2}(I)$ such that
\begin{equation}\label{lungharco}
\sqrt[4]{\sum_\alpha (\tilde p^\alpha)^2} \phi^1_0 = \di s.
\end{equation}
Observe that $s$ is defined up to a constant. Moreover, up to
changing the sign, it is non-decreasing and smooth, strictly
increasing on every connected subinterval of $I$ where
$\sum_\alpha(p^\alpha)^2>0$.
\begin{definition}\label{arcoconforme}
Every function $s:I\rightarrow \erre$ such that \eqref{lungharco}
holds is called a \emph{\textbf{conformal arclength}}.
\end{definition}
Driven by geometrical considerations, we make the non-degeneracy
assumption that $\di s$ never vanishes on $I$.
\begin{definition}\label{defnondegen}
Given an immersed curve $f: I \rightarrow \mathds{Q}_n$, a point
$q\in I$ is called \emph{\textbf{$1$-generic}} if
\begin{equation}\label{nondegen}
\sum_\alpha (p^\alpha)^2 \neq 0
\end{equation}
at $q$. The immersion $f$ is said to be
\emph{\textbf{$1$-generic}} if every point of $I$ is $1$-generic,
and it is said to be \emph{\textbf{totally $1$-degenerate}} if
$\sum_\alpha (p^\alpha)^2 \equiv 0$ on $I$. A totally
$1$-degenerate point is called a \emph{\textbf{vertex}} of $f$.
\end{definition}
It shall be noted that, when $n\ge 3$, $1$-generic curves are an
open, dense subset (with respect to the $C^\infty$ topology) of
the space of smooth curves, either closed or not. This is a
consequence of strong transversality, see \cite{arnold}. On the
contrary, global topological obstructions appear when the ambient
space has dimension $2$. For instance, the well-known four-vertex
theorem ensures that every smooth, simple closed curve in
$\mob(2)$ has at least four vertexes. For an account on this
result in its various forms one can consult \cite{CSWebb},
\cite{osserman}, \cite{pinkall}.\par
Given a $1$-generic curve $f$ and a point $p$, by
\eqref{cambpalpha} we can always choose a Darboux frame with the
property that $p^2=1$, $p^3=\ldots=p^n=0$, that is,
\begin{equation}\label{terzor}
\phi^0_2 = \phi^1_0=\di s, \qquad \phi^0_3 = \phi^0_4 = \ldots =
\phi^0_n = 0.
\end{equation}
The isotropy subgroup for such frames is clearly
\begin{equation}\label{G3}
\mob(n)_3 = \set{ \left(
\begin{array}{ccccc}
1 & x & 0 & 0 &\frac{1}{2}x^2\\
0 &  1 & 0 & 0 & x \\
0 & 0 & 1 & 0 & 0 \\
0 & 0 & 0 & C & 0 \\
0 & 0 & 0 & 0 & 1
\end{array}\right) \left|
\begin{array}{ccc}
   x \in \erre  \\
  C \in SO(n-2)
\end{array}
\right.}.
\end{equation}
Note that, if $n=2$, the rows and columns containing $C$ do not
appear at all and, if $n=3$, $C$ reduces to the real number $1$.\\
Since this subgroup does not depend on $p$, we can define a
\textbf{third order frame} along a $1$-generic $f$ as a second
order frame satisfying \eqref{terzor}. Now, proceeding with the
reduction, we set
\begin{equation}\label{definqtilde}
\phi^0_0 = q^2 \phi^1_0, \qquad \phi^b_2 = q^b \phi^1_0 \quad
\text{ for } \ b \in \{3,\ldots,n\}, \ n\ge 3,
\end{equation}
for some locally defined smooth functions $q^\alpha$. Under a change of third order frame we have
\begin{equation}\label{qtilde}
\tilde q^2 = q^2-x, \qquad \tilde q^b=C^d_b q^d,
\end{equation}
therefore at every point $p\in I$ we can choose a third order frame such that $q^2=0$, hence $\phi^0_0=0$. The isotropy subgroup preserving such frames is
\begin{equation}\label{G3spec}
\mob(n)_{3\mathrm{spec}} = \set{ \left(
\begin{array}{ccc}
I_3 & 0 & 0 \\
0 & C & 0 \\
0 & 0 & 1
\end{array}
\right) |\ C \in
SO(n-2)},
\end{equation}
so the reduction can be performed smoothly around every $1$-generic point. We define a
\textbf{special third order frame} along a $1$-generic curve $f$ as a third order frame such that
$$
\phi^0_0 = 0.
$$
Now the form $\phi^0_1$ is independent of the chosen special third
order frame so, writing
\begin{equation}\label{deflambda1}
\phi^0_1 = \mu_1\phi^1_0= \mu_1 \di s,
\end{equation}
defines a conformal invariant $\mu_1 \in C^\infty(I)$, which may
change sign on $I$. We point
out that the sign of $\mu_1$ cannot be reversed by changing the (oriented) special third order frame.\\
If $n=2$ or $3$, \eqref{G3spec} reduces to the identity, and this
completes the procedure. A special third order frame along a
$1$-generic curve $f$ will be called a \textbf{Frenet frame}.
Moreover, when $n=3$, $q^3$ is a third order invariant of the
generic curve $f$, that we shall denote by
$\mu_2$. In this case, also the sign of $\mu_2$ may vary on $I$.\\
Assume now $n\ge 3$. The structure of the isotropy subgroup
\eqref{G3spec} (actually, even that of \eqref{G3}) implies that
\begin{equation}\label{cambE}
\tilde{e}_2 = e_2, \qquad \tilde e_b = C_b^c e_c \qquad b,c \in \{3,\ldots,n\},
\end{equation}
hence $N$ splits as the Whitney sum $<e_2>\oplus \, \Theta$, where
$\Theta$ is locally spanned by $\{e_b\}$ and is endowed with a
natural connection, defined by setting
\begin{equation}\label{connexion}
\nabla e_b = \phi^c_b \otimes e_c,
\end{equation}
which is compatible with the Riemannian structure induced by $N$. We denote with $|\cdot |$ the induced norm on $\Theta$.\\
By \eqref{cambE} and \eqref{qtilde}, the smooth section
\begin{equation}\label{defX}
X = q^be_b \in \Gamma(\Theta)
\end{equation}
is independent of the third order frame considered, hence globally defined.\\
As in Definition \ref{defnondegen}, to be able to proceed we need a second non-degeneracy condition:
\begin{definition}\label{defnondegen2}
Given an immersed curve $f: I \rightarrow \mathds{Q}_n$, $n\ge 3$,
a point $q\in I$ is called \emph{\textbf{$2$-generic}} if it is
$1$-generic and, for a third order frame,
\begin{equation}\label{nondegen2}
\phi^c_{2} \neq 0 \quad \text{at } q \text{ for at least one }
c\in \{3,\ldots,n\},
\end{equation}
that is, $X(q)\neq 0$. The immersion $f$ is
\emph{\textbf{$2$-generic}} if it is $2$-generic at every point,
and it is \emph{\textbf{totally $2$-degenerate}} if it is
$1$-generic and $\phi^c_{2} \equiv 0$ on $I$ for every
$c\in\{3,\ldots,n\}$, that is, $X\equiv 0$.
\end{definition}
When $n\ge 4$, for every $2$-generic point $p\in I$ we can choose
a special third order frame $e$ such that $X$ points in the
direction of $e_3$, that is,
\begin{equation}\label{4red}
q^3 = \mu_2> 0, \quad q^b = 0
\quad \text{for } \ b \in \{4,\ldots,n\}.
\end{equation}
The isotropy subgroup preserving such frames has the structure
\begin{equation}\label{G4}
\mob(n)_4 = \set{ \left(
\begin{array}{ccc}
I_4 & 0 & 0 \\
0 & C_1 & 0 \\
0 & 0 & 1
\end{array}
\right) | \ C_1 \in SO(n-3)}.
\end{equation}
This shows that, for $2$-generic curves, we can proceed with the reduction as usual, and if $n=4$ this is the last step. A special third order frame along a $2$-generic curve $f$ satisfying
$$
\phi^b_2 = 0 \quad \text{ for } \ b
\in \{4,\ldots,n\}.
$$
will be called a \textbf{fourth order frame}. If $n=4$ this frame
will be called a \textbf{Frenet frame} since it gives
the last reduction.\\
Summarizing what we have got so far:
\begin{itemize}
\item[-] if $n=2$, in a Frenet frame we can write the
    pull-back $\phi$ of the Maurer-Cartan form as
\begin{equation}\label{cartanformfrenet}
\phi = \left(
\begin{array}{cccc}
0 & \mu_1 \di s & \di s & 0 \\
\di s & 0 & 0 & \mu_1 \di s \\
0 & 0 & 0 & \di s \\
0 & \di s & 0 & 0
\end{array}
\right);
\end{equation}
\item[-] if $n=3$, in a Frenet frame we can write the
    pull-back $\phi$ of the Maurer-Cartan form as
\begin{equation}\label{cartanformfrenet}
\phi = \left(
\begin{array}{ccccc}
0 & \mu_1 \di s & \di s & 0 & 0 \\
\di s & 0 & 0 & 0 & \mu_1 \di s \\
0 & 0 & 0 & -\mu_2 \di s & \di s \\
0 & 0 & \mu_2\di s & 0 & 0 \\
0 & \di s & 0 & 0 & 0
\end{array}
\right);
\end{equation}
\item[-] If $n=4$, the form $\phi^4_3$ is invariant, and we can therefore write
$$
\phi^4_3 = \mu_3 \phi^1_0 = \mu_3 \di s
$$
for some smooth, possibly changing sign function $\mu_3$ on
$I$. The Maurer-Cartan form $\phi$ has the expression
\begin{equation}\label{cartanformfrenet4}
\phi = \left(
\begin{array}{cccccc}
0 & \mu_1 \di s & \di s & 0 &  0 & 0 \\
\di s & 0 & 0 & 0 & 0 & \mu_1 \di s \\
0 & 0 & 0 & -\mu_2 \di s & 0 & \di s \\
0 & 0 & \mu_2\di s & 0 & -\mu_3 \di s & 0 \\
0 & 0 & 0 & \mu_3 \di s & 0 & 0 \\
0 & \di s & 0 & 0 & 0 & 0
\end{array}
\right).
\end{equation}
\end{itemize}
The previous expressions, together with the definition of $\phi$, that is $\di e = e\phi$ give rise to the Frenet formulae
\begin{equation}\label{frenetformulae}
\begin{array}{cc}
n=3 & n=4 \\[0.2cm]
\left\{
\begin{array}{rcl}
\di e_0 & = & \di s \, e_1 \\
\di e_1 & = & \mu_1 \di s \, e_0 + \di s \, e_4\\
\di e_2 & = & \di s \, e_0 +\mu_2\di s \, e_3\\
\di e_3 & = & -\mu_2 \di s \, e_2 \\
\di e_4 & = & \mu_1 \di s \, e_1 + \di s\, e_2
\end{array}
\right. \quad &
\left\{
\begin{array}{rcl}
\di e_0 & = & \di s \, e_1 \\
\di e_1 & = & \mu_1 \di s \, e_0 + \di s \, e_5\\
\di e_2 & = & \di s \, e_0 +\mu_2\di s \, e_3\\
\di e_3 & = & -\mu_2 \di s \, e_2 + \mu_3 \di s \, e_4 \\
\di e_4 & = & -\mu_3 \di s \, e_3 \\
\di e_5 & = & \mu_1 \di s \, e_1 + \di s\, e_2
\end{array}\right.
\end{array}
\end{equation}
The general reduction steps for $n\ge5$ can now be carried on
inductively.\\
Let $4\le k\le n-1$. For a $(k-2)$-generic curve,
writing $\phi^c_{k-1} = q^c_{k-1}\phi^1_0$, $c\ge k$, and keeping
in mind the isotropy subgroup \eqref{G4}, the vector field
$$
X_{(k-1)} = q^c_{k-1}e_c, \qquad k\le c\le n
$$
Is globally defined and independent of the chosen $k$-th order
frame (in this notation, it is convenient for the reader to rename
$X$ in \eqref{defX} as $X_{(2)}$). Moreover, by construction
$X_{(k-1)}$ is orthogonal to the span of $e_3,\ldots,e_{k-1}$.
\begin{definition}\label{defnondegenk}
Given an immersed curve $f: I \rightarrow \mathds{Q}_n$ and an
integer $4\le k<n$, a point $q\in I$ is called
\emph{\textbf{$(k-1)$-generic}} if it is $(k-2)$-generic and, for
$k^\mathrm{th}$-order frames
\begin{equation}\label{nondegenk}
\phi^c_{k-1} \neq 0 \quad \text{ at } q \text{ for at least one } c\in \set{k,\ldots,n},
\end{equation}
or, equivalently, $X_{(k-1)}(q)\neq 0$. The immersion $f$ is
$(k-1)$-generic if it is $(k-1)$-generic at every point, and is
\emph{\textbf{totally $(k-1)$-degenerate}} if it is
$(k-2)$-generic and $X_{(k-1)}\equiv 0$ on $I$.
\end{definition}
For $(k-1)$-generic curves, we can choose a $k^\mathrm{th}$ order
frame such that $X_{(k-1)}$ points in the direction of $e_k$ at
$p\in I$, that is,
\begin{equation}\label{polarizz}
\phi^k_{k-1} = \mu_{k-1} \phi^1_0, \quad \phi^b_{k-1} = 0 \quad \text{ for } \ b \in \set{k+1,\ldots,n}
\end{equation}
and $\mu_{k-1} = |X_{(k-1)}|> 0$. The isotropy subgroup of the
reduction has the form
\begin{equation}\label{Gk}
\mob(n)_{k+1} = \set{ \left(
\begin{array}{ccc}
I_{k+1} & 0 & 0 \\
0 & C_{k-2} & 0 \\
0 & 0 & 1
\end{array}
\right) | \ C_{k-2} \in SO(n-k) },
\end{equation}
and since it does not depend on the point $p\in I$,
we can smoothly define a $(k+1)^{\text{th}}$-order frame along a $(k-1)$-generic curve
$f$ as a $k^{\text{th}}$-order frame along $f$ such that
\begin{equation*}
  \phi^b_{k-1} = 0 \ \text{ for } \ b \in \set{k+1,\ldots,n}.
\end{equation*}
%
%
%
Finally, for $k=n-1$, we have constructed, on an $(n-2)$-generic curve,
an $n^{\text{th}}$-order frame and, by \eqref{Gk}, the reduction is complete.\\
The $n^\mathrm{th}$ order frame so constructed is called a \textbf{Frenet frame}.\\
The only form left untreated is the now globally defined $\phi^n_{n-1}$. We set
\begin{equation*}
  \phi^n_{n-1}=\mu_{n-1}\phi^1_0, \qquad X_{(n-1)}= \mu_{n-1}e_n
\end{equation*}
for some $\mu_{n-1}\in C^\infty(I)$, not necessarily positive.
According to the above definitions, we say that $f$ is
\textbf{$(n-1)$-generic} if $\mu_{n-1}\neq 0$ at every point of
$I$, and \textbf{totally $(n-1)$-degenerate} if $\mu_{n-1}\equiv
0$.\\
For those who prefer working with Koszul formalism, the above
reduction procedure on the bundle $\Theta$ can be rephrased and
summarized as follows: at each step $k$, we identify a vector
$X_{(k-1)}\in \Gamma(\Theta)$ independent of the chosen frame;
then, if $|X_{(k-1)}|> 0$, locally we set $e_{k}=
X_{(k-1)}/|X_{(k-1)}|$ and we covariantly differentiate $e_{k}$
along $f$ with respect to the connection $\nabla$ on $\Theta$.
$X_{(k)}$ is defined as the component of $\nabla e_{k}/\di s$
orthogonal to the span of $\{e_3,\ldots, e_k\}$. The
non-degeneracy condition is equivalent to the non vanishing of
this component. With all the genericity assumptions, we can
proceed up until we provide an orthonormal basis of $\Theta$.\par
The Frenet-Serret equations $\di e = e \phi$ for an
$(n-2)$-generic $f$ read
\begin{equation}\label{frenetserretn}
\left\{
\begin{array}{lcl}
\di e_0 & = & \di s \, e_1 \\
\di e_1 & = & \mu_1 \di s \, e_0 + \di s \, e_{n+1}\\
\di e_2 & = & \di s \, e_0 +\mu_2\di s \, e_3\\
\di e_k & = & -\mu_{k-1} \di s \, e_{k-1} + \mu_k \di s \, e_{k+1} \qquad k \in \{3,\ldots, n-1\}.\\
\di e_n & = & -\mu_{n-1} \di s \, e_{n-1} \\
\di e_{n+1} & = & \mu_1 \di s \, e_1 + \di s\, e_2
\end{array}
\right.
\end{equation}
The following characterization of degeneracy can be proved.
Although this result already appears in \cite{sul}, we present
here a slightly different proof of group-theoretical nature.
\begin{proposition}\label{casototalmdegenere}
Let $I\subset \erre$ be an interval and $f:I\rightarrow \mathds{Q}_n$, $n\ge 3$, be an immersion.
Then $f$ is totally $1$-degenerate if and only if there exists a conformal circle $\mathds{Q}_1\subset
\mathds{Q}_n$ such that $f(I)\subset Q_1$.\\
Moreover, for every $k\in \{2,\ldots,n-1\}$ if $f$ is a
$(k-1)$-generic curve, then $f$ is totally $k$-degenerate if and
only if there exists a conformal $k$-sphere $\mathds{Q}_k\subset
\mathds{Q}_n$ such that $f(I) \subset \mathds{Q}_k$.
\end{proposition}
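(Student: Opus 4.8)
The plan is to encode each degeneracy hypothesis as the statement that a suitable $(k+2)$-dimensional subspace $V\subset\erre^{n+2}$, built from the moving frame, is independent of the parameter. I recall from the construction in Section~\ref{sez_essen} that a conformal $k$-sphere is precisely a set of the form $\mathds{P}(V\cap L^+)$ with $V$ a Lorentzian $(k+2)$-plane, $\mob(n)$ acting transitively on such planes. Accordingly I would take, for $2\le k\le n-1$ along a $(k+1)^{\text{th}}$-order frame, $V(s)=\mathrm{span}\set{e_0,e_1,\ldots,e_k,e_{n+1}}$, and for the totally $1$-degenerate case along a Darboux frame $V(s)=\mathrm{span}\set{e_0,e_1,e_{n+1}}$ (the appropriate frame, since these curves are not $1$-generic). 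The Möbius orthogonality relations show at once that $V$ is Lorentzian of signature $(k+1,1)$: it is the orthogonal sum of the hyperbolic plane $\mathrm{span}\set{e_0,e_{n+1}}$ and the spacelike block $\mathrm{span}\set{e_1,\ldots,e_k}$.

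First I would establish the ``only if'' implications. The point is that the single degeneracy equation removes the unique term of $\di e_a$ ($a$ a spanning index) lying outside $V(s)$. For $2\le k\le n-1$, total $k$-degeneracy means $X_{(k)}\equiv 0$, hence $\mu_k\equiv 0$, so \eqref{frenetserretn} gives $\di e_k=-\mu_{k-1}\di s\,e_{k-1}$ and every other $\di e_a$ with $a\in\set{0,1,\ldots,k,n+1}$ is manifestly a combination of $e_0,\ldots,e_k,e_{n+1}$; in particular the $\di s\,e_2$ in $\di e_{n+1}$ and, for $k=2$, the would-be $\mu_2\di s\,e_3$ in $\di e_2$ are accounted for. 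For $k=1$, in a Darboux frame one has $\phi^\alpha_1=0$, and the hypothesis $\phi^0_\alpha\equiv 0$ kills the term $\phi^0_\alpha e_\alpha$ in $\di e_{n+1}$, so $\di e_0,\di e_1,\di e_{n+1}$ all lie in $\mathrm{span}\set{e_0,e_1,e_{n+1}}$. In either case the decomposable multivector $\Omega=e_0\wedge e_1\wedgedot e_k\wedge e_{n+1}$ satisfies $\di\Omega=\psi\,\Omega$ for a scalar $1$-form $\psi$ (each Leibniz term reproduces $\Omega$ up to scale or vanishes through a repeated factor; here in fact $\psi=0$, the frame equations being off-diagonal). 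Hence $[\Omega]$ is constant in $\mathds{P}(\Lambda^{k+2}\erre^{n+2})$, so $V:=V(s)$ is a fixed subspace, and since $f(s)=[e_0(s)]$ with $e_0(s)\in V\cap L^+$ we obtain $f(I)\subset\mathds{P}(V\cap L^+)=\mathds{Q}_k$.

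For the converse I would reason by contraposition on the top curvature, starting from $f(I)\subset\mathds{Q}_k=\mathds{P}(V\cap L^+)$ with $V$ fixed and Lorentzian. Then $e_0\in V$, and the Frenet-Serret equations propagate membership: $e_1=\di e_0/\di s\in V$, next $e_{n+1}\in V$ and $e_2\in V$, and for $3\le j\le k$ the genericity $\mu_{j-1}>0$ lets me solve $\di e_{j-1}=-\mu_{j-2}\di s\,e_{j-2}+\mu_{j-1}\di s\,e_j$ for $e_j\in V$. Thus the $k+2$ independent vectors $e_0,\ldots,e_k,e_{n+1}$ span the $(k+2)$-dimensional $V$. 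Now $\di e_k\in V$ (as $e_k\in V$ and $V$ is fixed), while $\di e_k=-\mu_{k-1}\di s\,e_{k-1}+\mu_k\di s\,e_{k+1}$ with $e_{k-1}\in V$ and $e_{k+1}\in V^\perp$ a unit spacelike vector. Were $\mu_k\neq 0$ somewhere, this would place $e_{k+1}\in V\cap V^\perp=\set{0}$, impossible; hence $\mu_k\equiv 0$ and $f$ is totally $k$-degenerate. The case $k=1$ is identical: $\set{e_0,e_1,e_{n+1}}$ spans the fixed Lorentzian $3$-plane $V$, and since each $e_\alpha\in V^\perp$ the containment $\di e_{n+1}\in V$ forces $\phi^0_\alpha\equiv 0$, i.e. $\sum_\alpha(p^\alpha)^2\equiv 0$.

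The hard part will be the bookkeeping that makes $V(s)$ genuinely invariant: one must pick the frame matching the exact level of genericity (a Darboux frame for $k=1$, a $(k+1)^{\text{th}}$-order frame for $k\ge 2$) and check that the one degeneracy relation $\mu_k\equiv 0$ (respectively $\phi^0_\alpha\equiv 0$) is precisely what confines every relevant $\di e_a$ to $V$. Once this is in place, the scalar wedge ODE and the nondegeneracy $V\cap V^\perp=\set{0}$ finish both directions routinely.
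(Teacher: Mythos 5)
Your proof is correct, but it takes a genuinely different route from the paper's. The paper argues upstairs on the group: it shows that the forms $\Phi^b_0,\Phi^b_1,\Phi^b_\eta,\Phi^0_b$ generate a differential ideal on $\mob(n)$, so the associated distribution integrates to the cosets of a subgroup $T\simeq \mob(k)\times SO(n-k)$; total $k$-degeneracy says exactly that $e(I)$ is tangent to this distribution, hence lies in a single leaf, which projects to a conformal $k$-sphere, and the converse is obtained by composing $f$ with a local section of $T\ra \mathds{Q}_k$. You instead work downstairs in $\erre^{n+2}$: you encode the same vanishing conditions as the statement that the Lorentzian $(k+2)$-plane $V(s)=\langle e_0,\ldots,e_k,e_{n+1}\rangle$ satisfies a linear ODE within itself, hence is constant, and you read the converse off the splitting $V\oplus V^\perp$ together with the non-vanishing of $\mu_{k-1}$. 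This is precisely the mechanism the paper itself deploys later, in Theorem \ref{dimensriduz}, for conformal geodesics, so your argument unifies the two results and is more elementary (uniqueness for linear ODEs plus the Witt-type transitivity of $\mob(n)$ on Lorentzian $(k+2)$-planes, which you should state explicitly, since ``conformal $k$-sphere'' acquires its meaning only through the identification with $\mathds{P}(V\cap L^+)$). What the group-theoretic proof buys in exchange is the additional observation, extracted from the isomorphism $\tau\times\xi$ and diagram \eqref{diagramma}, that the conformal invariants of $f$ computed in $\mathds{Q}_k$ coincide with those computed in $\mathds{Q}_n$ --- information the paper relies on for the subsequent codimension reduction and which is not immediate from your version. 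Your converse is, if anything, more explicit than the paper's about matching the order of the frame to the genericity hypothesis and about where exactly the containment $\di e_k\in V$ forces $\mu_k\equiv 0$.
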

\begin{proof}
We assume $k\ge 2$, the other case being analogous. Fix the index
convention
$$
\eta,\nu \in \{2,\ldots, k\}, \qquad b,c \in \{k+1,\ldots, n\}.
$$
Consider on $\mob(n)$ the ideal $\mathcal{I}$ generated by the
forms $ \Phi^b_0, \Phi^b_1, \Phi^b_\eta, \Phi^0_b$. Using the
structure equations \eqref{eqstruttura} and the symmetries of
$\mab(n)$, $\mathcal{I}$ can be proved to be a differential ideal,
that is, $\di \mathcal{I}\subset \mathcal{I}$. The distribution
$\Delta$ defined by $\mathcal{I}$ is therefore integrable.
Moreover, at the identity, $\Delta$ is given by
\begin{equation}\label{algebralieT}
\Delta_I=\set{
\left(
\begin{array}{cccc}
  a & {}^tx & 0 & 0 \\
  y & D  & 0 & x \\
  0 & 0 & E & 0 \\
  0 & {}^ty & 0 & -a
\end{array}
\right)\left|
\begin{array}{c}
  D\in \mathfrak{o}(k)   \\
  E\in \mathfrak{o}(n-k)   \\
  x,y\in\erre^k  \\
  a\in\erre
\end{array}
\right.}
\end{equation}
and it is obtained at any other point by left translation because of the left invariance of $\Phi$. In particular, its maximal integral submanifold passing through the identity is the following subgroup of $\mob(n)$:
\begin{equation*}
\begin{array}{c}
T=\set{
\left(
\begin{array}{cccc}
  a & {}^tz & 0 & b \\
  x & A  & 0 & y \\
  0 & 0 & B & 0 \\
  c & {}^tw & 0 & d
\end{array}
\right)\left| \left(
\begin{array}{ccc}
  a & {}^tz  & b \\
  x & A   & Y \\
  c & {}^tw & d
\end{array}
\right)\right. \in\mob(k),\ B\in SO(n-k)
 }\simeq\\[1cm]
 \simeq\mob(k)\times SO(n-k).
\end{array}
\end{equation*}
We denote by $\tau,\xi$ the projections
$$
\tau: T\ra \mob(k), \qquad \xi:T\ra SO(n-k).
$$
For every other leaf $\Sigma$ of the distribution, there exists a
constant element $G\in \mob(n)$ such that $L_G(\Sigma) = T$.
Moreover, the intersection $T\cap\mob(n)_0$ is isomorphic to
$\mob(k)_0 \times SO(n-k)$. This implies that the following
diagram commutes, where $\pi_n,\pi_k$ are the projections that
define $\mathds{Q}_n$ and $\mathds{Q}_k$ and $[\tau\times \xi]$ is
the naturally defined quotient map:
\begin{equation}\label{diagramma}
\xymatrix{
  \Sigma\ar[r]^{L_G}_\sim \ar[d]^{\pi_n} & T  \ar[d]^{\pi_n} \ar[r]^(.25){\tau\times \xi}_(.25)\sim & \mob(k)\times SO(n-k) \ar[d]^{\pi_k\times 0 } \\
\pi_n(\Sigma) \ar[r]^{G\cdot}_\sim & \pi_n(T) \ar[r]^{[\tau\times
\xi]}_\sim & \mathds{Q}_k }
\end{equation}
From the diagram we deduce that each leaf $\pi_n(\Sigma)$ of the
distribution ${\pi_n}_*\Delta$ on $\mathds{Q}_n$ is conformally
equivalent to a conformal $k$-sphere. Let now $f$ be totally
$k$-degenerate. Then, in a frame $e$ of suitable order,
\begin{equation}\label{buono}
0=\phi^b_0=\phi^b_1=\phi^b_\eta=\phi^0_b,
\end{equation}
hence $e_*TI$ is a subset of the distribution $\Delta$. It follows that $e(I)\subset \Sigma$ for some leaf
$\Sigma$, hence $f(I)=(\pi_n\circ e)(I)\subset \pi_n(\Sigma)$.\\
Since $\tau\times \xi$ is a Lie group isomorphism, it is easy to
check that the conformal invariants (as well as every other
conformal property) of $f$ seen as a curve in $\mathds{Q}_k$ are
the same as those of $f$ seen as a totally $k$-degenerate curve in
$\mathds{Q}_n$. Up to a conformal motion $G$, we can thus assume
$e(I)\subset T$. Since $T$ leaves the Lorentzian $(k+2)$-subspace
$<\eta_0,\eta_1,\ldots,\eta_k,\eta_{n+1}>$ invariant and is
invertible, it follows that $<e_0,e_1,\ldots, e_k,e_{n+1}>\equiv
<\eta_0,\eta_1,\ldots, \eta_k,\eta_{n+1}>$ and projects to
$\mathds{Q}_k$.\\
Conversely, if $f(I)$ is a subset of some conformal $k$-sphere
(that is, the projectivization of some coset of $T$), up to a
conformal motion we can assume that $f(I)\subset \pi(T)$. Let
$p\in I$. If $\sigma:\mathds{Q}_k\ra T$ is a local section around
$f(p)$, then $e=\sigma\circ f$ is a local frame around $p$, whose
form $\phi=e^{-1}\di e$ trivially satisfies \eqref{buono}. This
shows that $f$ is totally $k$-degenerate.
\end{proof}
As Proposition \ref{casototalmdegenere} suggests, when $f$ is
$(k-1)$-generic we might expect there to be a unique $k$-sphere
whose contact with the curve is at least of order $k$; moreover,
the above proof hints that, for $k^{\text{th}}$ order frames, this
$k$-sphere be the one that comes from the linear subspace of
$\erre^{n+2}$ spanned by$\set{e_0,e_1,\ldots,e_k,e_{n+1}}$.
However, classically the osculating sphere of order $k$ is defined
as the projectivization of the following subspace:
$$
V_k(t) = < e_0(t),\dot{e}_0(t),\ddot{e}_0(t),\ldots, e_0^{(k+1)}(t)> \quad \forall \ 1\le k\le n,
$$
where the dot stands for usual derivation with respect to the
parameter $t$. From this definition, it is not even clear whether
this space has the right dimension or not. However, from $\di e =
e\phi$ for a $k^{\text{th}}$ order frame, it is easy to prove that
\begin{equation}\label{oscsphere}
V_k(t)\equiv <e_0(t), e_1(t), \ldots, e_k(t), e_{n+1}(t)>.
\end{equation}
The projectivization of the intersection of this Minkowski
subspace with the light cone will be called the \textbf{osculating
sphere of order $k$} at $f(t)$.\\
We recall here the Cartan-Darboux existence and uniqueness theorem
for curves, stating that an immersed curve is completely
characterized, up to a conformal diffeomorphism, by its conformal
invariants $\{\di s, \mu_1,\ldots,\mu_{n-1}\}$.
\begin{theorem}[\cite{sulsvec} Th.3.2, 4.2, 4.3; \cite{sharpe}, p.119]\label{rigidita}
Let $I\subset \erre$ be a compact interval, $s: I\rightarrow
\erre$ be a strictly increasing smooth function and $\mu_1,\dots,
\mu_{n-1}$, $n\ge 2$, be smooth functions on $I$ such that
\begin{equation}\label{nondegn}
\mu_k > 0 \quad \text{ at every point of } \ I, \qquad k \in\{2,\ldots,n-2\}.
\end{equation}
Then, up to a conformal motion of $\mathds{Q}_n$, there exists a
unique $(n-2)$-generic immersed curve $f:I\rightarrow
\mathds{Q}_n$ having $s$ (up to an additive constant) as the
conformal arclength, and $\mu_1,\ldots,\mu_{n-1}$ as given
conformal invariants.
\end{theorem}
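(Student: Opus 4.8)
The plan is to reverse the Frenet reduction: instead of extracting invariants from a given curve, I would assemble the prescribed data $\{s,\mu_1,\ldots,\mu_{n-1}\}$ into a $\mab(n)$-valued $1$-form on $I$, integrate it to a frame, and project to obtain the sought curve. First I would define on $I$ the $1$-form $\phi$ whose matrix entries are dictated by the Frenet-Serret equations \eqref{frenetserretn}: namely $\phi^1_0=\phi^0_2=\di s$, $\phi^0_1=\phi^1_{n+1}=\mu_1\di s$, $\phi^2_{n+1}=\di s$, and $\phi^{k+1}_k=\mu_k\di s$ for $2\le k\le n-1$, with all remaining entries obtained from the $\mab(n)$ symmetry relations $\Phi^{n+1}_{n+1}=-\Phi^0_0$, $\Phi^A_B=-\Phi^B_A$, $\Phi^A_{n+1}=\Phi^0_A$, $\Phi^{n+1}_B=\Phi^B_0$, and zero otherwise. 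A short check confirms that these prescriptions are mutually compatible, so that $\phi$ is genuinely $\mab(n)$-valued.

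The decisive point is that $I$ is one-dimensional, so every $2$-form on $I$ vanishes and the structure equation $\di\phi+\phi\wedge\phi=0$ holds automatically: there is no integrability condition to verify. Since $I$ is moreover connected and simply connected, I would invoke the Cartan-Darboux (fundamental) theorem for maps into a Lie group to obtain a smooth $e\colon I\to\mob(n)$, unique up to left multiplication by a fixed $G\in\mob(n)$, with $e^{-1}\di e=\phi$, i.e. $\di e=e\phi$; then I would set $f=\pi\circ e\colon I\to\mathds{Q}_n$.

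Next I would verify that $e$ is a Frenet frame along $f$ realizing the prescribed invariants, which amounts to matching the normalizations built into $\phi$ against the definitions of the successive reductions. From $\di e_0=\di s\,e_1$ and the strict monotonicity of $s$ (so that $\di s$ never vanishes) it follows that $f$ is an immersion and $\phi^1_0=\di s$ is nowhere zero; the vanishings $\phi^\alpha_0=0$, $\phi^\alpha_1=0$, $\phi^0_0=0$ and $\phi^b_2=0$ for $b\ge 4$ say exactly that $e$ is, in turn, a first, second, special third and fourth order frame, and inductively a Frenet frame. Since $\phi^0_2=\phi^1_0$ one reads off $p^2=1$ and $p^\alpha=0$ for $\alpha\ge 3$, so by \eqref{lungharco} the conformal arclength of $f$ equals the given $s$; the hypotheses $\mu_k>0$ ($2\le k\le n-2$) make each reduction step non-degenerate, so $f$ is $(n-2)$-generic, and comparing \eqref{frenetserretn} with \eqref{deflambda1}, \eqref{4red} and \eqref{polarizz} identifies the conformal invariants of $f$ as the prescribed $\mu_1,\ldots,\mu_{n-1}$.

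For uniqueness, I would take any $(n-2)$-generic curve $\tilde{f}$ carrying the same data. For such a curve the residual isotropy \eqref{Gk} with $k=n-1$ is trivial, so its Frenet frame $\tilde{e}$ is uniquely determined and its pulled-back Maurer-Cartan form equals the same $\phi$; the uniqueness clause of the Cartan-Darboux theorem on the connected $I$ then forces $\tilde{e}=Ge$ for a constant $G\in\mob(n)$, whence $\tilde{f}=\pi\circ\tilde{e}=G\cdot f$. I expect the only delicate part to be the bookkeeping of the third paragraph, i.e. checking that the algebraically defined $\phi$ reproduces, step by step, every genericity and normalization condition so that $e$ is honestly a Frenet frame; granting this, the result follows from the one-dimensionality of $I$ together with Cartan-Darboux.
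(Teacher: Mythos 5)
The paper does not actually prove this theorem: it is quoted from \cite{sulsvec} and \cite{sharpe}, so there is no internal proof to compare against. Your argument is the standard one and it is correct. On a one-dimensional base the structure equation $\di\phi+\phi\wedge\phi=0$ is vacuous, so the prescribed data can be packaged into a $\mab(n)$-valued form $\phi$ and integrated; here the ``Cartan--Darboux theorem'' is nothing more than existence and uniqueness for the linear ODE $\di e=e\phi$ on the compact interval $I$, the solution staying in $\mob(n)$ because $\phi$ is $\mab(n)$-valued. The bookkeeping you defer does go through: the constructed $\phi$ satisfies every normalization of the reduction ($\phi^\alpha_0=\phi^\alpha_1=\phi^0_0=0$, $\phi^0_2=\phi^1_0$, $\phi^b_{k-1}=0$ for $b>k$) and every genericity condition ($p^2=1$, $\mu_k>0$ for $2\le k\le n-2$), so $e$ is a Frenet frame and the invariants of $f=\pi\circ e$ are exactly the prescribed ones. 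The uniqueness step is also right: since the residual isotropy \eqref{Gk} at the last stage is $SO(1)=\{1\}$, the Frenet frame of any competitor curve with the same data is forced, pulls $\Phi$ back to the same $\phi$, and ODE uniqueness gives $\tilde e = Ge$ for a constant $G\in\mob(n)$. One small imprecision: strict monotonicity of a smooth $s$ does not by itself imply $\di s\neq 0$ (consider $s(t)=t^3$), and $\di s\neq 0$ is genuinely needed both for $f$ to be an immersion (since $\di e_0=\di s\, e_1$) and for $s$ to qualify as a conformal arclength; this, however, is a defect of the hypothesis as stated in the theorem rather than of your argument, whose intended reading is clearly $s'>0$ on $I$.
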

\section{Euclidean and conformal invariants of
curves}\label{sez_euclidea}
Let $F: I \subset \erren$ be a smooth curve. In this section, we
describe the links between Euclidean and conformal properties of
$F$. Through the Dirac-Weyl chart, we can view $\erren$ as an open
set of the conformal sphere $\mathds{Q}_n$. If we represent the
group of rigid motions $\mathds{E}(n)$ as a subgroup of $\mob(n)$
in the following way:
$$
J \ : \ (x,A) \in \mathds{E}(n) \longmapsto \left(\begin{array}{ccc} 1 & 0 & 0 \\
x & A & 0
\\ \frac 12 |x|^2 & ^txA & 1 \end{array} \right) \in \mob(n),
$$
$x\in\erren$, $A\in SO(n)$, then the following diagram commutes:
$$
\xymatrix{
& \mathds{E}(n) \ar[r]^(.4)J \ar[d]^{\pi_\mathds{E}} & \mob(n) \ar[d]^\pi \\
I \subset \erre \ar[r]^(.5)F & \erren \ar[r]^\chi & \mathds{Q}_n
}.
$$
where $\pi_\mathds{E}$ is the projection onto the first column. We
denote $f = \chi \circ F$. To every frame $E: I \ra \mathds{E}(n)$
along $F$, $E=(F,E_1,\ldots, E_n)$, we can associate $e= J \circ
E: I \ra \mob(n)$ along $f$. Moreover, up to the inclusion of Lie
algebras $J_{*,I_n}: \mathfrak{e}(n) \ra \mab(n)$, the
Maurer-Cartan form $\Psi$ of $\mathds{E}(n)$ is the pull-back of
$\Phi$ through $J$, so $\phi= E^*\Psi \equiv e^*\Phi$, which makes
the construction consistent. From now on, we identify
$\mathds{E}(n)$ with its image through $J$, the homogeneous space
$\pi_{\mathds{E}}: \mathds{E}(n) \ra \erren$ with $\pi:
J(\mathds{E}(n)) \ra \chi(\erren)$, $F$ with $f=\chi\circ F$ and
$E$ with $e=J\circ E$. We define $\mathds{E}(n)_0$ to be the
isotropy subgroup of $[\eta_0]=\chi(0)$ in $\mathds{E}(n)$. In
this framework, a frame reduction can be carried on and gives the
(Euclidean) Frenet frame along $f: I \ra \erren$. We very briefly
sketch the procedure, which is analogous to that performed in the
previous section. The first reduction gives a first order frame,
characterized by the condition $\phi^\alpha_0=0$, and the subgroup
of $\mathds{E}(n)_0$ preserving first order frames is
\begin{equation}\label{E1}
\mathds{E}(n)_1 = \set{ \left(
\begin{array}{ccc}
I_2 & 0 & 0\\
0 & B & 0 \\
0 & 0 & 1
\end{array}
\right) \left| B \in SO(n-1).\right. }
\end{equation}
First order frames are often called \textbf{(Euclidean) Darboux
frames}, and identify a well defined normal bundle $N_\mathds{E}$,
$N_{\mathds{E}}= < e_\beta >$, endowed with an inner product $( \,
, \, )$ (and induced norm $\|\cdot\|$) by requiring $\{e_\beta\}$
to be orthonormal, together with a compatible connection
$\nabla^e$ given by $\nabla^e e_\beta = \phi_\beta^\alpha \otimes
e_\alpha$. When $n=2$, $\mathds{E}(n)_1$ reduces to the identity
matrix, and
$N_{\mathds{E}}=<e_2>$.\\
With respect to first order frames, $\phi^1_0$ is globally defined
and never vanishing. This gives rise (up to an additive constant)
to the \textbf{Euclidean arclength} $s_e : I \ra \erre$ such that
$\phi^1_0= \di s_e$. Writing $\phi^\alpha_1 = h^\alpha \di s_e$,
the quantity $k = \sum_\alpha (h^\alpha)^2$ is independent of the
chosen Darboux frame, and is called the \textbf{curvature} of $f$.
It is easy to prove that $k\equiv 0$ if and only if the curve $f$
is a segment in $\erren$. Hereafter we make the non-degeneracy
assumption $k \neq 0$. Then, a further smooth reduction can be
made to have $h^2=k$ and, when $n\ge 3$, $h^b=0$ for $b\ge 3$. The
isotropy subgroup preserving such frames (\textbf{Euclidean second
order frames}) is
\begin{equation}\label{E1}
\mathds{E}(n)_2 = \set{ \left(
\begin{array}{cccc}
I_3 & 0 & 0\\
0 & B & 0 \\
0 & 0 & 1
\end{array}
\right) \left| B \in SO(n-2).\right. }
\end{equation}
Under a change $\widetilde{e}=eK$ of second order frames, $e_2$ is
invariant and defines the principal normal vector. Moreover, when
$n\ge 3$, setting
$$
v^b= \phi^b_2\left(\frac{\di}{\di s_e}\right), \qquad Y = v^be_b
\quad \text{for } 3\le b \le n
$$
it is immediate to show that $Y$ is globally defined and
independent of the chosen second order frame. Indeed, $Y= \nabla^e
e_2/\di s_e$. If $n=2$ we define $Y=0$ to avoid separating cases
in the next formulas. Define also
\begin{equation}\label{defZ}
Z = k' e_2 + kY = \frac{\nabla^e (ke_2)}{\di s_e}, \qquad
\text{where } k' = \frac{\di k}{\di s_e}.
\end{equation}
The subsequent steps can be carried on inductively. Observe that,
if $n=3$, $Y=\tau_2 e_3$ for some smooth, globally defined
function $\tau_2= \phi^3_2(\di/\di s_e)$, possibly changing sign;
$\tau_2$ (or just $\tau$, in this case) is called the
\textbf{torsion} of the curve. If $n\ge 4$, provided the
$2$-genericity condition $Y \neq 0$ is satisfied, a smooth
reduction can be made to have
$$
\phi^3_2 = \tau_2 \phi^1_0, \qquad \tau^b_2 = 0 \quad \text{for }
4 \le b \le n,
$$
namely, $Y=\tau_2e_3$ with $\tau_2>0$; the isotropy subgroup
preserving such frames is analogous to $\E(n)_2$, with $C\in
SO(n-3)$. Then we focus on the set of forms $\phi^b_3$, $b\ge 4$,
and so on. At every step $j$, the non-degeneracy condition reads
$\phi^b_{j-1}\neq 0$ for at least one index $b\ge j$, and it can
also be formulated as
$$
Y_{(j-1)} = \phi^b_{j-1}\left(\frac{\di}{\di s_e}\right)e_b \neq
0, \qquad j\le b\le n.
$$
The Euclidean invariants $\tau_3,\ldots, \tau_{n-1}$ are defined
by $Y_{(j)}=\tau_je_{j+1}$, $j\in\{3,\ldots,n-1\}$. In analogy
with Proposition \ref{casototalmdegenere}, $f$ is totally
$j$-degenerate ($Y_{(j)}\equiv 0$) if and only if $f(I)$ is a
subset of some affine $j$-subspace. Up to the identification
through the map $J$, the Euclidean Frenet-Serret equations $\di E
= E\phi$ for an $n$-generic curve $F$, $n\ge 3$, read
\begin{equation}
\left\{
\begin{array}{lcl}
\di F & = & \di s_e \, E_1 \\
\di E_1 & = & k \di s_e \, E_2 \\
\di E_2 & = & -k \di s_e \, E_1 + \tau_2 \di s_e \, E_3 \\
\di E_j & = & -\tau_{j-1} \di s_e \, E_{j-1} + \tau_j \di s_e \,
E_{j+1} \qquad 3 \le j \le n-1 \\
\di E_n & = & -\tau_{n-1}\di s_e\, E_{n-1}
\end{array}\right.
\end{equation}
In order to compare the Euclidean and conformal structures, we
observe that a Euclidean Darboux frame $e$ along a curve $f$ with
$k\neq 0$ is only a first order frame in the conformal sense. To
obtain a conformal second order frame ($\phi^\alpha_1=0$ for every
$\alpha$) we set $\widetilde{e} = eK$, where
$$
K = \left(\begin{array}{cccc} 1 & 0 & ^ty & \frac{1}{2}|y|^2 \\
0 & 1 & 0 & 0 \\
0 & 0 & I_{n-1} & y \\
0 & 0 & 0 & 1 \end{array} \right) \ \ ,  \qquad \begin{array}{l} y
= \left(\begin{array}{c}k \\0\end{array}\right) \in \erre^{n-1},
\quad n\ge 2,
\\[0.5cm]
y=k \quad \text{ if } \ n=2. \end{array}
$$
The forms change as follows:
\begin{equation}\label{primocamb}
\begin{array}{l}
\displaystyle \widetilde{\phi}^1_0 = \phi^1_0= \di s_e, \qquad
\widetilde{\phi}^\alpha_\beta = \phi^\alpha_\beta, \qquad
\widetilde{\phi}^0_1 = -\frac{k^2}{2} \phi^1_0,
\\[0.3cm] \widetilde{\phi}^0_2 = \di k = k'\phi^1_0, \qquad \widetilde{\phi}^0_b = k \phi^b_2 = kv^b\phi^1_0 \quad \text{for } \ b\ge 3.
\end{array}
\end{equation}
Then, from
$\widetilde{\phi}^0_\alpha=\widetilde{p}^\alpha\widetilde{\phi}^1_0$
we deduce $\widetilde{p}^2 = k'$, $\widetilde{p}^b = kv^b$. By
\eqref{defZ}, the curve is therefore $1$-generic in the conformal
sense if and only if
\begin{equation}\label{1gener}
\|Z\|^2 = (k')^2+ k^2\|Y\|^2 \neq 0 \qquad \text{on } I.
\end{equation}
Let $\hat{e}=\widetilde{e}K$, where
$$
K = \left(\begin{array}{cccc} r^{-1} & 0 & 0 & 0 \\
0 & 1 & 0 & 0 \\
0 & 0 & B & 0 \\
0 & 0 & 0 & r \end{array} \right) \in \mob(n)_2,
$$
for some $B\in SO(n-1)$, $r>0$. In order to obtain a third order
frame we must have $\hat{p}^2=1$, $\hat{p}^b=0$. From
\eqref{cambpalpha}, $1 = \sum_\alpha (\hat{p}^\alpha)^2 = r^4
\sum_\alpha (\widetilde{p}^\alpha)^2 = r^4\|Z\|^2$, hence
\begin{equation}\label{defr}
r = \frac{1}{\sqrt{\|Z\|}} = \left[ (k')^2+k^2\|Y\|^2
\right]^{-1/4}.
\end{equation}
Moreover, denoting by $B_\alpha$ the columns of $B$,
\eqref{cambpalpha} implies
\begin{equation}\label{coseB2}
B_2 = \left(\begin{array}{c} r^2k' \\ r^2kv^b \end{array}\right);
\end{equation}
roughly speaking, $B_2$ has the components of $Z/\|Z\|$. Using
also \eqref{primocamb}, the change of gauge gives:
\begin{equation}\label{secondocam}
\begin{array}{l}
\hat{\phi}^0_0 = -\di \log r, \qquad \hat{\phi}^0_1 =
r\widetilde{\phi}^0_1 =
-r\frac{k^2}{2} \di s_e, \\[0.2cm]
\hat{\phi}^1_0 = r^{-1}\widetilde{\phi}^1_0 = r^{-1} \di s_e,
\qquad (\hat{\phi}^\alpha_\beta) =
{}^t\!B(\widetilde{\phi}^\alpha_\beta)B + {}^t\!B\di B=
{}^t\!B(\phi^\alpha_\beta)B + {}^t\!B\di B.
\end{array}
\end{equation}
Since, for third order frames, $\hat{\phi}^1_0$ is the
differential of the conformal arclength, by \eqref{defr}
\begin{equation}\label{relazarchi}
\di s = \sqrt{\|Z\|} \, \di s_e = \left[ (k')^2+k^2\|Y\|^2
\right]^{1/4} \di s_e;
\end{equation}
If $n=2$, the above formula gives $\di s = \sqrt{|k'|} \di s_e$.
This relation shows that the classical definition of a vertex of a
closed plane curve (that is, a stationary point of the Euclidean
curvature) indeed reflects a conformal property, and coincides
with Definition \ref{defnondegen}; $\di s = \sqrt{|k'|} \di s_e$
first appeared in \cite{liebmann}, although the name of G. Pick is
also mentioned in \cite{blaschke} (see also
\cite{cairnssharpe} and \cite{bushmanova}).\\
If $n=3$, $\|Y\|^2 = \tau^2$, and \eqref{relazarchi} is again
classical and well known (see \cite{liebmann}, \cite{sul},
\cite{takasu}, \cite{vessiot}). When $n>3$ and $f$ is $2$-generic
in Euclidean sense, $Y=\tau_2e_3$ and $\di s = \left[
(k')^2+k^2\tau_2^2 \right]^{1/4} \di s_e$, see \cite{verbitskii}.\\
By \eqref{definqtilde}, we get
\begin{equation}\label{defq2}
\hat{q}^2 = -r\frac{\di \log r}{\di s_e} = -\frac{\di r}{\di s_e}.
\end{equation}
A special third order frame ($\overline{\phi}^0_0=0$) is thus
constructed by setting $\overline{e}=\hat{e}K$, with
$$
K = \left(\begin{array}{cccc} 1 & \hat{q}^2 & 0 & \hat{q}^2/2 \\
0 & 1 & 0 & \hat{q}^2 \\
0 & 0 & I_{n-1} & 0 \\
0 & 0 & 0 & 1 \end{array} \right) \in \mob(n)_3.
$$
This gives
\begin{equation}\label{terzocam}
(\overline{\phi}^\alpha_\beta)= (\hat{\phi}^\alpha_\beta), \qquad
\overline{\phi}^0_1 = \hat{\phi}^0_1 + \hat{q}^2\hat{\phi}^0_0 -
\frac 12 (\hat{q}^2)^2 \hat{\phi}^1_0 + \di (\hat{q}^2).
\end{equation}
For later use, if $n\ge 3$ we search for a simple expression for
the set of forms $(\phi^b_2)$. By \eqref{primocamb},
\eqref{secondocam} and \eqref{terzocam}, and using \eqref{coseB2},
\eqref{defZ} we get
\begin{equation}\label{phib2}
\begin{array}{lcl}
\overline{\phi}^b_2 & = & \big({}^t\!B(\phi^\alpha_\beta)B + {}^t\!B\di
B\big)^b_2 \\[0.2cm]
& = & B^2_b\phi^2_cB^c_2 + B^c_b \phi^c_2 B^2_2 + B^c_b\phi^c_d
B^d_2 + B^2_b \di B^2_2 + B^c_b \di B^c_2 \\[0.2cm]
& = & B_b^2\left[-r^2k\|Y\|^2 + (r^2k')'\right] \di s_e +
\\[0.2cm]
& & B^c_b\left[r^2k'v^c + r^2 k \phi^c_d\left(\frac{\di}{\di
s_e}\right)v^d + (r^2kv^c)'\right] \di s_e \\[0.3cm]
& = & \displaystyle (B_b^\alpha)
\left(\frac{\nabla^e(Z/\|Z\|)}{\di s_e}\right)^\alpha \di s_e
\end{array}
\end{equation}
%
%
%
%
From now on, we assume $f$ to be $(n-2)$-generic, so that a
complete frame reduction can be provided, giving the set of
invariants $\{\mu_1,\mu_2,\ldots, \mu_{n-1}\}$, possibly with
$\mu_{n-1}=0$ somewhere. It follows that $f$ is also generic from
the Euclidean point of view, whence we can also define the
Euclidean invariants $\{k,\tau_2,\ldots, \tau_{n-1}\}$. We aim to
relate such sets of invariants. By \eqref{deflambda1} and using
\eqref{secondocam} and \eqref{defq2} we obtain
\begin{equation}\label{cosemu1}
\mu_1 = \frac 12 \left(\frac{\di r}{\di s_e}\right)^2
-\frac{r^2k^2}{2} - r \frac{\di^2r}{\di s_e^2}, \qquad r=\left[(k')^2+k^2\tau_2^2\right]^{-1/4}.
\end{equation}
As we shall see, the general expression for $\mu_j$ is quite
complicated. In \cite{romerofuster} the authors have given a
simple expression for some conformal invariants called $\{k_j\}$,
$2\le j\le n-1$, in terms of the curvature radii $\{r_j\}$ of the
osculating spheres. These rise from a generalization of Coxeter's
inversive distance (see also \cite{sul3}). However, the authors do
not relate them to the set $\{\mu_j\}$. By the results of the
previous section, it is easy to show that the two sets indeed
coincide. Before doing this, it is convenient to recall their
procedure. A pair $S,\widetilde{S}$ of $j$-spheres in $\essen$ (or
$\erren$) are viewed in $\mathds{Q}_n$ as the projectivization of
suitable Lorentzian $(j+2)$-subspaces $V,\widetilde{V}$, which can
be explicitly computed. The orthogonal projections $\pi:
\erre^{n+2}\ra V$, $\widetilde{\pi}: \erre^{n+2}\ra \widetilde{V}$
with respect to the Lorentzian product $\metric$ are well defined,
self-adjoint operators. Setting
$$
I = \pi(\widetilde{V})\subset V, \quad \widetilde{I} = \widetilde{\pi}(V)\subset \widetilde{V},
\quad J= \ker(\widetilde{\pi}_{|V}) \subset V, \quad \widetilde{J}= \ker(\pi_{|\widetilde{V}})
\subset \widetilde{V},
$$
then $\pi: \widetilde{I}\ra I$, $\widetilde{\pi}: \widetilde{I}
\ra I$ are isomorphisms, $p_1=\pi\circ\widetilde{\pi}_{|I}$,
$p_2=\widetilde{\pi}\circ\pi_{|\widetilde{I}}$ are conjugate to
each other, hence they have the same characteristic polynomial.
Clearly, such polynomial is invariant under the action of the
M\"obius group. In particular, its trace $T_j$ is a conformal
invariant of the pair $(S,\widetilde{S})$. If $\set{w_0,\ldots,
w_{j+1}}$, (resp. $\set{\tilde{w}_0,\ldots,\tilde{w}_{j+1}}$) are
orthonormal bases which diagonalize the Minkowski inner product
$\metric$ and $\langle w_0,w_0\rangle=-1$ (resp. $\langle
\widetilde{w}_0,\widetilde{w}_0\rangle=-1$), $T_j$ is given by
$$
T_j(S,\widetilde{S})= \langle \widetilde{w}_0,w_0\rangle^2 - \sum_{i=1}^{j+1} \langle \widetilde{w}_0,w_i\rangle^2
-\sum_{i=1}^{j+1} \langle w_0,\widetilde{w}_i\rangle^2 + \sum_{i,k=1}^{j+1} \langle w_i,\widetilde{w}_k\rangle^2
$$
Now, once a generic curve $f:I\ra \mathds{Q}_n$ is given, we can
associate to each $s\in I$ the Minkovski $(j+2)$-space $V_j(s)$
giving rise to the osculating $j$-sphere \eqref{oscsphere}.
Setting $T_j(s,h)=T_j(V_j(s+h),V_j(s))$ and evaluating the Taylor
polynomial in $h$ around $0$, the authors in \cite{romerofuster}
define the conformal invariants $k_j$ as
\begin{equation}\label{defink}
k_j(s) = \sqrt{-\frac 12 \left.\frac{\partial^2T_j}{\partial h^2}\right|_{h=0}} \qquad 2\le j\le n-1,
\end{equation}
and they show (\cite{romerofuster}, p.381 Corollary 5) that
\begin{equation}\label{teorome}
k_j(s)\equiv \frac{r_{j-1}(s)r_{j+1}(s)\tau_{j+1}(s)}{r_j(s)^2\left[(k'(s))^2+k^2(s)\tau_2(s)^2\right]^{1/4}}.
\end{equation}
Here we prove
\begin{proposition}\label{rome}
For every $2\le j \le n-1$, the conformal invariant $\mu_j>0$ is
given by
$$
\mu_j(s)\equiv \frac{r_{j-1}(s)r_{j+1}(s)\tau_{j+1}(s)}{r_j(s)^2\left[(k'(s))^2+k^2(s)\tau_2(s)^2\right]^{1/4}},
$$
where $r_i$ is the radius of the osculating $i$-sphere.
\end{proposition}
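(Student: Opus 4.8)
The plan is to prove the equivalent statement $\mu_j\equiv k_j$, where $k_j$ is the invariant of \cite{romerofuster} introduced in \eqref{defink}; the Proposition then follows immediately from their formula \eqref{teorome}. First I would fix a Frenet frame $e=(e_0,e_1,\ldots,e_{n+1})$ along $f$, parametrized by the conformal arclength $s$. By \eqref{oscsphere} the osculating $j$-sphere corresponds to the Minkowski subspace $V_j(s)=<e_0,\ldots,e_j,e_{n+1}>$, and since $e$ takes values in $\mob(n)$ the columns satisfy $\pair{e_a,e_b}=S_{ab}$; in particular $e_0,e_{n+1}$ are null with $\pair{e_0,e_{n+1}}=-1$ and $e_1,\ldots,e_n$ are orthonormal spacelike. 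Hence $\set{w_0,w_1,\ldots,w_{j+1}}$ with $w_0=(e_0+e_{n+1})/\sqrt2$, $w_i=e_i$ for $1\le i\le j$ and $w_{j+1}=(e_0-e_{n+1})/\sqrt2$ is an orthonormal basis of $V_j$ diagonalizing $\metric$, with $w_0$ timelike and the rest spacelike; set $\epsilon_0=-1$ and $\epsilon_\mu=+1$ for $\mu\ge1$.

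The first step is to rewrite the trace invariant compactly. With this choice the defining expression for $T_j$ collapses to
\begin{equation*}
T_j(s,h)=\sum_{\mu,\nu=0}^{j+1}\epsilon_\mu\epsilon_\nu\,\pair{\tilde{w}_\mu,w_\nu}^2,
\end{equation*}
where $\tilde{w}_\mu$ is the analogous basis of $V_j(s+h)$ and $w_\nu$ that of $V_j(s)$ (only $\tilde{w}_\mu$ depends on $h$). I would then Taylor expand in $h$, the $s$-derivatives of the $e_a$, and hence of the $w_\mu$, being supplied by the Frenet--Serret equations \eqref{frenetserretn}. At $h=0$ one reads off $T_j(s,0)=j+2$; moreover, since $\pair{\dot w_\mu,w_\mu}=\tfrac12\,\di\pair{w_\mu,w_\mu}/\di s=0$, the first derivative $\partial_hT_j|_{h=0}$ vanishes identically, so that \eqref{defink} is legitimate and it remains to compute the Hessian.

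The core of the argument is the evaluation of $\partial_h^2T_j|_{h=0}$. Differentiating twice and using $\pair{\ddot w_\mu,w_\mu}=-\pair{\dot w_\mu,\dot w_\mu}$ together with the Lorentzian Parseval identity $\pair{v,v}=\sum_\nu\epsilon_\nu\pair{v,w_\nu}^2+\sum_{b\ge j+1}\pair{v,e_b}^2$ applied to $v=\dot w_\mu$, the ``tangential'' contributions cancel in pairs and one is left with
\begin{equation*}
\partial_h^2T_j\big|_{h=0}=-2\sum_\mu\epsilon_\mu\sum_{b\ge j+1}\pair{\dot w_\mu,e_b}^2.
\end{equation*}
Here the mechanism is transparent: inspecting \eqref{frenetserretn}, the only frame vector spanning $V_j$ whose derivative leaves $V_j$ is $e_j$, through the term $\mu_j\,\di s\,e_{j+1}$ (for $j=2$ this is the term $\mu_2\,\di s\,e_3$ in $\di e_2$), while every other $\dot w_\mu$ stays in $V_j$. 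Thus the only surviving summand is $\mu=j$, $b=j+1$, giving $\pair{\dot w_j,e_{j+1}}^2=\mu_j^2$ and $\partial_h^2T_j|_{h=0}=-2\mu_j^2$. Therefore $k_j=\sqrt{-\tfrac12\partial_h^2T_j|_{h=0}}=|\mu_j|=\mu_j$, and substituting into \eqref{teorome} produces the claimed expression.

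I expect the delicate points to be bookkeeping rather than conceptual: handling the null vectors $e_0,e_{n+1}$ correctly when building the orthonormal basis and its spacelike complement $e_{j+1},\ldots,e_n$, checking that the increment $h$ in \eqref{defink} is measured in conformal arclength (so that no Jacobian factor intervenes and the $\mu_j$ appear undistorted), and verifying that the slightly anomalous form of $\di e_2$ fits the same pattern as the generic $\di e_k$. The genericity hypotheses guarantee $\mu_j>0$ for $2\le j\le n-2$, so that $|\mu_j|=\mu_j$ throughout the stated range.
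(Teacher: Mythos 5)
Your proposal is correct and follows essentially the same route as the paper: the same orthonormal basis $w_0=(e_0+e_{n+1})/\sqrt2$, $w_i=e_i$, $w_{j+1}=(e_0-e_{n+1})/\sqrt2$, the vanishing of $\partial_h T_j|_{h=0}$, and the identification of $\mu_j e_{j+1}$ in $\dot w_j$ as the unique contribution to $\partial_h^2T_j|_{h=0}=-2\mu_j^2$, the paper's term-by-term cancellation being exactly your Lorentzian Parseval identity written out. The only cosmetic difference is your signature notation $\epsilon_\mu$, which packages the bookkeeping more compactly.
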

\begin{proof}
The following set $\{w_0,w_k,w_{j+1}\} \subset \erre^{n+2}$:
$$
w_0= \frac{e_0+e_{n+1}}{\sqrt{2}}, \qquad w_k=e_k \quad \text{for }1\le k \le j, \qquad
w_{j+1}=\frac{e_0-e_{n+1}}{\sqrt{2}},
$$
gives a basis for the osculating $j$-sphere which diagonalizes the
Lorentzian inner product.
Differentiating the expression for $T_j(s,h)$, using $\langle
\dot{w}_a,w_b\rangle=-\langle \dot{w}_b,w_a\rangle$, $\langle
\ddot{w}_a,w_a\rangle=-|\dot{w}_a|^2$, $0\le a\le n$ we get
$T_j(s,0)=j+2$ and
$$
\begin{array}{lcl}
\dfrac{\partial T_j}{\partial h}(s,0) & = &  2\langle w_0,w_0\rangle\langle\dot{w}_0,w_0\rangle
- 2\sum_{i=1}^{j+1} \langle w_0,w_i\rangle\Big(\langle \dot{w}_0,w_i\rangle \\[0.3cm]
& & + \langle w_0, \dot{w}_i\rangle\Big) +
2\sum_{i,k=1}^{j+1} \langle w_i, w_k\rangle \langle w_i,\dot{w}_k\rangle = 0 \\[0.3cm]
\dfrac{\partial^2 T_j}{\partial h^2}(s,0) & = & -2\langle\ddot{w}_0,w_0\rangle
- 2\sum_{i=1}^{j+1} \langle \dot{w}_0,w_i\rangle^2 + 2 \sum_{i=1}^{j+1} \langle \ddot{w}_i,w_i\rangle \\[0.3cm]
& & - 2\sum_{i=1}^{j+1} \langle w_0, \dot{w}_i\rangle^2 +
\sum_{i,k=1}^{j+1} 2 \langle w_i,\dot{w}_k\rangle^2 \\[0.3cm]
& = & 2|\dot{w}_0|^2
- 4\sum_{i=1}^{j+1} \langle \dot{w}_0,w_i\rangle^2 - 2 \sum_{i=1}^{j+1} |\dot{w}_i|^2 + \sum_{i, k}
2 \langle w_i,\dot{w}_k\rangle^2 \\[0.3cm]
& = & 2\Big(|\dot{w}_0|^2
- \sum_{i=1}^{j+1} \langle \dot{w}_0,w_i\rangle^2\Big) \\[0.3cm]
& & - 2 \sum_{i=1}^{j+1}\Big(
|\dot{w}_i|^2 +\langle \dot{w}_i,w_0\rangle^2 - \sum_{k=1}^{j+1} \langle \dot{w}_i,w_k\rangle^2\Big).
\end{array}
$$
From the Frenet-Serret equations \eqref{frenetserretn} we
immediately deduce that, except for the term involving
$\dot{w}_j$, each addendum of the two sums in the RHS is zero
since $\dot{w}_0, \dot{w}_i\in <w_0,w_1,\ldots, w_{j+1}>$, $2\le i
\le j-1$; from $\dot{w}_{j} = -\mu_{j-1} w_{j-1} + \mu_{j}e_{j+1}$
we conclude
$$
\dfrac{\partial^2 T_j}{\partial h^2}(s,0) = -2 \Big(
|\dot{w}_j|^2 +\langle \dot{w}_j,w_0\rangle^2 - \sum_{k=1}^{j+1} \langle \dot{w}_j,w_k\rangle^2\Big) = -2\mu_j^2
$$
Hence, by \eqref{defink}, $\mu_j\equiv k_j$ for every $2\le j\le
n-1$; putting this together with \eqref{teorome}, we conclude.
\end{proof}
With the aid of the previous constructions, we can supply a simple
proof of an interesting property already shown in \cite{CSWebb}
with different methods. Let $f$ be a closed $1$-generic curve in
$\mathds{Q}_3$. Then, we define the \textbf{total twist} of $f$ as
the normalized integral
$$
\mathrm{Tw}(f) = \frac{1}{2\pi} \int_I \tau \,\di s_e \qquad (\text{mod } \mathds{Z}).
$$
\begin{proposition}\label{tottwist}
Let $f: I \ra \mathds{Q}_3$ be a closed, $1$-generic curve. Then,
\begin{equation}\label{twist}
\frac{1}{2\pi}\int_I\mu_2 \,\di s \equiv \frac{1}{2\pi}\int_I \tau\,
\di s_e \qquad (\mathrm{mod} \; \mathds{Z}).
\end{equation}
Therefore, the total twist $\mathrm{Tw}(f)$ is a conformal
invariant.
\end{proposition}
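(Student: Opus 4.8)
The plan is to prove \eqref{twist} by exhibiting the difference of the two integrands as an exact form: I will show that
$$\mu_2\,\di s - \tau\,\di s_e = \di\psi$$
for a single smooth phase $\psi$ which is globally defined on the closed curve with values in $\erre/2\pi\mathds{Z}$. Granting this, integration over $I$ and division by $2\pi$ give the congruence, because the period $\oint_I\di\psi$ of a genuinely circle-valued $\psi$ is forced to lie in $2\pi\mathds{Z}$. Geometrically, $\psi$ will be the rotation angle, inside the oriented normal $2$-plane bundle, carrying the Euclidean Frenet normal frame $\{e_2,e_3\}$ to the conformal normal frame $\{B_2,B_3\}$ produced in Section \ref{sez_euclidea}, whose first leg is $B_2 = Z/\norm{Z}$ with $Z$ as in \eqref{defZ}.

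For the identity itself I would specialize \eqref{phib2} to $n=3$. Reading $\mu_2\,\di s = \overline{\phi}^3_2$ off the Frenet-Serret equations \eqref{frenetformulae}, \eqref{phib2} becomes
$$\mu_2\,\di s = (B^\alpha_3)\Big(\nabla^e(Z/\norm{Z})\Big)^\alpha = \big(B_3,\,\nabla^e(Z/\norm{Z})\big),$$
the inner product being that of $N_{\mathds{E}}$. Using \eqref{defZ} and \eqref{defr} one has exactly $B_2 = Z/\norm{Z} = \cos\psi\,e_2 + \sin\psi\,e_3$ with $\cos\psi = k'/\norm{Z}$ and $\sin\psi = k\tau/\norm{Z}$, so $\{B_2,B_3\}$ is the rotation of $\{e_2,e_3\}$ by $\psi$. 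The $SO(2)$-connection form of the normal connection $\nabla^e$ changes by the addition of $\di\psi$ under such a frame rotation, i.e. $\big(B_3,\nabla^e B_2\big) = \big(e_3,\nabla^e e_2\big) + \di\psi$; since $\big(e_3,\nabla^e e_2\big) = \tau\,\di s_e$ is precisely the Euclidean torsion form, the desired $\mu_2\,\di s = \tau\,\di s_e + \di\psi$ follows.

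It remains to see that $\psi$ is a bona fide circle-valued function on the closed curve. Here the $1$-genericity hypothesis is exactly what is needed: by \eqref{1gener} it forces $\norm{Z}^2 = (k')^2 + k^2\tau^2 \neq 0$ throughout $I$, so the pair $(k',k\tau)$ never vanishes and its argument $\psi$ is a well-defined smooth map $I\cong\esse^1\to\esse^1$; the standing assumption $k\neq 0$ keeps the Euclidean frame $\{e_2,e_3\}$ defined. A circle-valued map on a circle has an integer degree, whence $\oint_I\di\psi = 2\pi\deg(\psi)\in 2\pi\mathds{Z}$, and \eqref{twist} follows. Conformal invariance of $\mathrm{Tw}(f)$ is then automatic, the quantity $\tfrac{1}{2\pi}\int_I\mu_2\,\di s$ being built from conformal data alone.

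The only delicate point is the identification of the conformal connection form $\overline{\phi}^3_2$ with the rotated Euclidean normal connection form and the recognition of $\psi$ as the honest angle between two globally defined unit normal fields; the $e_0$-contributions produced along the reduction from Euclidean to conformal frames must be accounted for, but these have already been absorbed in the derivation of \eqref{phib2}, so invoking that formula sidesteps them. Everything else reduces to the elementary transformation law of $SO(2)$-connections and to the topological fact that closing the curve quantizes the total phase into $2\pi\mathds{Z}$.
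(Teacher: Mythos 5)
Your proposal is correct and follows essentially the same route as the paper: the paper also writes $Z/\norm{Z}=\cos\theta\,e_2+\sin\theta\,e_3$ for a globally defined angle $\theta$, chooses the rotation matrix $B$ accordingly, and computes from \eqref{phib2} that $\hat{\phi}^3_2=(\theta'+\tau)\,\di s_e$, i.e.\ $\mu_2\,\di s=\tau\,\di s_e+\di\theta$, before integrating over the closed curve. Your only additions are cosmetic: phrasing the computation as the $SO(2)$-connection transformation law, and spelling out that $1$-genericity makes $(k',k\tau)$ nonvanishing so that $\oint\di\theta\in 2\pi\mathds{Z}$ by a degree argument — a point the paper leaves implicit in ``globally defined $\theta$'' and ``the conclusion follows by integrating.''
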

\begin{proof}
We perform most of the proof for $f:I \ra\mathds{Q}_n$, $n\ge3$.
We assume $f$ to be generic. Then $Y=\tau_2e_3$ with respect to
the Euclidean frame $e$. Therefore, we can write the vector
$Z/\|Z\|$ in \eqref{defZ} as $\cos\theta e_2 + \sin\theta e_3$ for
some $\theta$ globally defined on $I$. Using \eqref{coseB2},
$^t(B^\alpha_2) = {}^t(\cos\theta,\sin\theta, 0)$, and a suitable
matrix $B$ satisfying \eqref{coseB2} is given by
$$
B = \left(\begin{array}{ccc} \cos\theta & -\sin\theta & 0 \\
\sin\theta & \cos\theta & 0 \\
0 & 0 & I_{n-3}\end{array}\right).
$$
A computation shows that
$$
\begin{array}{lcl}
\dfrac{\nabla^e(Z/\|Z\|)}{\di s_e} & = & \displaystyle -\theta'\sin\theta e_2 + \cos\theta \frac{\nabla^e e_2}{\di s_e}
+\theta'\cos\theta e_3 + \sin\theta \frac{\nabla^e e_3}{\di s_e}\\[0.3cm]
& = & \displaystyle -\sin\theta(\theta'+\tau_2)e_2 + \cos\theta(\theta'+\tau_2)e_3 + \tau_3\sin\theta e_4.
\end{array}
$$
and in \eqref{phib2}
\begin{equation*}
\hat{\phi}^b_2  =  \sq{-B_b^2 \sin\theta(\theta'+\tau_2) +
B_b^3 \cos\theta(\theta'+\tau_2)+ B_b^4\tau_3\sin\theta}\di s_e.
\end{equation*}
Therefore
\begin{equation*}
 \hat{\phi}^3_2 = (\theta'+\tau_2)\di s_e,\qquad \hat{\phi}^4_2 = \tau_3\sin\theta\di s_e, \qquad \hat{\phi}^c_2=0 \quad 5\le c\le n.
\end{equation*}
If $n=3$, then $\hat{\phi}^3_2=\mu_2\di s$ and the computation
above holds with the only requirement of $1$-genericity. Since $f$
is closed, the conclusion follows by integrating $\hat{\phi}^3_2$
over $I$.
\end{proof}
\begin{remark}
\emph{To the best of our knowledge, a first proof of the conformal
invariance of the total twist appeared in \cite{banwhite}. A
straightforward application of Proposition \ref{tottwist} is the
following: if $f$ is included in some $\mathds{Q}_2$, then
$(2\pi)^{-1}\int_I \tau \di s_e$ is an integer. Indeed, in this
case $\mu_2\equiv 0$. As a matter of fact, much more is true: a
surface $\Sigma\subset \erre^3$ is an Euclidean $2$-sphere if and
only if $\int_I\tau \di s_e = 0$ for every closed curve $f: I \ra
\Sigma$ (Scherrer theorem, \cite{scherrer}, subsequently
generalized in \cite{saban}). The ``only if'' part is known as
Fenchel-Jacobi theorem \cite{fencheljacobi}, and a different proof
of it appears also in (\cite{rogen1}, Corollary 21).
Fenchel-Jacobi theorem has been generalized in \cite{white1} for
embedded manifolds $M^n\ra \esse^{2n} \subset \erre^{2n+1}$ when
$n$ is odd (Theorem 10).}
\end{remark}
%
%
%
%
%
%
%
%
\section{The Euler-Lagrange equations for the conformal arclength}\label{sezionearco}
Let $f:I\subset \erre \ra \mathds{Q}_n$ be an immersion. In the
previous section we defined a natural parameter $s$, called
conformal arclength, for second order frames along $f$. If
$\mathcal{D} \subset \subset I$ is a compact subinterval, the
\textbf{conformal arclength functional} on $\mathcal{D}$ is the
integral
\[
G_\mathcal{D}(f) = \int_\mathcal{D} \di s = \int_\mathcal{D} \sqrt[4]{\sum_\alpha \pa{p^\alpha}^2}\phi^1_0.
\]
We shall study the extremal points of the conformal arclength functional, which we shall call \textbf{conformal geodesics}.\\
To this end we first need to introduce the concept of admissible
variation, that is a smooth map $v : I \times (-\eps, \eps) \ra
\mathds{Q}_n$ such that
\begin{enumerate}
    \item[-] $f(\cdot, t) : I \ra \mathds{Q}_n$ is an
        immersion for every $t \in (-\eps, \eps)$;
    \item[-] $v(\cdot,0)=f$;
    \item[-] $v(x,t) = f(x)$ for $x$ outside a compact set $K
        \subset I$ and $t\in (-\eps,\eps)$.
\end{enumerate}
Let $i_t$ denote the standard inclusion $s\in I\ra (s,t)\in I
\times (\eps,\eps)$, and for simplicity write $f_t$ instead of $v(\cdot,t)$.\\
Since $\di s$ is only $C^{0,1/2}$ around $1$-degenerate points, to
avoid problems we assume $f$ to be globally $1$-generic. Since
$n\ge 3$, this assumption is not restrictive.
Up to choosing $\eps$ sufficiently small, we can assume that every curve $f_t$ is $1$-generic. \\
We also need to consider frames with good properties along every
immersion $f_t$. Therefore we define a \textbf{special third order
frame along an admissible variation} $v$ as a frame $e: U \times
(-\eps,\eps) \ra \mathds{Q}_n$, where $U\subset I$ is an open set,
such that:
\begin{itemize}
\item[-] $e$ is a frame along $v$, that is, $\pi\circ e = v$;
\item[-] $e(x,t)= e(x,0)$ for every $x\in U\setminus K$, $t\in (-\eps,\eps)$;
\item[-] $e_t= e(\cdot,t)$ is a special third order frame along $f_t$.
\end{itemize}
For every $(p,t)\in U\times (-\eps,\eps)$, the Maurer-Cartan form $\phi= e^*\Phi$ can be decomposed as
\begin{equation}\label{spezzphip}
\phi_{(p,t)} = \phi(t) + \Lambda(p,t)\di t,
\end{equation}
where $\phi(t)$ is just $e_t^*\Phi$ and satisfies $\phi(t)\left(\frac{\partial}{\partial t}\right) = 0$, while $\Lambda(p,t)$ is a smooth $\mab(n)$-valued map. With respect to special third order frames along $v$, $\phi$ satisfies
\begin{align}
&\phi^0_0 = \Lambda_0^0 \di t;\qquad \phi^\alpha_0 = \Lambda_0^\alpha \di t;\label{3.3}\\
\nonumber&\phi^\alpha_1 = \Lambda_1^\alpha \di t;\qquad\phi^0_b = \Lambda^0_b \di t;\\
\nonumber&\phi^0_1 = \mu_1\phi^1_0(t)+\Lambda^0_1 \di t=\mu_1\phi^1_0 + (\Lambda^0_1 - \mu_1\Lambda^1_0)\di t;\\
\nonumber&\phi^0_2 = \phi^1_0(t) + \Lambda^0_2 \di t = \phi^1_0 + (\Lambda^0_2 - \Lambda^1_0)\di t;\\
&\phi^b_2=q^b\phi^1_0(t)+\Lambda^b_2\di t=q^b\phi^1_0+(\Lambda^b_2-q^b\Lambda^1_0)\di t,\nonumber
\end{align}
where we have fixed the index convention $a,b,c,d \in \{3,\ldots, n\}$. For convenience, we set
\begin{align}
&\lambda^0_0=\Lambda^0_0, \qquad\lambda^\alpha_0= \Lambda^\alpha_0, \label{notaz}\\
&\lambda^\alpha_1= \Lambda^\alpha_1,\qquad \lambda^0_2=
\Lambda^0_2 - \Lambda^1_0, \nonumber\\
&\lambda^0_b=\Lambda^0_b, \qquad \lambda^b_2=\Lambda^b_2-q^b\Lambda^1_0\nonumber.
\end{align}
Although, in literature, the possibility of constructing
variations $v$ and special type of frames $e$ with arbitrary
initial data $\{\lambda^\alpha_0(p)\}$ is widely used without
proof and is a standard fact, we have found no accessible and
complete proof. The question is not straightforward since,
sometimes, locally defined frames are used to compute variations
of functionals whose support may be, a priori, not contained in
that of the frame; of course, one can take collections
$\{\lambda^\alpha_0\}$ with arbitrarily small support when
computing the Euler-Lagrange equations, but since the frame
depends on the variation and this latter on the set
$\{\lambda^\alpha_0\}$, the question whether we can assume to have
the support of the variation contained in that of the frame raises
a doubt. This is important when integrating, because we need to be
sure that the exact terms vanish by Stokes' theorem. If the
support of the variation is not contained in that of the frame,
one should show that the exact terms appearing are the
differential of \emph{globally defined objects}. Checking the
invariance of such expressions is often a lengthy and very
complicate computation, although straightforward, so a different
strategy must be used. A complete proof of the proposition below
is not difficult but requires some care, and we postpone it to the
Appendix.
\begin{proposition}\label{existvariation}
For every $p\in I$ there exists an open neighbourhood $U$ of $p$
such that the following holds: for every collection of $(n-1)$
smooth functions $\lambda^\alpha \in C^\infty(I)$ with compact
support $C$ included in $U$, there exist $\eps$ sufficiently
small, a variation $v: I \times (-\eps,\eps)\ra \mathds{Q}_n$ and
a special third order frame $e: U \times (-\eps,\eps)\ra \mob(n)$
along $v$ such that $\lambda^\alpha_0(p,0)= \lambda^\alpha(p)$ for
every $p\in U$.
\end{proposition}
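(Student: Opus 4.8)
The plan is to build first the variation $v$, and then, along it, a smooth family of special third order frames carrying the prescribed infinitesimal data; the whole difficulty lies in reconciling smoothness, the initial condition at $t=0$, and the support condition. Since $f$ is $1$-generic, the reduction of Section \ref{sez_frenet} furnishes a special third order frame $\bar e$ of $f$ on some neighbourhood $U$ of $p$; let $\bar e_a$ be its columns, so that the classes $[\bar e_2],\dots,[\bar e_n]$ span the normal part of $T_{f}\mathds{Q}_n$. Given $\lambda^\alpha\in C^\infty(I)$ with $\supp\lambda^\alpha\subset C\subset\subset U$, I would set $W=\lambda^\alpha\bar e_\alpha$ (a normal field along $f$, vanishing off $C$) and define $v(s,t)=\exp_{f(s)}(tW(s))$ for a fixed smooth spray on $\mathds{Q}_n$. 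Then $v(\cdot,0)=f$, $v(s,t)=f(s)$ for $s\notin C$, and $\partial_t v(\cdot,0)=W$. Being an immersion and being $1$-generic are open conditions that hold on the compact set $\bar C$ at $t=0$ (and trivially off $C$), so for $\eps$ small every $f_t=v(\cdot,t)$ is a $1$-generic immersion and $v$ is an admissible variation defined on $I\times(-\eps,\eps)$.

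Next I would construct the frame. Applying the normalizations of Section \ref{sez_frenet} to the smooth family of jets of $v$ — each step being a smooth operation once $\sum_\alpha(p^\alpha)^2\neq 0$, which holds by $1$-genericity — produces a special third order frame $\hat e$ depending smoothly on $(s,t)$; since $U\times(-\eps,\eps)$ is contractible there is no obstruction to such a smooth choice. Any two special third order frames along $v$ differ by the residual gauge $K\colon U\times(-\eps,\eps)\to SO(n-2)$ of \eqref{G3spec} acting on $\hat e_3,\dots,\hat e_n$, so I would pin this freedom by passing to a \emph{temporal gauge}: seek $e=\hat e\,K$ with the $\mathfrak{so}(n-2)$-part of $\Lambda=e^{-1}\partial_t e$ identically zero and $e(\cdot,0)=\bar e$. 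Writing $\Lambda=K^{-1}\hat\Lambda K+K^{-1}\partial_t K$ with $\hat\Lambda=\hat e^{-1}\partial_t\hat e$, this amounts to the linear $t$-ODE $K^{-1}\partial_t K=-(K^{-1}\hat\Lambda K)_{\mathfrak{so}(n-2)}$, with initial datum the $SO(n-2)$-element relating $\hat e(\cdot,0)$ to $\bar e$; it has a unique smooth solution, which determines $e$. The support condition then comes out for free: off $C$ the curve does not move in $t$, so there one may take $\hat e\equiv\bar e$, whence $\hat\Lambda=0$, the ODE gives $K\equiv\mathrm{id}$, and $e(s,t)=\bar e(s)$ for $s\notin C$.

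Finally I would check the prescribed data. From $\partial_t e_0=(\partial_t e)\eta_0=e(\Lambda\eta_0)$ and the fact that the $(n+1,0)$-entry of $\mab(n)$ vanishes (so $\Lambda^{n+1}_0=0$), projecting modulo $e_0$ gives $\partial_t v=\Lambda^1_0[e_1]+\Lambda^\alpha_0[e_\alpha]$ in the basis $\{[e_A]\}$ of $T_{f_t}\mathds{Q}_n$. At $t=0$ we have $e=\bar e$ and $\partial_t v(\cdot,0)=W=\lambda^\alpha\bar e_\alpha$, which has no $\bar e_1$-component; hence $\Lambda^1_0(\cdot,0)=0$ and $\lambda^\alpha_0(\cdot,0)=\Lambda^\alpha_0(\cdot,0)=\lambda^\alpha$, as required.

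The hard part is the second step: the reduction of Section \ref{sez_frenet} is canonical only up to the $SO(n-2)$ gauge and does not by itself respect prescribed boundary values, so one must simultaneously (i) verify that it depends smoothly on the parameter $t$, (ii) globally fix the residual gauge, and (iii) make $e$ agree with $\bar e$ both at $t=0$ and outside the smaller set $C$. The temporal-gauge $t$-ODE is precisely the device that pins the residual freedom while propagating the datum $e(\cdot,0)=\bar e$; the care required is in checking that, off $C$, the canonical data are $t$-independent so the solution stays equal to $\bar e$ there. This last feature is what guarantees, in the subsequent Euler--Lagrange computation, that the exact terms arising from integration by parts are differentials of compactly supported, hence globally defined, objects, and therefore vanish by Stokes' theorem.
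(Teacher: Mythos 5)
Your proof is correct in substance but follows a genuinely different route from the paper's. The paper works in adapted coordinates: it writes $f$ locally as $x\mapsto(x,0)$, chooses a section of $\mob(n)\ra\mathds{Q}_n$ whose form $\psi^1_0$ is proportional to $\di w$, defines the variation explicitly as the graph deformation $z^\alpha(x,t)=t\,(B^{-1}\circ f)^\alpha_\beta\lambda^\beta$, and then redoes the entire frame reduction along the variation, tracking at each step how $\lambda^\alpha_0(\cdot,0)$ transforms under the change of gauge and restoring it to $\lambda^\alpha$ by a further rotation and homothety. You instead build the variation intrinsically as $v(s,t)=\exp_{f(s)}(tW(s))$ with variational field $W=\lambda^\alpha[\bar e_\alpha]$, and read off $\lambda^\alpha_0(\cdot,0)=\lambda^\alpha$ in one stroke from the identity $\partial_t v=\Lambda^A_0\,[e_A]$ (valid because $\Phi^{n+1}_0=0$ and $e_0$ projects to zero), provided $e(\cdot,0)=\bar e$; the residual $SO(n-2)$ freedom, the initial condition and the support condition are then all handled by your temporal-gauge ODE in $t$. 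Your verification step is cleaner and makes the geometric meaning of the $\lambda^\alpha_0$ transparent, at the price of invoking an auxiliary spray and a separate gauge-fixing argument; the paper's computation is more elementary and self-contained but must carry the datum through every reduction step by hand. Both arguments respect the key quantifier structure ($U$ fixed before the collection $\lambda^\alpha$, $\eps$ chosen afterwards).

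One point needs slightly more care: you cannot simply ``take $\hat e\equiv\bar e$ off $C$,'' since the globally constructed smooth $\hat e$ will in general differ from $\bar e$ there by a $t$-dependent $SO(n-2)$ rotation, and redefining it on an open set would break smoothness at $\partial C$. The correct statement --- which your own device delivers --- is that off $C$ the constant-in-$t$ frame $\bar e(s)$ is itself a special third order frame along $v$ satisfying the temporal-gauge condition with the right initial value; by uniqueness of solutions of the gauge-fixing ODE, the corrected frame $e=\hat e K$ therefore coincides with $\bar e$ off $C$, and the support condition follows. With that adjustment the argument is complete.
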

\begin{remark}
\emph{Observe that the key fact in the above proposition is that
we can take the same neighbourhood $U$ for every collection
$\lambda^\alpha$, although $\eps$ depends on the chosen
collection.}
\end{remark}
Define $s$ to be the conformal arclength parameter of the
immersion $f_0=f$, that is, $\di s =\phi^1_0(0)$, and set
$X_t=q^b(t)e_b(t)$. Write $X$ instead of $X_0$ for notational
convenience. We are ready to prove the following result:
\begin{theorem}\label{eullagrangecaso1}
When $n\ge3$, a $1$-generic immersion $f:I\ra \mathds{Q}_n$ is a
conformal geodesic if and only if the following Euler-Lagrange
equations are satisfied:
\begin{equation}\label{EuleroLagrangecurve}
\left\{ \begin{array}{ll}
\dfrac{\di\mu_1}{\di s} + \dfrac{3}{2} \dfrac{\di\abs{X}^2}{\di s} =0; \\[0.5cm]
\dfrac{\nabla^2X}{\di s^2} - X(\abs{X}^2+2\mu_1) = 0,
\end{array}\right.
\end{equation}
\end{theorem}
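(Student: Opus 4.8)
The plan is to compute the first variation of the conformal arclength functional
$$
G_\mathcal{D}(f_t) = \int_\mathcal{D} \phi^1_0(t)
$$
directly, differentiating under the integral sign with respect to $t$ and evaluating at $t=0$.  The key technical tool is the decomposition \eqref{spezzphip} of the pulled-back Maurer-Cartan form $\phi = e^*\Phi$ along an admissible variation into its ``spatial'' part $\phi(t)$ and the ``time'' part $\Lambda(p,t)\di t$, together with the structure equation $\di\Phi + \Phi\wedge\Phi = 0$, which the pullback $\phi$ inherits as $\di\phi + \phi\wedge\phi = 0$.  I would first extract from this structure equation, component by component, the evolution of $\phi^1_0(t)$ in the $t$-direction; concretely, comparing the $\di s\wedge\di t$ components in the relevant structure equation and using the normalizations \eqref{terzor} and the notation \eqref{notaz} for the $\lambda$'s, one obtains a formula for $\partial_t\big(\phi^1_0\big)$ in terms of $\di s$ and the $\lambda$-data of the variation.

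Once $\frac{\partial}{\partial t}\phi^1_0\big|_{t=0}$ is expressed as a combination of the $\lambda^\alpha_0$'s (and their $s$-derivatives, arising after using $\di\mathcal{I}\subset\mathcal{I}$-type structure relations), the first-variation formula becomes
$$
\left.\frac{\di}{\di t}\right|_{t=0} G_\mathcal{D}(f_t) = \int_\mathcal{D} L\big(\lambda^\alpha_0\big)\,\di s,
$$
where $L$ is a first-order differential operator in the $\lambda$'s.  At this point I would integrate by parts repeatedly, discarding the boundary terms because each $\lambda^\alpha_0$ has compact support inside $\mathcal{D}$ (this is exactly what Proposition \ref{existvariation} guarantees: the support of the variation sits inside the domain of the frame, so Stokes' theorem applies cleanly and no globally-defined correction terms are needed).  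The result is a pairing $\int_\mathcal{D} \langle \mathcal{E}, \lambda\rangle\,\di s$ between the $\lambda^\alpha_0$ and an expression $\mathcal{E}$ built from $\mu_1$, $X = q^b e_b$, and its covariant derivatives along $f$ with respect to the connection $\nabla$ on $\Theta$.  By the fundamental lemma of the calculus of variations, and the fact (Proposition \ref{existvariation}) that the initial data $\lambda^\alpha_0$ may be prescribed \emph{arbitrarily}, stationarity is equivalent to the vanishing of $\mathcal{E}$, which I expect to split into a scalar equation (the $\lambda^0_1$- or $\lambda^1_0$-coefficient) yielding the first equation of \eqref{EuleroLagrangecurve}, and a $\Theta$-valued equation (the $\lambda^b$-coefficients) yielding the second.

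The cleanest route to assembling the covariant expression $\nabla^2 X/\di s^2$ is to keep all computations frame-free wherever possible, using the reformulation of the reduction in Koszul language given just before \eqref{frenetserretn}: $X = X_{(2)}$ is the globally defined section $q^b e_b$, and its higher covariant derivatives along $f$ are frame-independent, so the Euler-Lagrange expression will automatically package itself as $\nabla^2X/\di s^2 - X(|X|^2 + 2\mu_1)$ rather than as an unenlightening list of scalar equations.  The main obstacle, as I see it, is bookkeeping: correctly tracking how the structure equations \eqref{eqstruttura} couple the $t$-derivative of $\phi^1_0$ to the other Maurer-Cartan components, and then recognizing, after the integration by parts, that the terms involving $q^b$ and $\phi^c_b$ reorganize into the intrinsic covariant derivatives of $X$ along $f$.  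A secondary subtlety is justifying that the variation and frame can be chosen compatibly so that all boundary contributions genuinely vanish; but this is precisely what has been isolated into Proposition \ref{existvariation}, so I would invoke it and concentrate the real effort on the differential-form computation and the recognition of the covariant structure.
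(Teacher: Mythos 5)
Your plan is correct and follows essentially the same route as the paper: compute $\frac{\di}{\di t}\big|_{t=0}\int\phi^1_0(t)$ via the decomposition $\phi=\phi(t)+\Lambda\,\di t$ and the structure equations (which reduce the integrand to $\lambda^0_0\phi^1_0$), integrate by parts using the compactly supported $\lambda$'s and the differentiated structure relations, invoke Proposition \ref{existvariation} for the arbitrariness of the $\lambda^\alpha_0$, and let the $q^b$ and $\phi^c_b$ terms reassemble into $\nabla^2X/\di s^2$. The only slip is a labelling one: the scalar equation arises as the coefficient of $\lambda^2_0$ (not of $\lambda^0_1$ or $\lambda^1_0$; indeed $\lambda^1_0=0$ by construction), while the $\lambda^b_0$, $b\ge 3$, yield the $\Theta$-valued equation.
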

\begin{proof} By the fundamental theorem of the calculus of variations, it is enough to consider variations $v$ with arbitrarily small support. Therefore, the above existence theorem applies and we can work with a global frame containing all the support of $v$. Moreover, since the variation is compactly supported in $\mathcal{D}$, in the decomposition
$$
\phi = \phi(t) + \Lambda \di t,
$$
the components of $\Lambda$ are compactly supported in $\mathcal{D}$.\\
Differentiating some of the equations in \eqref{3.3} and using \eqref{notaz}, the structure equations and Cartan's lemma, we get
\begin{eqnarray}
 \lambda_1^\alpha\phi^1_0 -\di\lambda_0^\alpha - \lambda_0^\beta\phi^\alpha_\beta + \lambda_0^\alpha\phi^0_0
  & = & f^\alpha \di t; \label{dilambdaalpha0}\\[0.2cm]
\lambda^0_\alpha\phi^1_0 - \di\lambda_1^\alpha - \lambda_1^\beta\phi^\alpha_\beta + \lambda_0^\alpha\phi^0_1
 & = & g^\alpha \di t; \label{dilambdaalpha1} \\[0.2cm]
 \di \lambda^0_2 +\lambda^0_b\phi^2_b -
\lambda_1^2\phi^0_1 + \lambda^0_2\phi^0_0& = & 2\lambda^0_0 \phi^1_0 + \eta^2 \di t, \label{dilambda02}\\[0.2cm]
\di \lambda^0_b +\lambda^0_2\phi^b_2 +\lambda^0_c\phi^b_c-
\lambda_1^b\phi^0_1 + \lambda^0_b\phi^0_0& = & \lambda^b_2 \phi^1_0 + \eta^b \di t, \label{dilambda0b}
\end{eqnarray}
for some smooth coefficients $f^\alpha$, $g^\alpha$, $\eta^\alpha$, compactly supported in $\mathcal{D}$. \\
Since $\mathcal{D}$ is relatively compact and
$\Omega$ is smooth on $I$, we get
\begin{equation}\label{primop}
\frac{d}{dt}\left.\!\!\!\frac{}{}\right|_{t=0}G_\mathcal{D}(f_t) = \int_\mathcal{D}
\pa{\mathcal{L}_{\frac{\partial}{\partial t}}\phi^1_0(t)}\left.\!\!\!\frac{}{}\right|_{t=0}= \int_\mathcal{D}
\pa{i_{\frac{\partial}{\partial t}} \di \phi^1_0(t) +\di \pa{i_{\frac{\partial}{\partial t}} \phi^1_0(t)}}\left.\!\!
\!\frac{}{}\right|_{t=0}
\end{equation}
where $|_{t=0}$ stands for $i_0^*$. Since
$\phi^1_0(t)\pa{\frac{\partial}{\partial t}}=0$,
\begin{equation}
\frac{d}{dt}\left.\!\!\!\frac{}{}\right|_{t=0}G_\mathcal{D}(f_t) = \int_\mathcal{D} \pa{i_{\frac{\partial}{\partial t}}
\di \phi^1_0(t)}\left.\!\!\!\frac{}{}\right|_{t=0}.
\end{equation}
Using $\phi^1_0(t)=\phi^1_0-\Lambda^1_0 \di t$ we compute
\begin{equation}
i_{\frac{\partial}{\partial t}} \pa{\di \phi^1_0(t)} =  i_{\frac{\partial}{\partial t}}\pa{\phi^0_0\wedge \phi^1_0-\di \Lambda^1_0 \wedge \di t}= i_{\frac{\partial}{\partial t}}\pa{\phi^0_0\wedge \phi^1_0}-\frac{\partial\Lambda^1_0}{\partial t}\, \di t + \di\Lambda^1_0
\end{equation}
and observe that the term with $\di t$ will vanish when restricted to $t=0$. Moreover, the differential $\di\Lambda^1_0\left.\!\!\!\frac{}{}\right|_{t=0}$ has compact support in $\mathcal{D}$, so its integral vanishes by Stokes' theorem. Therefore
\begin{equation*}
\frac{d}{dt}\left.\!\!\!\frac{}{}\right|_{t=0}G_\mathcal{D}(f_t) =
\int_\mathcal{D}\sq{i_{\frac{\partial}{\partial t}}\pa{\phi^0_0\wedge \phi^1_0}}\left.\!\!\!\frac{}{}\right|_{t=0}
=\int_\mathcal{D}\sq{\lambda^0_0\phi^1_0}_{t=0}.
\end{equation*}
Using \eqref{dilambda02}, the integrand becomes, at $t=0$,
\begin{equation*}
\lambda^0_0\phi^1_0=\frac12\pa{\di \lambda^0_2 +\lambda^0_b\phi^2_b -
\lambda_1^2\phi^0_1 + \lambda^0_2\phi^0_0}.
\end{equation*}
For ease of notation, we write $\omega \equiv \eta$ to mean that the form $\omega$ differs from $\eta$ by the differential of some compactly supported function, and we omit specifying the restriction to $t=0$ when it is clear from the context. \par
The process is now a simple integration by parts, where we get rid of the compactly supported exact forms as they appear:
$$
2\lambda^0_0\phi^1_0 \equiv \lambda^0_b\phi^2_b -
\lambda_1^2\phi^0_1 = -\lambda^0_b q^b\phi^1_0 - \lambda^2_1\phi^0_1 \qquad \text{at } t=0.
$$
Substituting \eqref{dilambdaalpha1}, \eqref{dilambdaalpha0} and integrating by parts, we obtain at $t=0$
\begin{align*}
2\lambda^0_0\phi^1_0 \equiv & -q^b\left( \di \lambda^b_1 +\lambda^2_1 \phi^b_2 + \lambda^c_1\phi^b_c - \lambda^b_0\phi^0_1\right)- \lambda^2_1\mu_1 \phi^1_0 \\
\equiv &\ \lambda^b_1 \di q^b - q^bq^b\lambda^2_1\phi^1_0 - q^b \lambda^c_1 \phi^b_c + q^b\lambda^b_0 \mu_1\phi^1_0- \mu_1\left(\di \lambda^2_0 + \lambda^b_0 \phi^2_b\right)\\
\equiv &\ \lambda^b_1 \di q^b - q^bq^b\left(\di \lambda^2_0 + \lambda^c_0 \phi^2_c\right) - q^b \lambda^c_1 \phi^b_c + q^b\lambda^b_0 \mu_1\phi^1_0\\
&+ \lambda^2_0 \di \mu_1 + \lambda^b_0 \mu_1q^b\phi^1_0\\
 \equiv & \ \lambda^b_1 \di q^b +\lambda^2_0 \di(q^bq^b) + (q^bq^b)\lambda^c_0 q^c\phi^1_0 -q^c \lambda^b_1 \phi^c_b \\
 &+ 2q^b\lambda^b_0 \mu_1\phi^1_0 + \lambda^2_0 \di \mu_1\\
 = &\ \lambda^b_1\left( \di q^b + q^c \phi^b_c\right) + \lambda^2_0 \di (q^bq^b) + (q^bq^b)q^c\lambda^c_0 \phi^1_0\\
 &+ 2q^b\lambda^b_0 \mu_1\phi^1_0 + \lambda^2_0 \di \mu_1.
\end{align*}
Since $\phi^1_0 = \di s$ at $t=0$, we can write $\di \mu_1 = \frac{\di\mu_1}{\di s} \phi^1_0$.
Moreover, recalling the definition of $X$, we have
$$
q^bq^b = \abs{X}^2, \qquad \di q^b + q^c \phi^b_c = \left(\nabla X\right)^b \phi^1_0.
$$
Thus we have
\begin{align*}
2\lambda^0_0\phi^1_0 \equiv &   \left(\nabla X\right)^b \left(\di \lambda^b_0 + \lambda^2_0 \phi^b_2 + \lambda^c_0 \phi^b_c\right)+ \lambda^2_0 \di \abs{X}^2  \\
& + \abs{X}^2 q^c\lambda^c_0 \phi^1_0+ 2q^b\lambda^b_0 \mu_1\phi^1_0 + \lambda^2_0 \dfrac{\di \mu_1}{\di s} \phi^1_0\\
 \equiv & -\lambda^b_0 \left(\di(\nabla X)^b + (\nabla X)^c \phi^b_c\right) + (\nabla X)^b q^b\lambda^2_0 \phi^1_0 + \lambda^2_0 \di \abs{X}^2 \\
& + \abs{X}^2 q^c\lambda^c_0 \phi^1_0+ 2q^b\lambda^b_0 \mu_1\phi^1_0 + \lambda^2_0 \dfrac{\di \mu_1}{\di s} \phi^1_0.
\end{align*}
Noting that
$$
\begin{array}{l}
\di \abs{X}^2 = \dfrac{\di \abs{X}^2}{\di s} \phi^1_0, \qquad
(\nabla X)^b q^b = \langle \nabla X,X\rangle = \frac{1}{2} \di \abs{X}^2, \\[0.3cm]
\di(\nabla X)^b + (\nabla X)^c \phi^b_c = (\nabla^2X)^b \phi^1_0,
\end{array}
$$
the RHS becomes
$$
\Big[\lambda^2_0 \Big( \frac{3}{2} \dfrac{\di \abs{X}^2}{\di s} + \dfrac{\di \mu_1}{\di s}\Big) +
\lambda^b_0\Big( - (\nabla^2X)^b + \abs{X}^2 q^b + 2q^b\mu_1\Big)\Big] \phi^1_0.
$$
By the arbitrariness of $\lambda^\alpha_0(p,0)$, and since
$$
\frac{\nabla^2X}{\di s^2} = (\nabla^2X)^b e_b,
$$
the Euler-Lagrange equations of the conformal geodesics are
\eqref{EuleroLagrangecurve}, as required.
\end{proof}
We now go deeper in investigating the solutions of equations
\eqref{EuleroLagrangecurve}. First, we observe that $\di e =
e\phi$ for a special third order frame applied to the vector field
$\frac{\di}{\di s}$ read
\begin{equation}\label{frenetdai}
\begin{array}{l}
\dot{e}_0  =  e_1; \qquad \dot{e}_1  =  \mu_1 e_0 + e_{n+1}; \qquad
\dot{e}_{n+1}  =  \mu_1 e_1 + e_2; \\[0.1cm]
\dot{e}_2  =  e_0 + X; \qquad
\dot{e}_b  =  - q^b e_2 + \phi^c_b\left(\frac{\di}{\di s}\right) e_c,
\end{array}
\end{equation}
where the dot denotes the derivative of the components with
respect to the parameter $s$. In other words, we consider the
vector bundle $W = <e_0,e_A,e_{n+1}>$ associate to the principal
bundle $\mob(n)\ra \mathds{Q}_n$, endowed with the Lorentzian
metric and we see $\Theta$ as a vector subbundle of $W$, with the
induced (Riemannian) metric and a compatible connection $\nabla$.
From the above equations we get
\begin{equation}\label{Xpunto e nablaX}
\dot{X} = \dot{q}^be_b + q^b \dot{e}_b= \left(\dot{q}^b + q^c \phi^b_c\left(\tfrac{\di}{\di s}\right)\right)e_b -
q^bq^b e_2 =
\frac{\nabla X}{\di s} - \abs{X}^2 e_2.
\end{equation}
Differentiating once more, we get
\begin{equation}\label{derivatesecondeX}
\dot{\left(\frac{\nabla X}{\di s}\right)} = \frac{\nabla^2X}{\di s^2} - \frac 12 \frac{\di}{\di s}\abs{X}^2 e_2.
\end{equation}
Define
\begin{equation}
V(s) = < e_0(s), e_1(s), e_2(s), e_{n+1}(s), X(s), \frac{\nabla X}{\di s}(s) >,
\end{equation}
and observe that $4 \le \dim V(s) \le 6$, and that $V(s)$ is a
Lorentzian subspace. We can prove the following result:
\begin{theorem}\label{dimensriduz}
Let $f:I\ra \mathds{Q}_n$ be a $1$-generic conformal geodesic.
Then, $V(s)$ is a time-like vector space independent of $s$, which
we call $V$. Moreover, $V$ identifies a conformal sphere of
dimension $\dim V-2$ containing the whole immersion $f$.
\end{theorem}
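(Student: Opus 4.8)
The plan is to show that the six generators of $V(s)$ span a subspace closed under $s$-differentiation, and then to upgrade this infinitesimal invariance to the genuine constancy of $V(s)$ through a linear ODE argument.

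First I would check closure under differentiation in $s$. The Frenet equations \eqref{frenetdai} give at once $\dot e_0 = e_1$, $\dot e_1 = \mu_1 e_0 + e_{n+1}$, $\dot e_2 = e_0 + X$ and $\dot e_{n+1} = \mu_1 e_1 + e_2$, all lying in $V(s)$, while \eqref{Xpunto e nablaX} gives $\dot X = \nabla X/\di s - \abs{X}^2 e_2 \in V(s)$. The only generator whose derivative is not manifestly in $V(s)$ is $\nabla X/\di s$: by \eqref{derivatesecondeX} its derivative involves $\nabla^2 X/\di s^2$, and this is exactly where the conformal geodesic hypothesis enters. Substituting the second Euler--Lagrange equation of \eqref{EuleroLagrangecurve}, namely $\nabla^2 X/\di s^2 = (\abs{X}^2 + 2\mu_1)X$, one obtains $\dot{\pa{\nabla X/\di s}} = (\abs{X}^2 + 2\mu_1)X - \tfrac12 \pa{\di\abs{X}^2/\di s}\,e_2 \in V(s)$. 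Hence every generator has its $s$-derivative expressed as a linear combination, with smooth coefficients, of the generators themselves.

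I would then package this as a matrix identity. Let $M(s)$ be the $(n+2)\times 6$ matrix whose columns are $e_0, e_1, e_2, e_{n+1}, X, \nabla X/\di s$. The previous step reads $\dot M = MB$ for an explicit smooth $6\times 6$ matrix $B(s)$. Letting $\Phi$ solve $\dot\Phi = -B\Phi$ with $\Phi(s_0)$ the identity, the product $M\Phi$ is constant, so $M(s) = M(s_0)\Phi(s)^{-1}$; since $\Phi(s)$ is invertible, right multiplication does not alter the column space, whence $V(s) = \operatorname{col}M(s) = \operatorname{col}M(s_0) = V(s_0)$ for every $s$. In particular $\dim V$ is constant and we may set $V := V(s)$. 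This ODE step is the crux of the argument: it is what controls the rank, since a priori $\dim V(s)$ could jump between $4$ and $6$, and the bare ``invariant under differentiation'' observation does not by itself rule this out.

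It remains to settle the two geometric assertions. Because $e$ takes values in $\mob(n)$, the Gram matrix of $(e_0,\ldots,e_{n+1})$ with respect to $\metric$ is precisely $S$; in particular $\langle e_0, e_{n+1}\rangle = -1$ and $\langle e_1, e_1\rangle = \langle e_2, e_2\rangle = 1$, so that $<e_0,e_1,e_2,e_{n+1}>$ is a $4$-dimensional Lorentzian subspace with exactly one negative direction. Since $X$ and $\nabla X/\di s$ lie in $\Theta = <e_3,\ldots,e_n>$, which is orthogonal to this block and positive definite, $V$ is the orthogonal sum of a Lorentzian $4$-space with a space-like subspace, hence time-like of signature $(1,\dim V - 1)$. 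Finally, the projectivization $\mathds{P}(V\cap L^+)$ of the light cone of the Lorentzian space $V$ is a totally umbilical conformal sphere of dimension $\dim V - 2$, exactly the Lorentzian-subspace/conformal-sphere correspondence established in Proposition \ref{casototalmdegenere}; and since $f(s) = [e_0(s)]$ with $e_0(s) \in V\cap L^+$ for all $s$, the whole image $f(I)$ is contained in it.
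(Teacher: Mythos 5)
Your proposal is correct and follows essentially the same route as the paper: both reduce the statement to the linear matrix ODE $\dot E = EA$ for the $(n+2)\times 6$ matrix of generators (using the second Euler--Lagrange equation to close the system at the $\nabla X/\di s$ column) and then invoke the linear ODE structure to conclude that the column space, hence $V(s)$, is constant. Your explicit use of the fundamental solution $\Phi$ to write $M(s)=M(s_0)\Phi(s)^{-1}$, and your verification of the Lorentzian signature of $V$, merely make precise two points the paper states more briefly.
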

\begin{proof}
Choose a special third order frame along $f$ and write $E(s)$ for
the $(n+2)\times 6$ matrix $\pa{e_0|e_1|e_2|X|\frac{\nabla X}{\di
s}| e_{n+1}}$. Note that $\mathrm{Rank}(E)\ge 4$. Since $f$ is a
conformal geodesic, from \eqref{derivatesecondeX} we obtain
\begin{equation}\label{derivatesecondeX2}
\dot{\left(\frac{\nabla X}{\di s}\right)} = X(\abs{X}^2+2\mu_1) - \frac 12 \frac{\di}{\di s}\abs{X}^2 e_2.
\end{equation}
Integrating the first Euler-Lagrange equation we get
\begin{equation}\label{integrprima}
\abs{X}^2 = -\frac 23 \mu_1 + C_1,
\end{equation}
for some constant $C_1\in \erre$. Using \eqref{frenetdai},
\eqref{Xpunto e nablaX}, and \eqref{integrprima} we can see that
$E(s)$ satisfies
\begin{equation}\label{sistlineare}
\dot{E}(s) = E(s)A(s), \qquad \text{where}
\end{equation}
\begin{equation}\label{deaA}
A(s) = \left(\begin{array}{cccccc}
0 & \mu_1 & 1 & 0 & 0 & 0 \\[0.1cm]
1 & 0 & 0 & 0 & 0 & \mu_1 \\[0.1cm]
0 & 0 & 0 & \frac 23 \mu_1 - C_1 & \frac 13 \frac{\di\mu_1}{\di s} & 1 \\[0.1cm]
0 & 0 & 1 & 0 & \frac 43 \mu_1 + C_1 & 0 \\[0.1cm]
0 & 0 & 0 & 1 & 0 & 0 \\[0.1cm]
0 & 1 & 0 & 0 & 0 & 0
\end{array}\right).
\end{equation}
Now, \eqref{sistlineare} is a linear system with matrix $A$
independent of $E$. By the existence-uniqueness theorem for linear
ODEs, the linear independence of the span of the columns of $E$ is
preserved, hence the rank of $E$ is constant along $I$. Moreover,
equation $\dot{E}(s)=E(s)A(s)$ implies that $V(s)$ is a vector
space $V$ independent of $s$. It follows that the intersection of
$V$ with the positive light cone projects to a conformal sphere of
dimension $2\le \dim V -2 \le 4$ containing $[e_0]$. This
concludes the proof.
\end{proof}
\begin{theorem}\label{teoreduz}
Every conformal geodesic $f: I \ra \mathds{Q}_n$ is included in
some conformal $4$-sphere $\mathds{Q}_4\subset \mathds{Q}_n$.
\end{theorem}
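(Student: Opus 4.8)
The plan is to read the statement off from Theorem~\ref{dimensriduz}, which already carries all the analytic content through its linear-ODE argument; what remains is only a short linear-algebra step that pads a possibly smaller conformal sphere up to a four-dimensional one.

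Throughout this section $f$ is assumed globally $1$-generic, so Theorem~\ref{dimensriduz} applies directly: the subspace $V=V(s)$ is time-like and independent of $s$, and $f(I)$ is contained in the conformal sphere $\mathds{Q}_{m}$ with $m=\dim V-2\in\{2,3,4\}$. If $m=4$ there is nothing left to prove, and likewise when $n=4$, since then $\mathds{Q}_4=\mathds{Q}_n$. I would then treat the case $m<4$ (so $n\ge 5$) as follows. As $V$ is time-like of signature $(m+1,1)$ inside the Lorentzian space $(\erre^{n+2},\metric)$ of signature $(n+1,1)$, the orthogonal decomposition $\erre^{n+2}=V\oplus V^\perp$ forces $V^\perp$ to be positive definite of dimension $n-m\ge 4-m>0$. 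I would choose $4-m$ pairwise orthogonal space-like unit vectors in $V^\perp$ and adjoin them to $V$, obtaining a six-dimensional time-like subspace $\tilde{V}\supset V$ of signature $(5,1)$. By the very construction of conformal spheres as projectivizations of the intersections of Lorentzian subspaces with $L^+$ (cf.\ the proof of Proposition~\ref{casototalmdegenere}), $\tilde{V}$ determines a totally umbilical conformal $4$-sphere $\mathds{Q}_4$, and it contains $\mathds{Q}_m\supseteq f(I)$, as required.

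I anticipate no genuine obstacle, precisely because Theorem~\ref{dimensriduz} furnishes the hard input: the constancy of $V$ and the a priori bound $\dim V\le 6$ extracted from the $6\times 6$ system $\dot{E}=EA$. The only point demanding care is the signature bookkeeping --- verifying that $V^\perp$ carries enough space-like directions to enlarge $V$ to dimension six --- and this is exactly the place where the hypothesis $n\ge 4$ is used.
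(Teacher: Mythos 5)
Your proposal is correct and follows the paper's own route: Theorem~\ref{teoreduz} is stated there as an immediate consequence of Theorem~\ref{dimensriduz}, whose conclusion that $V$ is a constant time-like subspace with $4\le\dim V\le 6$ does all the work. The only content you add --- explicitly enlarging $V$ to a six-dimensional time-like subspace by adjoining space-like directions from the positive-definite complement $V^\perp$ when $\dim V<6$ --- is exactly the padding step the paper leaves implicit, and your signature bookkeeping for it is sound.
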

The following theorem examines each of the three possible values
taken by $\dim V$.
\begin{theorem}\label{reductioncod}
Let $f:I \ra \mathds{Q}_n$ be a $1$-generic conformal geodesic.
\begin{itemize}
\item[$(i)$] if $\dim V=4$, then $f$ is a totally $2$-degenerate curve of constant curvature $\mu_1$;
\item[$(ii)$] if $\dim V=5$, then $f$ is a $2$-generic and totally
$3$-degenerate curve. Choosing a fourth order frame, the equations
satisfied by the two curvatures $\mu_1$ and $\mu_2$ are
\begin{equation}\label{eultre}
\left\{ \begin{array}{ll}
\dot{\mu}_1+ 3 \mu_2\dot{\mu}_2=0 \\[0.2cm]
\ddot{\mu}_2 = \mu_2^3 + 2\mu_1\mu_2,
\end{array}\right.
\end{equation}
where the dot denotes the derivative with respect to the arclength parameter;
\item[$(iii)$] if $\dim V =6$, then $f$ is a $3$-generic and
totally $4$-degenerate curve. For a fifth order frame, the
curvatures $\mu_1,\mu_2,\mu_3$ satisfy the system
\begin{equation}\label{eulquattro}
\left\{ \begin{array}{ll}
\dot{\mu}_1+ 3 \mu_2\dot{\mu}_2=0 \\[0.2cm]
\ddot{\mu}_2 = \mu_2^3 + 2\mu_1\mu_2 + \mu_2\mu_3^2\\[0.2cm]
2\dot{\mu}_2\mu_3 + \mu_2 \dot{\mu}_3 =0.
\end{array}\right.
\end{equation}
\end{itemize}
Conversely, condition $\mu_1=\mathrm{constant}$ characterizes
totally $2$-degenerate conformal geodesic, a solution
$\{\mu_1,\mu_2\}$ of \eqref{eultre} with $\mu_2\neq 0$ for every
$s\in I$ characterizes $2$-generic and totally $3$-degenerate
curves in $\mathds{Q}_3$ and a solution of \eqref{eulquattro} with
$\mu_2,\mu_3\neq 0$ for every $s\in I$ characterizes $3$-generic
conformal geodesic on $\mathds{Q}_4$.
\end{theorem}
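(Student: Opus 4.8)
The plan is to leverage Theorem~\ref{dimensriduz}, which exhibits $V$ as a fixed Lorentzian space with $4\le\dim V\le 6$, and to read the degeneracy type of $f$ off the single integer $\dim V$. The key starting remark is that $e_0,e_1,e_2,e_{n+1}$ are four linearly independent vectors spanning a nondegenerate $4$-plane, whereas both $X$ and $\frac{\nabla X}{\di s}$ lie in the spacelike bundle $\Theta$, which is orthogonal to that plane. Consequently, at every $s$,
\[
\dim V = 4 + \dim\langle X(s),\tfrac{\nabla X}{\di s}(s)\rangle,
\]
so the trichotomy $\dim V\in\{4,5,6\}$ corresponds exactly to $\dim\langle X,\frac{\nabla X}{\di s}\rangle\in\{0,1,2\}$. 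For $(i)$, $\dim V=4$ forces $X(s)=\frac{\nabla X}{\di s}(s)=0$ at every $s$, hence $X\equiv 0$ and $f$ is totally $2$-degenerate; then \eqref{integrprima} with $\abs{X}^2\equiv 0$ gives $\mu_1=\tfrac{3}{2}C_1$ constant.

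For $(ii)$ and $(iii)$ I would build an adapted frame and substitute into the vector Euler--Lagrange equations \eqref{EuleroLagrangecurve}. In case $(ii)$, $X$ and $\frac{\nabla X}{\di s}$ are everywhere proportional; on the open set where $X\neq 0$ I set $e_3=X/\abs{X}$ and $\mu_2=\abs{X}>0$, so $f$ is $2$-generic, and proportionality forces $\frac{\nabla e_3}{\di s}=0$, i.e. $X_{(3)}\equiv 0$ and $f$ totally $3$-degenerate (equivalently $f\subset\mathds{Q}_3$, as also follows from $\dim V=5$ through Proposition~\ref{casototalmdegenere}). Then $\frac{\nabla X}{\di s}=\dot\mu_2 e_3$ and $\frac{\nabla^2 X}{\di s^2}=\ddot\mu_2 e_3$; the second equation of \eqref{EuleroLagrangecurve} gives $\ddot\mu_2=\mu_2^3+2\mu_1\mu_2$, and the first, with $\abs{X}^2=\mu_2^2$, gives $\dot\mu_1+3\mu_2\dot\mu_2=0$, which is \eqref{eultre}. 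In case $(iii)$, $X$ and $\frac{\nabla X}{\di s}$ are linearly independent, so $f$ is $2$-generic and $3$-generic and $\dim V=6$ places $f$ in $\mathds{Q}_4$ (totally $4$-degenerate). With a fifth order frame $e_3=X/\abs{X}$, $e_4=X_{(3)}/\abs{X_{(3)}}$, $\mu_2=\abs{X}$, $\mu_3=\abs{X_{(3)}}$, the relations $\frac{\nabla e_3}{\di s}=\mu_3 e_4$, $\frac{\nabla e_4}{\di s}=-\mu_3 e_3$ drawn from \eqref{frenetformulae} yield
\[
\frac{\nabla^2 X}{\di s^2}=(\ddot\mu_2-\mu_2\mu_3^2)e_3+(2\dot\mu_2\mu_3+\mu_2\dot\mu_3)e_4;
\]
matching the $e_3$- and $e_4$-components in \eqref{EuleroLagrangecurve} gives the last two equations of \eqref{eulquattro}, the first being again the first equation of \eqref{EuleroLagrangecurve} with $\abs{X}^2=\mu_2^2$.

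For the converse, given invariant data with the stated positivity satisfying the corresponding system, the Cartan--Darboux theorem (Theorem~\ref{rigidita}) produces a unique curve realizing those conformal invariants and of the prescribed degeneracy type; reversing the computations above shows that \eqref{EuleroLagrangecurve} holds, so the curve is a conformal geodesic. I expect the main difficulty to be twofold: the careful bookkeeping of the connection $\nabla$ on $\Theta$ when expanding $\frac{\nabla^2 X}{\di s^2}$ in each adapted frame, and the genericity classification drawn from $\dim V$. In particular, establishing $2$-genericity in case $(ii)$ is delicate, since the scalar $\mu_2$ with $X=\mu_2 e_3$ may a priori vanish at isolated points even when $\dim V=5$, so one must argue on the open dense set $\{\mu_2\neq 0\}$ (the converse sidesteps this by assuming $\mu_2\neq 0$ throughout).
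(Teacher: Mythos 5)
Your proposal is correct and follows essentially the same route as the paper: fix $V$ via Theorem \ref{dimensriduz}, read the degeneracy type off $\dim V$ through the pointwise count $\dim V = 4+\dim\langle X,\tfrac{\nabla X}{\di s}\rangle$, compute $\tfrac{\nabla^2X}{\di s^2}$ in a fourth (resp.\ fifth) order frame, substitute into \eqref{EuleroLagrangecurve}, and invoke Cartan--Darboux for the converse. The $2$-genericity subtlety you flag in case $(ii)$ is genuine, but the paper's own proof simply asserts $X(t)\neq 0$ for every $t$ when $\dim V=5$ without further argument, so on this point you are, if anything, more careful than the original.
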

\begin{proof}
Observe that if $X(p)\neq 0$ for some $p$ then, by its very definition, $X$ is linearly independent of $e_0,e_1,e_2,e_{n+1}$. The same holds for $\nabla X/\di s$. By Proposition \ref{casototalmdegenere} and Theorem \ref{dimensriduz}, $\dim V = 4$ if and only if both $X$ and $\nabla X/\di s$ vanish identically. In this case, the first Euler-Lagrange equation becomes $\mu_1$ constant.\\
In case $\dim V=5$, then the curve is $2$-generic on the whole $I$ and totally $3$-degenerate, so
that $X(t)\neq 0$ for every $t\in I$. Taking a fourth order frame,
$$
X=\mu_2 e_3 \quad \text{with }  \mu_2> 0 \text{ on } I; \qquad
\dfrac{\nabla e_3}{\di s} = \phi^b_3\Big(\dfrac{\di}{\di s}\Big) e_b =0,
$$
where the last equality follows from the totally $3$-degeneracy
($\phi^b_3=0$). Differentiating, we obtain $\nabla^2 X/\di
s^2 = \ddot{\mu}_2 e_3$, and the pair of Euler-Lagrange equations \eqref{eultre} are readily obtained.\\
In case $\dim V=6$, the curve is $3$-generic but totally
$4$-degenerate ($\phi^c_4=0$ for every $c\ge 5$), so that in a
fifth order frame
$$
\begin{array}{l}
X=\mu_2 e_3 \quad \text{with }  \mu_2> 0 \text{ on } I;\\[0.2cm]
\dfrac{\nabla e_3}{\di s} = \phi^b_3\Big(\dfrac{\di}{\di s}\Big) e_b =\mu_3 e_4
\quad \text{with }  \mu_3> 0 \text{ on } I;\\[0.4cm]
\dfrac{\nabla e_4}{\di s} = \phi^3_4\Big(\dfrac{\di}{\di s}\Big) e_3 +
 \phi^c_4\Big(\dfrac{\di}{\di s}\Big) e_c =-\mu_3 e_3.
\end{array}
$$
Differentiating, we get
$$
\dfrac{\nabla^2X}{\di s^2} = \left[\ddot{\mu}_2-\mu_2\mu_3^2\right]e_3 +
\left[2\dot{\mu}_2\mu_3 + \mu_2 \dot{\mu}_3\right]e_4,
$$
from which formulas \eqref{eulquattro} follow at once. The converse is immediate and follows from the Cartan-Darboux rigidity theorem \ref{rigidita}.
\end{proof}
\section{Integration of the equations of
motion}\label{sez_equazionimoto}
First of all, we observe that in case the curve is totally
$3$-degenerate, \eqref{eultre} coincides with the system in
\cite{musso}. We therefore limit ourselves to considering the
$3$-generic case
\begin{equation}\label{eulquattro2}
\left\{ \begin{array}{ll}
\dot{\mu}_1+ 3 \mu_2\dot{\mu}_2=0 \\[0.2cm]
\ddot{\mu}_2 = \mu_2^3 + 2\mu_1\mu_2 + \mu_2\mu_3^2\\[0.2cm]
2\dot{\mu}_2\mu_3 + \mu_2 \dot{\mu}_3 =0
\end{array}\right.
\end{equation}
on a subset $I\subset \erre$. Integrating the first and third
equation, remembering that $\mu_2,\mu_3>0$ and substituting into
the second one we get
\begin{equation}\label{relazalgebrichecurv}
\left\{ \begin{array}{lll}
\mu_1 = -\dfrac 32 \mu_2^2 + C_1, & \quad C_1 \in \erre \\[0.4cm]
\mu_2^2\mu_3 = C_2, & \quad C_2\in \erre, \ C_2 \neq 0 \\[0.2cm]
\ddot{\mu}_2 = -2\mu_2^3 + 2C_1 \mu_2 + \dfrac{C_2^2}{\mu_2^3},
\end{array}\right.
\end{equation}
therefore the constancy of any of the curvatures implies that the
geodesic $f$ has all the curvatures constant. Since every solution
of \eqref{eulquattro2} is real analytic, $\mu_2$ has either
isolated stationary points or it is constant. \emph{From now on,
we assume that each curvature is not constant}. In this case,
multiplying the second equation by $\dot{\mu}_2$ and integrating
we obtain an equivalent differential equation, expressing a
conservation of energy:
\begin{equation}\label{energy}
\frac 12 \dot{\mu}_2^2 + \frac{1}{2}\mu_2^4 - C_1 \mu_2^2 + \frac{C_2^2}{2\mu_2^2} = \frac{C_3}{2},
\end{equation}
for some $C_3\in \erre$. We are eventually led to solve
\eqref{energy} for a positive function $\mu_2$ and an admissible
triple of real constants $C_1,C_2,C_3$. Multiplying by $\mu_2^2$,
taking square roots and changing variables we get
\begin{equation}\label{elli}
\int^{\mu_2^2(s)}_{\mu_2^2(s_0)} \frac{\di t}{\sqrt{-t^3+2C_1 t^2 +C_3 t - C_2^2}} = s-s_0.
\end{equation}
This can be solved by using elliptic functions. Denote with
$\xi_-, \xi_1,\xi_2$ the complex roots of the polynomial
\begin{equation}
P(t)= -t^3 +2C_1 t^2 + C_3 t - C_2^2,
\end{equation}
and note that $\xi_-\xi_1\xi_2=-C^2_2<0$, thus it can only happen
one of the following cases:
\begin{equation}\label{casiradici}
\begin{array}{ll}
(i) & \xi_-\in \erre, \ \xi_-<0, \ \xi_1 = \bar{\xi}_2\in
\mathds{C}\backslash \erre; \\[0.1cm]
(ii) & \xi_-,\xi_1,\xi_2\in \erre, \ \xi_- <\xi_1 \le \xi_2 < 0; \\[0.1cm]
(iii) & \xi_-,\xi_1,\xi_2\in \erre, \ \xi_- < 0 < \xi_1 = \xi_2; \\[0.1cm]
(iv) & \xi_-,\xi_1,\xi_2\in \erre, \ \xi_- < 0 < \xi_1 < \xi_2.
\end{array}
\end{equation}
Since the integral is between positive extremes and $\mu_2$ is not
constant, only case $(iv)$ is possible, therefore $\mu^2_2\in
[\xi_1,\xi_2]$ is a bounded function. Hereafter, we restrict to
the triples $(C_1,C_2,C_3)$ such that $P(t)$ has one negative and
two distinct positive solutions. Up to a translation of the
arclength parameter, and since the integral is finite around
$\xi_1$, we can assume that $\mu^2_2= \xi_1$ when $s=0$, so that
\eqref{elli} becomes
\begin{equation}\label{elli2}
\int^{\mu_2^2}_{\xi_1} \frac{\di t}{\sqrt{-(t-\xi_-)(t-\xi_1)(t-\xi_2)}} = s.
\end{equation}
The change of variables
$$
t = \xi_1 + \theta^2(\xi_2-\xi_1)
$$
Takes to the elliptic incomplete integral of first kind
\begin{equation}\label{elli2}
\frac{2}{\sqrt{\xi_1-\xi_-}}\int^{\sqrt{\frac{\mu_2^2-\xi_1}{\xi_2-\xi_1}}}_0
\frac{\di \theta}{\sqrt{1-\theta^2}\sqrt{1+\frac{\xi_2-\xi_1}{\xi_1-\xi_-}\theta^2}} = s.
\end{equation}
By (\cite{lawden} p. 51) we can apply formula
$$
\int^x_0 \frac{\di t}{\sqrt{b^2-t^2}\sqrt{a^2+t^2}} = \frac{1}{\sqrt{a^2+b^2}} \ \mathrm{sd}^{-1}
\left[\frac{x\sqrt{a^2+b^2}}{ab}, \frac{b}{\sqrt{a^2+b^2}}\right],
$$
where $0 \le x\le b, \ a>0$, with the suitable choices to obtain
\begin{equation}
\mu_2 = \sqrt{ \xi_1 +\frac{(\xi_2-\xi_1)(\xi_1-\xi_-)}{\xi_2-\xi_-}\left(\mathrm{sd}\left[s\sqrt{\frac{\xi_2-\xi_-}{\xi_1-\xi_-}},
\sqrt{\frac{\xi_2-\xi_1}{\xi_2-\xi_-}}\right]\right)^2}
\end{equation}
We summarize the result in the following
\begin{proposition}\label{soluzionecurvatura}
There exists a non-constant solution $(\mu_1,\mu_2,\mu_3)$,
$\mu_2>0$, $\mu_3>0$, of the system
\begin{equation}\label{sistenerg}
\left\{\begin{array}{l}
\displaystyle \mu_1 = - \frac 32 \mu_2^2+C_1 \\[0.3cm]
\mu_3 = C_2/\mu_2^2, \quad C_2 \neq 0 \\[0.1cm]
\displaystyle \frac 12 \dot{\mu}_2^2 + \frac{1}{2}\mu_2^4 - C_1 \mu_2^2 + \frac{C_2^2}{2\mu_2^2} = \frac{C_3}{2},
\end{array}\right.
\end{equation}
if and only if $(C_1,C_2,C_3)$ is an admissible triple, that is,
\begin{equation}\label{Pt}
P(t)= -t^3 +2C_1 t^2 + C_3 t - C_2^2
\end{equation}
has real roots $\xi_-<0<\xi_1<\xi_2$. In such case, $\mu_2$ is
given by the formula
\begin{equation}\label{intell}
\mu_2 = \sqrt{ \xi_1 +\frac{(\xi_2-\xi_1)(\xi_1-\xi_-)}{\xi_2-\xi_-}\left(\mathrm{sd}\left[s\sqrt{\frac{\xi_2-\xi_-}{\xi_1-\xi_-}},
\sqrt{\frac{\xi_2-\xi_1}{\xi_2-\xi_-}}\right]\right)^2}.
\end{equation}
\end{proposition}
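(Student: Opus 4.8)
The plan is to exploit the fact that the system \eqref{sistenerg} is governed entirely by its second unknown. Its first and third lines are purely algebraic, expressing $\mu_1=-\tfrac32\mu_2^2+C_1$ and $\mu_3=C_2/\mu_2^2$ as explicit functions of $\mu_2$; hence a triple $(\mu_1,\mu_2,\mu_3)$ with $\mu_2>0$, $\mu_3>0$ solves \eqref{sistenerg} if and only if $\mu_2$ is a non-constant positive solution of the energy equation \eqref{energy} and, in addition, $C_2>0$ (the latter being forced by $\mu_3=C_2/\mu_2^2>0$). Since differentiating \eqref{energy} recovers the reduced second-order equation in \eqref{relazalgebrichecurv}, nothing is lost in passing to this first-order energy form. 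Thus the entire statement reduces to deciding for which constants the scalar ODE \eqref{energy} admits a non-constant positive solution, and to integrating that solution explicitly.

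The heart of the matter is a phase-line analysis. Setting $u=\mu_2^2>0$ and multiplying \eqref{energy} by $\mu_2^2$ turns it into $\dot u^2=4P(u)$, with $P$ as in \eqref{Pt}; a real non-constant solution therefore requires $P\ge 0$ on the positive interval swept out by $u$. Because $P(0)=-C_2^2<0$ and the product of the roots equals $\xi_-\xi_1\xi_2=-C_2^2<0$, the admissible root configurations are exhausted by the four possibilities \eqref{casiradici}, and the sign of $P$ on $(0,\infty)$ is fixed by each of them. I would then eliminate the first three: in cases $(i)$ and $(ii)$ one checks $P<0$ throughout $(0,\infty)$, so $\dot u^2<0$ is impossible; in case $(iii)$ the double positive root gives $P\le 0$ with equality only at $u=\xi_1$, producing solely the constant solution, which is excluded by hypothesis. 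Only in case $(iv)$ does $P$ remain positive on $(\xi_1,\xi_2)$, and there the standard existence theory for $\dot u^2=4P(u)$ yields a non-constant solution oscillating between the simple turning points $\xi_1$ and $\xi_2$. This establishes both implications of the equivalence with admissibility.

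It remains to produce the closed form. On $(\xi_1,\xi_2)$ I would separate variables in $\dot u=2\sqrt{P(u)}$, normalise the arclength so that $u(0)=\xi_1$ (legitimate since the integral \eqref{elli} converges at the simple endpoint $\xi_1$), and so arrive at the integral already displayed in the discussion preceding the statement. The substitution $t=\xi_1+\theta^2(\xi_2-\xi_1)$ carries this into an incomplete elliptic integral of the first kind, and inverting it by the $\mathrm{sd}$-formula of (\cite{lawden}, p.~51) — after matching the parameters $a,b$ and the modulus to $\xi_-,\xi_1,\xi_2$ — gives precisely \eqref{intell}. Positivity $\mu_2>0$ is automatic because $u$ stays in $[\xi_1,\xi_2]\subset(0,\infty)$, and $\mu_1,\mu_3$ are then read off from the algebraic relations, completing the construction of the solution.

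The step I expect to demand the most care is the root-configuration dichotomy: correctly reading the sign of $P$ on $(0,\infty)$ from \eqref{casiradici}, and in particular cleanly separating the degenerate case $(iii)$, which yields only the forbidden constant solution, from the genuine oscillatory case $(iv)$. The elliptic substitution and the identification with Lawden's formula are routine by comparison, although one must keep track of the sign condition $C_2>0$ needed for $\mu_3>0$ so that the equivalence with admissibility is exact rather than merely up to the sign of $C_2$.
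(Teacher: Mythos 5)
Your proposal is correct and follows essentially the same route as the paper: reduce to the scalar energy equation for $\mu_2$, pass to $u=\mu_2^2$ so that the sign of $P$ on $(0,\infty)$ selects case $(iv)$ of \eqref{casiradici} among the root configurations (the paper dispatches $(i)$--$(iii)$ with the same observation, stated more tersely), and then invert the resulting incomplete elliptic integral via Lawden's $\mathrm{sd}$ formula after the substitution $t=\xi_1+\theta^2(\xi_2-\xi_1)$. Your explicit handling of the degenerate double-root case and of the sign of $C_2$ is a touch more careful than the paper's one-line justification, but it is the same argument.
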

%
%
Once the curvatures are known, we can even provide an explicit
expression for conformal geodesics, that is, we can write down and
integrate the equation of motion for $f$. The case $n=2$ is
trivial and $n=3$ already appears in \cite{musso}, whose method we
will follow closely. Therefore, we assume $n=4$ and $f$ to
$3$-generic, so that $\phi$ is given by \eqref{cartanformfrenet4}.
The key step is to provide a matrix $\Theta\in \mab(4)$, depending
on the conformal curvatures, such that the system
\eqref{eulquattro} defining the conformal geodesics is equivalent
to the differential equation
\begin{equation}\label{lax}
\dot\Theta = [\Theta, \phi\left(\tfrac{\di}{\di s}\right)].
\end{equation}
With some computation, we find that
\begin{equation}\label{lambda}
\Theta = \left(
\begin{array}{cccccc}
0  & 1  & -\mu_1-\mu_2^2 &\dot\mu_2 & \mu_2\mu_3 & 0 \\
0  & 0  & 0 & -\mu_2 & 0 & 1  \\
1  & 0  & 0 & 0 & 0 & -\mu_1-\mu_2^2 \\
0 & \mu_2 & 0 & 0 & 0 & \dot\mu_2 \\
0 & 0 & 0 & 0 & 0 & \mu_2\mu_3 \\
0 & 0 & 1 & 0 & 0 & 0
\end{array}
\right)
\end{equation}
satisfies \eqref{lax}. Therefore, if $e$ is the Frenet frame of
the conformal geodesic, $e\Theta e^{-1}$ does not depend on $s$
and defines a fixed element $\omega\in \mab(4)$. Substituting
\eqref{relazalgebrichecurv}, a straightforward calculation yields
the characteristic polynomial of $\Theta$:
\begin{equation}\label{chit}
\chi_\Theta(t) = t^6+2C_1t^4 -(1+C_3)t^2 -C_2^2,
\end{equation}
whose roots can be computed algebraically in dependence of
$C_1,C_2,C_3$. Note that, when $\mu_3\equiv 0$ (i.e. $C_2\equiv
0$), this coincides with $t$ times the characteristic polynomial
in \cite{musso}. Writing $e\Theta = \omega e$ by columns, we get
the system
\begin{equation}\label{sistpermoto}
\left\{\begin{array}{lcllcl} \omega e_0 & = & e_2; & \qquad \omega
e_5 & = & e_1 -
(\mu_1+\mu_2^2)e_2 + \dot\mu_2 e_3 + \mu_2\mu_3 e_4;\\[0.2cm]
\omega e_1 & = & e_0 + \mu_2 e_3; & \qquad \omega e_2& = & -(\mu_1+\mu_2^2)e_0 + e_5; \\[0.2cm]
\omega e_4& = & \mu_2\mu_3 e_0; &  \qquad \omega e_3 & = & \dot \mu_2 e_0 - \mu_2 e_1.
\end{array}\right.
\end{equation}
%
%
%
A repeated application of $\omega$ to the above system gives
\begin{equation*}
\begin{array}{lcl}
e_1 & = & \displaystyle -\frac{1}{\mu_2^2(\dot\mu_2^2+1)} \omega^5
e_0 - \frac{\dot \mu_2}{\mu_2(\dot \mu_2^2+1)} \omega^4 e_0 -
\frac{2(\mu_1+\mu_2^2)}{\mu_2^2(\dot\mu_2^2+1)} \omega^3e_0 \\[0.5cm]
& & \displaystyle -
\frac{2\dot\mu_2(\mu_1+\mu_2^2)}{\mu_2(\dot\mu_2^2+1)} \omega^2
e_0 + \frac{1+\dot\mu_2^2+\mu_2^2\mu_3^2}{\mu_2^2(\dot\mu_2^2+1)}
\omega e_0 + \frac{\dot \mu_2(1+\dot\mu_2^2
+\mu_2^2\mu_3^2)}{\mu_2(\dot\mu_2^2+1)}e_0.
\end{array}
\end{equation*}
Using \eqref{relazalgebrichecurv} and recalling that $\dot
e_0=e_1$ we are led to the equations of motion
\begin{equation}\label{moto}
\begin{array}{lcl}
\dot e_0 & = & \displaystyle -\frac{1}{\mu_2^2(\dot\mu_2^2+1)}
\omega^5 e_0 - \frac{\dot \mu_2}{\mu_2(\dot \mu_2^2+1)} \omega^4
e_0 +
\frac{\mu_2^2-2C_1}{\mu_2^2(\dot\mu_2^2+1)} \omega^3e_0 \\[0.5cm]
& & \displaystyle +
\frac{\dot\mu_2(\mu_2^2-2C_1)}{\mu_2(\dot\mu_2^2+1)} \omega^2 e_0
+ \frac{\mu_2^2+\dot\mu_2^2\mu_2^2+C_2^2}{\mu_2^4(\dot\mu_2^2+1)}
\omega e_0 + \frac{\dot \mu_2(\mu_2^2+\dot\mu_2^2\mu_2^2
+C_2^2)}{\mu_2^3(\dot\mu_2^2+1)}e_0.
\end{array}
\end{equation}
To solve \eqref{moto}, we study the endomorphism $M$ of $\erre^6$
represented, in the basis $\{\eta_0,\eta_A,\eta_{n+1}\}$, by the
matrix $\omega$. Observe that, from \eqref{Pt} and \eqref{chit},
\begin{equation*}
\chi_M(t)=\chi_\Theta(t)=P(-t^2)-t^2;
\end{equation*}
moreover, under the assumption \eqref{casiradici} $(iv)$, the
polynomial $P(-x)-x$ has three distinct real roots, $t_+$, $t_1$
and $t_2$, satisfying
\begin{equation}\label{radici}
t_2<-\xi_2<-\xi_1<t_1<0<-\xi_-<t_+,
\end{equation}
thus the eigenvalues of $M$ are
\begin{equation}\label{radici2}
\lambda=\sqrt{t_+},\quad -\lambda,\quad
i\tau_1=i\sqrt{\abs{t_1}},\quad -i\tau_1,\quad
i\tau_2=i\sqrt{\abs{t_2}},\quad -i\tau_2.
\end{equation}
and
\begin{equation}\label{decomponiamo}
\chi_\Theta(t) = \big(t^2-\lambda^2\big)\big(t^2+\tau_1^2\big)\big(t^2+\tau_2^2\big).
\end{equation}
The eigenvectors relative to the real eigenvalues $\pm \lambda$
can be computed via $\Theta$ and, once expressed in the moving
frame $\set{e_0,\ldots,e_5}$, they read
\begin{equation*}
{\phantom{\frac{a}{a}}}^t\pa{\lambda^2-\frac12\mu^2_2+C_1,
\frac{\pm \lambda-\mu_2\dot\mu_2}{\mu_2^2+\lambda^2},\pm
\lambda,\frac{\pm
\lambda\dot\mu_2+\mu_2}{\mu_2^2+\lambda^2},\frac{\pm
C_2}{\mu_2\lambda},1}.
\end{equation*}
One can use this explicit expression and easily check that they
are light-like. We call $S_1$ and $S_2$ the $1$-dimensional
eigenspaces relative to $\lambda$ and $-\lambda$ respectively.
Then we can decompose $\erre^6$ as $S_1\oplus S_2\oplus F$, where
$F$ is the orthogonal complement of $S_1\oplus S_2$. We observe
that $F$ is space-like and $M_{|F}$ is a skew-symmetric
endomorphism of $F$, with eigenvalues $\pm i\tau_1$ and $\pm
i\tau_2$. By standard linear algebra, $M_{|F}$ can therefore be
brought to the following block-diagonal form by an orthogonal
transformation:
\begin{equation*}
\left(
\begin{array}{cccc}
0  & -\tau_1  & 0 & 0 \\
\tau_1  & 0  & 0 & 0 \\
 0 & 0  & 0 & -\tau_2\\
 0 & 0 & \tau_2 & 0
\end{array}
\right).
\end{equation*}
Therefore, there exists an element $A\in\mob(4)$ such that
\begin{equation}\label{diag}
A\omega A^{-1}= \left(
\begin{array}{cccccc}
\lambda & 0 & 0 & 0 & 0 & 0\\
0 & 0 & -\tau_1 & 0 & 0 & 0\\
0 & \tau_1 & 0 & 0 & 0 & 0 \\
0 & 0 & 0 & 0 & -\tau_2 & 0\\
0 & 0 & 0 & \tau_2 & 0 & 0\\
0 & 0 & 0 & 0 & 0 & -\lambda
\end{array}
\right)
\end{equation}
Since we are interested in solving equations \eqref{moto} up to a conformal transformation of $\mathds{Q}_4$,
we can assume that $\omega$ has the form at the RHS of \eqref{diag} from the start, possibly substituting $e$
with $Ae$.\\
Now we set $h(s)=B^{-1}e_0(s)$, with
\begin{equation*}
B= \left(
\begin{array}{cccccc}
 1 & 0 & 0 & 0 & 0 & 0\\
 0 & i & -i & 0 & 0 & 0\\
 0 & 1 & 1 & 0 & 0 & 0\\
 0 & 0 & 0 & i & -i & 0\\
0 & 0 & 0 & 1 & 1 & 0\\
0 & 0 & 0 & 0 & 0 & 1
\end{array}
\right),
\end{equation*}
so that $B^{-1}\omega B$ is diagonal.
Setting
\begin{align*}
&a_0=\frac{\dot \mu_2(\mu_2^2+\dot\mu_2^2\mu_2^2 +C_2^2)}{\mu_2^3(\dot\mu_2^2+1)},
\quad a_1=\frac{\mu_2^2+\dot\mu_2^2\mu_2^2+C_2^2}{\mu_2^4(\dot\mu_2^2+1)},\quad
a_2= \frac{\dot\mu_2(\mu_2^2-2C_1)}{\mu_2(\dot\mu_2^2+1)} \\
&a_3=\frac{\mu_2^2-2C_1}{\mu_2^2(\dot\mu_2^2+1)}, \quad a_4=-\frac{\dot \mu_2}{\mu_2(\dot \mu_2^2+1)} ,\quad
a_5=-\frac{1}{\mu_2^2(\dot\mu_2^2+1)}
\end{align*}
and
$$
\begin{array}{ll}
I(t)=a_4t^4-a_2t^2+a_0, & \quad J(t)=a_5t^5-a_3t^3+a_1t \\[0.2cm]
\tilde I(t)=a_4t^4+a_2t^2+a_0, & \quad \tilde J(t)=a_5t^5+a_3t^3+a_1t,
\end{array}
$$
the equations of motion become
$$
\begin{array}{ll}
\dot h^0 = \sq{\tilde I(\lambda)+\tilde J(\lambda)}h^0; & \quad
\dot h^1 = \sq{I(\tau_1)+iJ(\tau_1)}h^1; \\[0.2cm]
\dot h^2 = \sq{I(\tau_1)-iJ(\tau_1)}h^2; & \quad \dot h^3 =
\sq{I(\tau_2)+iJ(\tau_2)}h^3; \\[0.2cm]
\dot h^4 = \sq{I(\tau_2)-iJ(\tau_2)}h^4; & \quad \dot h^5 =
\sq{\tilde I(\lambda)-\tilde J(\lambda)}h^5;
\end{array}
$$
using \eqref{energy}, \eqref{chit} and \eqref{decomponiamo} we
find that
$$
\begin{array}{ll}
\tilde I(\lambda)=-\dfrac{\lambda^2\dot\mu_2}{\mu_2(\mu_2^2+\lambda^2)}+\dfrac{\dot\mu_2}{\mu_2}; & \qquad
\tilde J(\lambda)=\dfrac{\lambda}{\mu_2^2+\lambda^2};\\[0.4cm]
I(\tau_i)=-\dfrac{\tau_i^2\dot\mu_2}{\mu_2(\mu_2^2-\tau_i^2)}+\dfrac{\dot\mu_2}{\mu_2}, & \qquad
J(\tau_i)=\dfrac{\tau_i}{\mu_2^2-\tau_i^2}, \qquad i=1,2.
\end{array}
$$
Integrating the above equalities and observing that, by
\eqref{radici}, \eqref{radici2}, \eqref{decomponiamo} and $\mu_2^2
\in [\xi_1,\xi_2]$ we must have $\mu^2_2\in(\tau_1^2,\tau_2^2)$,
we are led to the solutions
\begin{align*}
e_0^0=&\ p_0\sqrt{\mu_2^2+\lambda^2}\exp\pa{\lambda\int_{s_0}^s\frac{\di t}{\mu_2^2+\lambda^2}};\\
e_0^1=&\ \rho_1 \sqrt{\mu_2^2-\tau_1^2}\sin \pa{\tau_1\int_{s_0}^s\frac{\di t}{\tau_1^2-\mu_2^2}-\theta_1};\\
e_0^2=&\ \rho_1 \sqrt{\mu_2^2-\tau_1^2}\cos \pa{\tau_1\int_{s_0}^s\frac{\di t}{\tau_1^2-\mu_2^2}-\theta_1};\\
e_0^3=&\ \rho_2 \sqrt{\tau_2^2-\mu_2^2}\sin \pa{\tau_2\int_{s_0}^s\frac{\di t}{\tau_2^2-\mu_2^2}-\theta_2};\\
e_0^4=&\ \rho_2 \sqrt{\tau_2^2-\mu_2^2}\cos \pa{\tau_2\int_{s_0}^s\frac{\di t}{\tau_2^2-\mu_2^2}-\theta_2};\\
e_0^5=&\ p_5\sqrt{\mu_2^2+\lambda^2}\exp\pa{-\lambda\int_{s_0}^s\frac{\di t}{\mu_2^2+\lambda^2}},
\end{align*}
for arbitrary constants
$p_0,p_5,\rho_1,\rho_2,\theta_1,\theta_2\in
\erre$, $\rho_i\ge0$.\\
The projectivization of these solutions represents a conformal
geodesic if and only if $e_0$ is light-like, and this happens when
the constants satisfy the conditions $2p_0p_5=\rho_1^2-\rho_2^2$
and $2p_0p_5\lambda^2=\rho_2^2\tau_2^2-\rho_1^2\tau_1^2$. The most
general light-like solution is therefore
\begin{align*}
e_0^0=&\ A\frac{\sqrt{\tau_2^2-\tau_1^2}}{\sqrt2}\sqrt{\mu_2^2+\lambda^2}\ \exp\pa{\lambda\int_{s_0}^s\frac{\di t}{\mu_2^2+\lambda^2}};\\
e_0^1=&\ \sqrt{\lambda^2+\tau_2^2}\sqrt{\mu_2^2-\tau_1^2}\sin \pa{\tau_1\int_{s_0}^s\frac{\di t}{\tau_1^2-\mu_2^2}-\theta_1};\\
e_0^2=&\ \sqrt{\lambda^2+\tau_2^2} \sqrt{\mu_2^2-\tau_1^2}\cos \pa{\tau_1\int_{s_0}^s\frac{\di t}{\tau_1^2-\mu_2^2}-\theta_1};\\
e_0^3=&\ \sqrt{\lambda^2+\tau_1^2} \sqrt{\tau_2^2-\mu_2^2}\sin \pa{\tau_2\int_{s_0}^s\frac{\di t}{\tau_2^2-\mu_2^2}-\theta_2};\\
e_0^4=&\ \sqrt{\lambda^2+\tau_1^2} \sqrt{\tau_2^2-\mu_2^2}\cos \pa{\tau_2\int_{s_0}^s\frac{\di t}{\tau_2^2-\mu_2^2}-\theta_2};\\
e_0^5=&\ \frac1A\frac{\sqrt{\tau_2^2-\tau_1^2}}{\sqrt2}\sqrt{\mu_2^2+\lambda^2}\ \exp\pa{-\lambda\int_{s_0}^s\frac{\di t}{\mu_2^2+\lambda^2}}.
\end{align*}
Let us denote by $\tilde e_0$ the particular solution with $A=1$,
$\theta_1=\theta_2=0$. Then the general solution $e_0$ is obtained
as $e_0=M\tilde e_0$, where
\begin{equation*}
M=\left(
\begin{array}{cccccc}
A & 0 & 0 & 0 & 0 & 0\\
0 & \cos\theta_1 & -\sin\theta_1 & 0 & 0 & 0\\
0 & \sin\theta_1 & \cos\theta_1 & 0 & 0 & 0 \\
0 & 0 & 0 &  \cos\theta_2 & -\sin\theta_2 & 0\\
0 & 0 & 0 & \sin\theta_2 & \cos\theta_2 & 0\\
0 & 0 & 0 & 0 & 0 & A^{-1}
\end{array}
\right)\in \mob(4).
\end{equation*}
Therefore, up to a conformal motion of $\mathds{Q}_4$, the
conformal geodesics are given by
\begin{align*}
e_0^0=&\ \frac{\sqrt{\tau_2^2-\tau_1^2}}{\sqrt2}\sqrt{\mu_2^2+\lambda^2}\ \exp\pa{\lambda\int_{s_0}^s\frac{\di t}{\mu_2^2+\lambda^2}};\\
e_0^1=&\ \sqrt{\lambda^2+\tau_2^2}\sqrt{\mu_2^2-\tau_1^2}\sin \pa{\tau_1\int_{s_0}^s\frac{\di t}{\tau_1^2-\mu_2^2}};\\
e_0^2=&\ \sqrt{\lambda^2+\tau_2^2} \sqrt{\mu_2^2-\tau_1^2}\cos \pa{\tau_1\int_{s_0}^s\frac{\di t}{\tau_1^2-\mu_2^2}};\\
e_0^3=&\ \sqrt{\lambda^2+\tau_1^2} \sqrt{\tau_2^2-\mu_2^2}\sin \pa{\tau_2\int_{s_0}^s\frac{\di t}{\tau_2^2-\mu_2^2}};\\
e_0^4=&\ \sqrt{\lambda^2+\tau_1^2} \sqrt{\tau_2^2-\mu_2^2}\cos \pa{\tau_2\int_{s_0}^s\frac{\di t}{\tau_2^2-\mu_2^2}};\\
e_0^5=&\ \frac{\sqrt{\tau_2^2-\tau_1^2}}{\sqrt2}\sqrt{\mu_2^2+\lambda^2}\ \exp\pa{-\lambda\int_{s_0}^s\frac{\di t}{\mu_2^2+\lambda^2}}.
\end{align*}
\section{Appendix}
In this section we provide a somewhat detailed proof of
Proposition \ref{existvariation}. We refer to Section
\ref{sezionearco} for notations.
\begin{proposition*}
For every $p\in I$ there exists an open neighbourhood $U$ of $p$
such that the following holds: for every collection of $(n-1)$
smooth functions $\lambda^\alpha \in C^\infty(I)$ with compact
support $C$ included in $U$, there exist $\eps$ sufficiently
small, a variation $v: I \times (-\eps,\eps)\ra \mathds{Q}_n$ and
a special third order frame $e: U \times (-\eps,\eps)\ra \mob(n)$
along $v$ such that $\lambda^\alpha_0(p,0)= \lambda^\alpha(p)$ for
every $p\in U$.
\end{proposition*}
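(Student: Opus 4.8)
The plan is to build the frame in two stages: first exhibit a concrete zeroth order frame along a variation realizing the prescribed normal velocities, then reduce it, for each fixed $t$, to a special third order frame in a way that leaves the initial data untouched. I would begin by fixing, once and for all, a relatively compact neighbourhood $U$ of $p$ carrying a special third order frame $e_0\colon U\ra\mob(n)$ of $f$ (this exists by the reduction of Section~\ref{sez_frenet}, since $f$ is $1$-generic). For each $\alpha\in\set{2,\dots,n}$ I would pick a \emph{fixed} element $N_\alpha\in\mab(n)$ with $N_\alpha\eta_0=\eta_\alpha$; this is possible because a matrix in $\mab(n)$ has zeroth column of the form $\pa{\Xi^0_0,\Xi^A_0,0}$ with the entries $\Xi^A_0$ free. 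Given $\lambda^\alpha$ with $\supp\lambda^\alpha=C\subset U$, I then set
\begin{equation*}
E(q,t)=e_0(q)\exp\pa{t\,\textstyle\sum_\alpha\lambda^\alpha(q)N_\alpha}\in\mob(n),
\end{equation*}
which is smooth on $U\times\erre$ and equals $e_0(q)$ for $q\notin C$. Extending $v=\pi\circ E$ by $f$ on $I\setminus C$ gives a smooth map $I\times\erre\ra\mathds{Q}_n$ with $v(\cdot,0)=f$ and $v\equiv f$ off $C$; for $|t|$ small it is an immersion, and being $1$-generic at $t=0$ on the compact $\overline{U}$ it stays so there, so $v$ is an admissible variation and $E$ a zeroth order frame along it. Moreover $\pa{E^*\Phi}(\partial_t)\big|_{t=0}=\sum_\alpha\lambda^\alpha(q)N_\alpha$, whose $(\alpha,0)$ entry is $\lambda^\alpha(q)$, i.e. in the notation \eqref{3.3}--\eqref{notaz} one has $\Lambda^\alpha_0(q,0)=\lambda^\alpha(q)$.

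The second stage applies, for each fixed $t$, the reduction algorithm of Section~\ref{sez_frenet} to $E(\cdot,t)$: there is a smooth $R\colon U\times(-\eps,\eps)\ra\mob(n)_0$, the cumulative product of the reduction factors, so that $e=ER$ is special third order along every $f_t=v(\cdot,t)$. Since $e_0$ is already special third order, each reduction factor is the identity at $t=0$, whence $R(q,0)=\id$; by smoothness and the openness of the genericity conditions the reduction persists on $U$ for $|t|<\eps$, with $\eps$ depending on $\lambda^\alpha$ but $U$ \emph{not}. This uniformity of $U$ — the delicate point flagged before the statement — holds precisely because at $t=0$ the whole reduction is dictated by $e_0$ alone, independently of $\lambda^\alpha$.

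The crux is to check that this $t$-dependent reduction preserves the initial data. Writing $e^*\Phi=R^{-1}\pa{E^*\Phi}R+R^{-1}\di R$, the forms $\pa{\cdot}^A_0$ transform tensorially as in \eqref{cambioprimo}: as $R$ is valued in $\mob(n)_0$ its zeroth column is $r^{-1}\eta_0$, so the zeroth column of $R^{-1}\di R$ is $-r^{-1}(\di r)\eta_0$ and therefore $\pa{R^{-1}\di R}^\alpha_0=0$ for every $\alpha\ge1$; in particular no $\di t$ term enters $\pa{e^*\Phi}^\alpha_0$. Evaluating the surviving tensorial part $\pa{r^{-1}\,{}^t\!A}^\alpha_\beta\pa{E^*\Phi}^\beta_0$ at $t=0$, where $R(q,0)=\id$, gives $\pa{e^*\Phi}^\alpha_0\big|_{(q,0)}=\pa{E^*\Phi}^\alpha_0\big|_{(q,0)}$, so the frame $e$ satisfies $\lambda^\alpha_0(q,0)=\Lambda^\alpha_0(q,0)=\lambda^\alpha(q)$, as required. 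I expect the main obstacle to be exactly this last invariance together with the uniformity of $U$; both are resolved by the single observation that everything at $t=0$ reduces to the fixed datum $e_0$ and that $\mob(n)_0$ fixes $[\eta_0]$, so the reduction contributes nothing to the forms $\phi^\alpha_0$.
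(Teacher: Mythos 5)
Your proof is correct, and it takes a genuinely different route from the one in the Appendix. The paper works in adapted coordinates: it writes $f$ locally as $x\mapsto(x,0)$, chooses a section of $\mob(n)\ra\mathds{Q}_n$ so that the span of $\psi^1_0$ is that of $\di w$, solves the resulting linear system for the variation, obtaining $z^\alpha(x,t)=t\pa{B^{-1}\circ f}^\alpha_\beta\lambda^\beta$, and only then reduces the frame; since the starting zeroth order frame $\sigma\circ v$ is not adapted even at $t=0$, the reduction scrambles the data $\lambda^\alpha_0$ by an invertible linear map, and an extra rotation (using the transitivity of rotations and homotheties on $\erre^{n-1}\setminus\set{0}$) is needed at the end to restore $\lambda^\alpha_0(\cdot,0)=\lambda^\alpha$. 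You instead build the lift first, as $e_0\exp\pa{tL}$ with $e_0$ already a special third order frame of $f$ and $L\in\mab(n)$ realizing the prescribed $\partial_t$-data; this forces $R(\cdot,0)=\id$, so the tensoriality of $\pa{\phi^A_0}$ under $\mob(n)_0$-gauge changes preserves $\lambda^\alpha_0(\cdot,0)$ with no restoration step. Both arguments secure the crucial uniformity of $U$ in the same way, namely by fixing the ambient datum ($e_0$, respectively the coordinates and the section) before $\lambda^\alpha$ is chosen. What your version buys is a coordinate-free construction and an essentially automatic invariance of the initial data; what it leaves slightly implicit --- as does the paper at the corresponding point --- is that the reduction factors can be chosen smoothly on all of $U\times(-\eps,\eps)$: this requires taking the canonical near-identity normalization at each step, which is available precisely because at $t=0$ (and off $\supp\lambda^\alpha$) the frame is already in reduced position. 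One small wording slip: ``no $\di t$ term enters $\pa{e^*\Phi}^\alpha_0$'' should read ``no $\di t$ term enters from $R^{-1}\di R$'', since the tensorial part of course carries exactly the $\di t$ term $\lambda^\alpha_0\,\di t$ you are computing.
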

\begin{proof}
Consider the immersion $f:I\ra \mathds{Q}_n$. We fix local
coordinates $x$ on some neighbourhood $U_0$ of $p$ and
$(w,y^\alpha)$ on a neighbourhood $V_0\supset f(U_0)$ of $f(p)\in
\mathds{Q}_n$, with the property that the local expression of $f$
is
\begin{equation*}
f:x \longmapsto(x,0).
\end{equation*}
Any variation $v$ can be locally expressed, at least if $t$ is
small, as
\begin{equation}\label{varlocale}
v(x,t)=(x,z^\alpha(x,t)),
\end{equation}
where $z^\alpha$ are real-valued functions. Note that $f_t$ is an
immersion for every $t$.
Locally around $f(p)\in \mathds{Q}_n$, both $\set{\di w,\di
y^\alpha}$ and $\set{\psi^A_0}$ (with respect to a section
$\sigma$ of $\pi$) are local bases of the space of $1$-forms, so
there exists a non singular matrix $B$ such that
\begin{equation*}
\psi^A_0=B^A_1\di w+B^A_\alpha \di y^\alpha.
\end{equation*}
Under a change of sections of $\mob(n)\ra \mathds{Q}_n$
\begin{equation*}
\tilde\psi^A_0=r^{-1}A^B_A\psi^B_0,
\end{equation*}
so
\begin{equation*}
\tilde\psi^A_0=\tilde B^A_1\di w+\tilde B^A_\alpha \di y^\alpha,
\end{equation*}
with
\begin{equation*}
\tilde B^A_C=r^{-1}A^B_AB^B_C.
\end{equation*}
Therefore we can always choose a section $\sigma$, defined on some
neighbourhood $V_1\subset V_0$ of $f(p)$, such that
$B^1_\alpha=0$, namely the span of $\psi^1_0$ coincides with that
of the form $\di w$. Since $B$ is nonsingular, this implies that
both $B^1_1\neq 0$ and $(B^\alpha_\beta)$ is nonsingular. The
subgroup of $\mob(n)$ preserving such frames is constituted by the
matrices with $A^\alpha_1=0$. Since $A\in SO(n)$, this implies
$A^1_\alpha=0$, $A^1_1=1$ and the transformation laws for the
matrix $B$ are
\begin{equation*}
\tilde B^1_1=r^{-1}B^1_1,\qquad\tilde B^\alpha_1=r^{-1}A^\beta_\alpha B^\beta_1,\qquad\tilde B^\alpha_\beta=r^{-1}A^\gamma_\alpha B^\gamma_\beta.
\end{equation*}
Now the expression of $\psi^A_0$ is
\begin{equation}\label{changebases}
\psi^1_0=B^1_1\di w,\qquad \psi^\alpha_0=B^\alpha_1\di w+B^\alpha_\beta \di y^\beta.
\end{equation}
Define $U= U_0 \cap f^{-1}(V_0)$, and choose an arbitrary
collection of smooth functions $\{\lambda^\alpha\}$ supported in
some compact set $C\subset U$. Let $v$ be a variation of the form
\eqref{varlocale} on $U$. Pulling back \eqref{changebases}:
\begin{equation}\label{phiecarta}
\phi^1_0=\big(B^1_1\circ v\big) \di x,\qquad \phi^\alpha_0=\big(B^\alpha_1\circ v\big) \di x+\big(B^\alpha_\beta\circ v\big)
\pa{\frac{\partial z^\beta}{\partial x}\di x+\frac{\partial z^\beta}{\partial t}\di t}
\end{equation}
Observe that, since $B^1_1\neq 0$ pointwise, the first equation in
\eqref{phiecarta} implies that $\{\phi^1_0\}$ never vanishes. But,
on the other hand, $\phi^A_0=\phi^A_0(t)+\lambda^A_0 \di t$ so
\begin{equation}\label{sistema}
\begin{cases}
\lambda^1_0 = 0; \qquad \phi^1_0(t)= \big(B^1_1\circ v\big) \di x\\[0.2cm]
\big(B^\alpha_1\circ v\big)\di x+\big(B^\alpha_\beta\circ v\big) \dfrac{\partial z^\beta}{\partial x}\di x=\phi^\alpha_0(t);\\[0.2cm]
\big(B^\alpha_\beta\circ v\big) \dfrac{\partial z^\beta}{\partial t}=\lambda^\alpha_0.
\end{cases}
\end{equation}
Restricting the third equality to $t=0$ we get
\begin{equation}\label{datoin}
\big(B^\alpha_\beta\circ f\big) \frac{\partial z^\beta}{\partial t}\left.\!\!\!\frac{}{}\right|_{t=0}=\lambda^\alpha_0(0).
\end{equation}
Since $(B^\alpha_\beta)$ is always nonsingular, we can define
\begin{equation*}
z^\alpha(x,t)=t \big(B^{-1}\circ f\big)^\alpha_\beta\lambda^\beta,
\end{equation*}
and we observe that, by \eqref{datoin},
$\lambda^\alpha_0(0)=\lambda^\alpha$, $z^\alpha(x,0)=0$ and
$z^\alpha(x,t)=0$ for every $t\in (-\eps,\eps)$ and $x\in
U\backslash C$. The second condition allows the variation $v$
defined in \eqref{varlocale} to be extended smoothly to the whole
$I\times (-\eps,\eps)$ by setting $v(p,t)=f(p)$ for $p\not\in U$.
Moreover, $e$ is a local zeroth order frame along $v$ defined on
the whole $U$ and the third of \eqref{sistema} is satisfied with
$\lambda^\alpha_0(x,t)$ defined by
\begin{equation*}
\lambda^\alpha_0(x,t)=\pa{B\circ v}^\alpha_\beta\pa{B^{-1}\circ f}^\beta_\gamma\lambda^\gamma(x).
\end{equation*}
Now we need to perform the frame reduction for frames along
variations. The procedure is almost the same as without the
dependence on $t$, but paying attention that, step by step, the
neighbourhood $U$ on which the frame is defined be kept fixed.
Under a generic change of zeroth order frames along $v$,
$\phi^A_0$ change according to
\begin{equation}\label{cambiofi}
\tilde\phi^1_0=r^{-1}\pa{R^1_1\phi^1_0+R^\beta_1\phi^\beta_0},\qquad\tilde\phi^\alpha_0=r^{-1}\pa{R^1_\alpha\phi^1_0+
R^\beta_\alpha\phi^\beta_0}
\end{equation}
where $R\in SO(n)$. Decomposing
$\phi^A_0=\phi^A_0(t)+\lambda^A_0\di t$ and observing that
$\lambda^1_0=0$ by \eqref{sistema}, we get
\begin{equation}\label{cambiooo}
\begin{array}{ll}
\tilde\lambda^1_0=r^{-1}R^\alpha_1\lambda^\alpha_0, & \tilde\lambda^\alpha_0=r^{-1}R^\beta_\alpha\lambda^\beta_0. \\[0.2cm]
\tilde\phi^1_0(t)=r^{-1}\pa{R^1_1\phi^1_0(t)+R^\beta_1\phi^\beta_0(t)}, & \tilde\phi^\alpha_0(t)=
r^{-1}\pa{R^1_\alpha\phi^1_0(t)+R^\beta_\alpha\phi^\beta_0(t)}
\end{array}
\end{equation}
Since the set $\{\phi^1_0(t),\phi^\beta_0(t)\}$ has rank $1$ on
$U$ being $f_t$ an immersion, we can choose a suitable $R$ so that
$\widetilde{\phi}^\alpha_0(t)=0$, that is, $e_t$ is a first order
frame for every $t$. $R$ is globally defined on $U\times
(-\eps,\eps)$, as it is apparent from the linear algebra
procedure involved in its construction.\\
Observe that the matrix $(R^\alpha_\beta)$ is nonsingular for
every $(x,t)$; indeed, let $v^\alpha$ be such that $R^\beta_\alpha
v^\alpha=0$, then, since $\tilde\phi^\alpha_0(t)=0$, using
\eqref{cambiofi} we get
\begin{equation*}
0=r^{-1}v^\alpha\pa{R^1_\alpha\phi^1_0(t)+R^\beta_\alpha\phi^\beta_0(t)}=r^{-1}v^\alpha R^1_\alpha\phi^1_0(t),
\end{equation*}
therefore
\begin{equation*}
v^\alpha R^1_\alpha \phi^1_0(t)=0.
\end{equation*}
Since $\phi^1_0(t)$ does not vanish by \eqref{sistema}, we deduce
that $R^1_\alpha v^\alpha=0$, that is  $R^A_\alpha v^\alpha=0$,
which implies $v^\alpha=0$ by the invertibility of $R$. Therefore
$(R^\alpha_\beta)$ is nonsingular and in particular
$(\tilde\lambda^\alpha_0)=0$ if and only if
$(\lambda^\alpha_0)=0$. Since the action of the group of rotations
and homotheties is transitive on $\erre^{n-1}\backslash \{0\}$, by
\eqref{cambiooo} we can perform a change of first order frames by
means of a suitable globally defined matrix of the kind
\begin{equation*}
\tilde R=
\left(
\begin{array}{cc}
 1 & 0 \\
 0 & C \\
\end{array}
\right), \qquad C\in SO(n-1),
\end{equation*}
to smoothly restore $\tilde\lambda^\alpha_0(x,0)$ to its original
value $\lambda^\alpha(x)$. We now proceed with a frame reduction
to the second order keeping $\lambda^\alpha(x,0)$ unaltered. Write
\begin{equation}\label{framenor}
{\phi^\alpha_1}_{(p,t)}=h^\alpha(p,t)\phi^1_0(t)_p+\Lambda^\alpha_1(p,t)\di t=h^\alpha(p,t){\phi^1_0}_{(p,t)}
+\lambda^\alpha_1(p,t)\di t,
\end{equation}
where $\lambda^\alpha_1=-h^\alpha\lambda^1_0+\Lambda^\alpha_1$ are smooth functions satisfying $\lambda^\alpha_1(p,t)=0$ for $p\in U\setminus C$ and $t\in(-\eps,\eps)$.\\
A change of first order frame has values in
\begin{equation}
  \mob(n)_1=\set{\left(\begin{array}{cccc}
  r^{-1}&x &{}^tyB&\frac{1}{2}r\pa{x^2+|y|^2}\\0&1&0&rx\\0&0&B&ry\\0&0&0&r
  \end{array}\right)\left| \begin{array}{l}
    r\in\erre^+,\\ B\in SO(n-1),\\x\in\erre,y\in\erre^{n-1}
  \end{array}\right.},
\end{equation}
Under a change of first order frame, the coefficients $h^\alpha$
changes according to \eqref{cambhalfamono}: $\tilde h^\alpha
=rB^\beta_\alpha\pa{h^\beta-y^\beta}$. Considering the globally
defined frame $e=eK$, where $K$ has the form above with
$B=I_{n-1}$ and $y^\alpha=h^\alpha$ we can pass to a second order
frame on $U$.
Moreover, by the change of $\lambda^\alpha_0$ in \eqref{cambiooo}
we get
$\widetilde{\lambda}^\alpha_0(x,0)=\lambda^\alpha_0(x,0)=\lambda^\alpha(x)$.
The last two steps involve the coefficients $p^\alpha$ defined by
$\phi^0_\alpha(t)=p^\alpha\phi^1_0(t)$, and on $\phi^0_0(t)$.
Since the curve is $1$-generic, up to choosing $\eps$ small enough
we can assume that every curve of the variation is $1$-generic.
The procedure is then identical to the one in Section
\ref{sez_frenet}, and arguing as above it is easy to find the
desired special third order frame globally defined on $U$. This
concludes the proof.
\end{proof}
\vspace{1cm}
\textbf{Acknowledgement:} the authors express their gratitude to
professor L. Pizzocchero for his kind helpfulness and valuable
suggestions.
\bibliographystyle{amsplain}
\bibliography{BiblioConformecurve}

\providecommand{\bysame}{\leavevmode\hbox to3em{\hrulefill}\thinspace}
\providecommand{\MR}{\relax\ifhmode\unskip\space\fi MR }
\providecommand{\MRhref}[2]{%
  \href{http://www.ams.org/mathscinet-getitem?mr=#1}{#2}
}
\providecommand{\href}[2]{#2}
\begin{thebibliography}{10}

\bibitem{romerofuster}
A.~Montesinos Amilibia, M.C.~Romero Fuster, and E.~Sanabria Codesal,
  \emph{Conformal curvatures of curves in $\mathds{R}^{n+1}$}, Indag. Mathem.
  (N.S.) \textbf{12 no.3} (2001), 369--382.

\bibitem{arnold}
V.~Arnold, A.~Varchenko, and S.~Goussein-Zad\'e, \emph{Singularities of
  differentiable maps}, vol.~1, Birk\"auser, 1985.

\bibitem{banwhite}
T.~Banchoff and J.H. White, \emph{The behavior of the total twist and the
  self-linking number of a closed space curve under inversions}, Math.
  Scandinavica \textbf{36} (1975), 254--262.

\bibitem{bear}
A.F. Beardon, \emph{The {G}eometry of discrete {G}roups}, Graduate Texts in
  Mathematics, vol.~91, Springer-Verlag, 1983.

\bibitem{berger}
M.~Berger, \emph{{G}eometry {II}}, Springer-Verlag, 1987.

\bibitem{blaschke}
W.~Blaschke, \emph{Differentialgeometrie {III}}, Springer, 1929.

\bibitem{bushmanova}
G.V. Bushmanova and A.P. Norden, \emph{Elementy konformoi geometrii},
  Izdatelstvo Kazanskogo Universiteta, 1972.

\bibitem{CSWebb}
G.~Cairns, R.~Sharpe, and L.~Webb, \emph{Conformal invariants for curves and
  surfaces in three dimensional space forms}, Rocky Mountain J. Math.
  \textbf{24} (1994), 933--959.

\bibitem{cairnssharpe}
G.~Cairns and R.W. Sharpe, \emph{On the inversive differential geometry of
  plane curves}, L'Einsegn. Math. \textbf{36} (1990), 175--196.

\bibitem{cartan_rep}
E.~Cartan, \emph{La m{é}thode du rep{é}re mobile, la th{é}orie des {G}roupes
  continus et le espaces g{é}n{é}ralis{é}s}, Paris Hermann, 1935.

\bibitem{cartan_gr}
\bysame, \emph{La th{é}orie des {G}roupes finis et continus et la
  {G}{é}om{é}trie {D}iff{é}rentielle trait{é}s par la m{é}thode du rep{è}re
  mobile}, Paris Gauthier Villars, 1937.

\bibitem{dubnovfom}
B.A Dubrovin, S.P. Novikov, and A.T. Fomenko, \emph{Modern {G}eometry, methods
  and applications part. {I}}, Springer-Verlag, 1984.

\bibitem{eisenhart}
L.P. Eisenhart, \emph{Riemannian {G}eometry}, Princeton University Press, 1966.

\bibitem{fencheljacobi}
W.~Fenchel, \emph{Uber einen {J}acobischen satz der {K}urventheorie}, Tokoku
  Math. J. \textbf{39} (1934), 95--97.

\bibitem{gri}
P.A. Griffiths, \emph{Exterior differential systems and the {C}alculus of
  {V}ariations}, Birkhauser, 1983.

\bibitem{hert}
U.~Hertrich-Jeromin, \emph{Introduction to {M}\"obius {D}ifferential
  {G}eometry}, London Mathematical Society Lecture Note Series, vol. 300,
  Cambridge University Press, 2003.

\bibitem{jensen}
G.~R. Jensen, \emph{Higher order contact of submanifolds of homogeneous
  spaces}, Lecture notes in {M}athematics, vol. 610, Berlin Springer-Verlag,
  1977.

\bibitem{lawden}
D.F. Lawden, \emph{Elliptic {F}unctions and {A}pplications}, Appl. Math.
  Sciences, vol.~80, Springer-Verlag, 1989.

\bibitem{liebmann}
H.~Liebmann, \emph{Beitr{ä}ge zur {I}nversionsgeometrie der {K}urven},
  M{ü}nchener Berichte (1923), 79--94.

\bibitem{musso}
E.~Musso, \emph{The conformal arclength functional}, Math. Nachr. \textbf{165}
  (1994), 107--131.

\bibitem{osserman}
R.~Osserman, \emph{The four or more vertex theorem}, Amer. Math. Monthly
  \textbf{92} (1985), 332--337.

\bibitem{palais}
R.S. Palais, \emph{A global formulation of the {L}ie theory of {T}ransformation
  {G}roups}, Mem. Amer. Mat. Soc. \textbf{22} (1957).

\bibitem{pinkall}
U.~Pinkall, \emph{On the four-vertex theorem}, Aequationes \textbf{34} (1987),
  221--230.

\bibitem{rogen1}
P.~R{\o}gen, \emph{Gauss-{B}onnet's {T}heorem and {C}losed {F}renet {F}rames},
  Geometriae Dedicata \textbf{73} (1998), 295--315.

\bibitem{saban}
G.~Saban, \emph{Nuove caratterizzazioni della sfera}, Atti Accad. Naz. Lincei.
  Rend. Cl. Sci. Fis. Mat. Nat. \textbf{25} (1958), 457--464.

\bibitem{scherrer}
W.~Scherrer, \emph{Eine {K}ennzeichnung der {K}ugel. (german)}, Vierteljschr.
  Naturforsch. Ges. Z\"{u}rich \textbf{85} (1940, Beiblatt 32), 40--46.

\bibitem{ScSulanke}
C.~Schiemangk and R.~Sulanke, \emph{Submanifolds of the {M}{ö}bius space},
  Math. Nachr. \textbf{96} (1980), 165--183.

\bibitem{sharpe}
R.W. Sharpe, \emph{Differential {G}eometry. {C}artan's generalization of
  {K}lein's {E}rlangen {P}rogram}, Graduate Texts in Mathematics, vol. 166,
  Springer-Verlag, 1997.

\bibitem{sul}
R.~Sulanke, \emph{Submanifolds fo the {M}\"obius space {II}. {F}renet
  {F}ormulas and {C}urves of {C}onstant {C}urvatures}, Math. Nachr.
  \textbf{100} (1981), 235--247.

\bibitem{sul3}
\bysame, \emph{Submanifolds of the {M}\"obius space. {III}. the analogue of
  {O}. {B}onnet's {T}heorem for hypersurfaces},  \textbf{38} (1982), 311--317.

\bibitem{sulsvec}
R.~Sulanke and A.~\v{S}vec, \emph{{Z}ur {D}ifferentialgeometrie der
  {U}ntermannigfaltigkeiten eines {K}leinschen {R}aumes. (german) [{O}n the
  differential geometry of submanifolds of a {K}lein space]}, Beitr\"{a}ge
  Algebra Geom \textbf{10} (1980), 63--85.

\bibitem{sulwint}
R.~Sulanke and P.~Wintgen, \emph{{D}ifferentialgeometrie und
  {F}aserb\"{u}ndel}, (German) Lehrb\"{u}cher und Monographien aus dem Gebiete
  der exakten Wissenschaften, Mathematische Reihe, Band 48, Birkh\"{a}user
  Verlag, 1972.

\bibitem{takasu}
T.~Takasu, \emph{Differentialgeometrie in den kugelr\"{a}umen, {I}}, Konforme
  Differentialgeometrie von Liouville und M\"obius, Tokio, 1938.

\bibitem{verbitskii}
L.L. Verbitskii, \emph{Fundamental of curve theory in a conformal space
  $({M}_n)$ of $n$ dimensions ({R}ussian)}, Izv. Vys\v{s}. U\v{c}ebn. Zaved.
  Mat. \textbf{6 (13)} (1959), 26--37.

\bibitem{vessiot}
E.~Vessiot, \emph{Contribution {à} la g{é}om{é}trie conforme. {E}nveloppes des
  sph{è}res et courbes gauche}, J. {É}cole Polytechnique \textbf{25} (1925),
  43--91.

\bibitem{vranceanu}
G.~Vranceanu, \emph{Leçons de géométrie différentielle}, (French) 2 Vols.,
  Éditions de l' Académie de la République Populaire Roumaine, 1957.

\bibitem{white1}
J.H. White, \emph{Self {L}inking and the {G}auss {I}ntegral in {H}igher
  {D}imensions}, Amer.J.Math. \textbf{91 no.3} (1969), 693--728.

\end{thebibliography}

\end{document}